\tikzset{>={Latex[width=2.2mm,length=2.5mm]}}
\def\@rst #1 #2other{#1}
\newcommand\MR[1]{\relax\ifhmode\unskip\spacefactor3000 \space\fi
  \MRhref{\expandafter\@rst #1 other}{#1}}
\newcommand{\MRhref}[2]{\href{http://www.ams.org/mathscinet-getitem?mr=#1}{MR#1}}
\newcommand{\sref}[1]{\hyperref[#1]{\S~\ref*{#1}}}
\newcommand{\aref}[1]{\hyperref[#1]{Appendix~\ref*{#1}}}
\newcommand{\lref}[1]{\hyperref[#1]{Lemma~\ref*{#1}}}
\newcommand{\tref}[1]{\hyperref[#1]{Theorem~\ref*{#1}}}
\newcommand{\cref}[1]{\hyperref[#1]{Corollary~\ref*{#1}}}
\newcommand{\fref}[1]{\hyperref[#1]{Figure~\ref*{#1}}}
\newcommand{\pref}[1]{\hyperref[#1]{Proposition~\ref*{#1}}}
\def\clap#1{\hbox to 0pt{\hss#1\hss}}
\newcommand{\old}[1]{}
\newif\if@borderstar
   \def\bordermatrix{\@ifnextchar*{%
       \@borderstartrue\@bordermatrix@i}{\@borderstarfalse\@bordermatrix@i*}%
   }
   \def\@bordermatrix@i*{\@ifnextchar[{\@bordermatrix@ii}{\@bordermatrix@ii[()]}}
   \def\@bordermatrix@ii[#1]#2{%
   \begingroup
     \m@th\@tempdima8.75\p@\setbox\z@\vbox{%
       \def\cr{\crcr\noalign{\kern 2\p@\global\let\cr\endline }}%
       \ialign {$##$\hfil\kern 2\p@\kern\@tempdima & \thinspace %
       \hfil $##$\hfil && \quad\hfil $##$\hfil\crcr\omit\strut %
       \hfil\crcr\noalign{\kern -\baselineskip}#2\crcr\omit %
       \strut\cr}}%
     \setbox\tw@\vbox{\unvcopy\z@\global\setbox\@ne\lastbox}%
     \setbox\tw@\hbox{\unhbox\@ne\unskip\global\setbox\@ne\lastbox}%
     \setbox\tw@\hbox{%
       $\kern\wd\@ne\kern -\@tempdima\left\@firstoftwo#1%
         \if@borderstar\kern2pt\else\kern -\wd\@ne\fi%
       \global\setbox\@ne\vbox{\box\@ne\if@borderstar\else\kern 2\p@\fi}%
       \vcenter{\if@borderstar\else\kern -\ht\@ne\fi%
         \unvbox\z@\kern-\if@borderstar2\fi\baselineskip}%
         \if@borderstar\kern-2\@tempdima\kern2\p@\else\,\fi\right\@secondoftwo#1 $%
     }\null \;\vbox{\kern\ht\@ne\box\tw@}%
   \endgroup
   }
\newtheorem{thm}{Theorem}
\numberwithin{figure}{section}
\numberwithin{thm}{section}
\numberwithin{equation}{section}
\newtheorem{theorem}[thm]{Theorem}
\newtheorem{lemma}[thm]{Lemma}
\newtheorem{corollary}[thm]{Corollary}
\theoremstyle{definition}\newtheorem{remark}[thm]{Remark}
\theoremstyle{definition}
\newcommand{\Z}{\mathbb{Z}}
\newcommand{\Q}{\mathbb{Q}}
\newcommand{\R}{\mathbb{R}}
\renewcommand{\C}{\mathbb{C}}
\renewcommand{\G}{\mathcal{G}}
\renewcommand{\H}{\mathcal{H}}
\renewcommand{\t}{{\text t}}
\newcommand{\be}{\begin{equation}}
\newcommand{\ee}{\end{equation}}
\newcommand{\sign}{\operatorname{sign}}
\newcommand{\restrictboxwidth}[2]{%
  \begingroup
    \sbox0{\ignorespaces#1\unskip}%
    \leavevmode
    \ifdim\wd0>#2
      \hbox to #2{%
        \hss\resizebox{#2}{!}{\copy0 }\hss
      }%
    \else
      \copy0 %
    \fi
  \endgroup
}
\newcommand{\rsbox}[1]{\restrictboxwidth{$#1$}{24pt}}
\newcommand{\rnode}[2]{\node at (axis cs:#1) {\rsbox{#2}};}
\title{\vspace{-40pt}The Green's function on the double cover of the grid \\ and application to the uniform spanning tree trunk}
\author{\href{http://www.math.brown.edu/~rkenyon/}{Richard W. Kenyon}\thanks{Brown University, Providence, RI 02912, USA; richard\_kenyon@brown.edu.} \and \href{http://dbwilson.com}{David B. Wilson}\thanks{University of Washington, Seattle, WA 98195, USA; dbwilson@uw.edu.}}
\date{}
\begin{document}

\maketitle
\vspace{-16pt}
\begin{abstract}
  We compute the Green's function on the double cover of $\Z^2$, branched over a vertex or a face.
  We use this result to compute the local statistics of the ``trunk'' of the
  uniform spanning tree on the square lattice, i.e., the limiting
  probabilities of cylinder events conditional on the path connecting
  far away points passing through a specified edge.  We also show how
  to compute the local statistics of large-scale triple points of the
  uniform spanning tree, where the trunk branches.  The method reduces
  the problem to a dimer system with isolated monomers, and we compute
  the inverse Kasteleyn matrix using the Green's function on the double
  cover of the square lattice.  For the trunk, the probabilities of
  cylinder events are in $\Q[\sqrt{2}]$, while for the triple points
  the probabilities are in $\Q[1/\pi]$.
\end{abstract}
\vspace{-36pt}
\renewcommand{\contentsname}{\empty}
\tableofcontents

\section{Introduction}

The Green's function, or inverse Laplacian,
on a graph is of fundamental importance in the study of
random walks, potential theory, and statistical mechanics models on graphs.
In its simplest probabilistic interpretation, the Green's function with
Dirichlet boundary conditions $G(v_1,v_2)$ is the expected
time spent at a vertex $v_2$ before hitting the boundary, of a simple random walk started at $v_1$ run in continuous time.

The Green's function on $\Z^2$ is of particular interest
because that is a natural setting of simple random walk (SRW) and other statistical
mechanics models where the Laplacian plays a role, like the uniform spanning tree (UST),
the dimer model, and the Ising model.

We study here the Green's function and a closely related operator, the inverse
Kasteleyn matrix, on the double cover of~$\Z^2$ branched over a face or a vertex.
These operators are important in a number of different situations:

\begin{enumerate}
\item In properties of the loop-erased random walk, or the uniform spanning tree ``centered on the trunk''.
\item In dimer covers of $\Z^2$ with holes at specified locations (see \cite{Fisher-Stephenson}).
\item Simple random walk (SRW) on $\Z^2$ with boundary consisting of a ray $\{(x,0)\;:\;x\le 0\}$ (see \cite{BMS1, BMS2})
\item SRW on $\Z^2$ with boundary consisting of a diagonal ray $\{(x,x)\;:\;|x|\le 0\}$.
\item In the study of how the SRW winds around a face of $\Z^2$ \cite{Budd}.
\item In the critical Ising model on $\Z^2$ with a ``disorder insertion'' (see \cite{CCK:ising}).
\end{enumerate}

We compute an exact expression for the Green's functions
for the double branched covers of~$\Z^2$, branched over a vertex or a face.
As shown below, these are closely related to the Green's function for
SRW on $\Z^2$ with boundary consisting of a ray $\{(x,0)\;:\;x\le 0\}$ or the diagonal ray $\{(x,x)\;:\;|x|\le 0\}$.

As an application, we show that the spanning tree edge probabilities near the
two-ended loop-erased random walk on
$\Z^2$ (the trunk of the uniform spanning tree) form a \textit{determinantal process}, with an explicit kernel.
The local edge probabilities are shown in Figure~\ref{neartrunkprobs}.
For example we show that the probability that a vertex on the trunk has degree $2$, $3$ or $4$ is respectively
$\frac12$, $\sqrt{2}-1$ and $\frac32-\sqrt{2}$.
We also prove a surprising ``geometric runs'' property: the probability that the trunk goes straight at least $k$ times is $(\sqrt{2}-1)^k$.
This is a finite (in~$k$) version of an earlier asymptotic (in $k$) result \cite{kenyon:det-lap} (also proved in \cite[ex.~5.5]{MR2030095}).

\begin{figure}[t]
\centerline{\begin{tikzpicture}[
  scale=3.3,
  ->,
  bend angle=20,
  shorten >=1pt,
  auto,
  node distance=2cm,
  thick,
  every state/.style={fill=red,draw=none,text=white},
]
\newcommand{\edgeR}[2]{(#1) edge [bend left] node [scale=0.9,inner sep=0.5pt] {$#2$} ++(+1,0)}
\newcommand{\edgeL}[2]{(#1) edge [bend left] node [scale=0.9,inner sep=0.5pt, anchor=north] {$#2$} ++(-1,0)}
\newcommand{\edgeU}[2]{(#1) edge [bend left] node [scale=0.9,inner sep=0.5pt] {$#2$} ++(0,+1)}
\newcommand{\edgeD}[2]{(#1) edge [bend left] node [scale=0.9,inner sep=0.5pt] {$#2$} ++(0,-1)}
\draw [gray!30,thick,-,line width=10pt,line cap=round] (-1,0) -- (0,0);
\node at (-0.5,0) {trunk edge};
\path[every node/.style={sloped,anchor=south,auto=false}]
\edgeL{0,1}{\frac{1}{\sqrt{2}}-\frac{1}{2}}
\edgeL{0,2}{\frac{53}{2 \sqrt{2}}-\frac{37}{2}}
\edgeL{0,3}{\frac{2291}{4 \sqrt{2}}-\frac{1619}{4}}
\edgeL{1,0}{3-2 \sqrt{2}}
\edgeL{1,1}{\frac{1}{\sqrt{2}}-\frac{1}{2}}
\edgeL{1,2}{\frac{1}{2 \sqrt{2}}-\frac{1}{8}}
\edgeL{1,3}{\frac{789}{8 \sqrt{2}}-\frac{139}{2}}
\edgeL{2,0}{49-\frac{69}{\sqrt{2}}}
\edgeL{2,1}{\frac{47}{8}-4 \sqrt{2}}
\edgeL{2,2}{\frac{1}{2 \sqrt{2}}-\frac{1}{8}}
\edgeL{2,3}{\frac{7}{8 \sqrt{2}}-\frac{49}{128}}
\edgeL{3,0}{\frac{4071}{4}-\frac{1439}{\sqrt{2}}}
\edgeL{3,1}{\frac{1279}{4}-\frac{3615}{8 \sqrt{2}}}
\edgeL{3,2}{\frac{1523}{128}-\frac{33}{2 \sqrt{2}}}
\edgeL{3,3}{\frac{7}{8 \sqrt{2}}-\frac{49}{128}}
\edgeR{-1,1}{\frac{1}{\sqrt{2}}-\frac{1}{2}}
\edgeR{-1,2}{\frac{53}{2 \sqrt{2}}-\frac{37}{2}}
\edgeR{-1,3}{\frac{2291}{4 \sqrt{2}}-\frac{1619}{4}}
\edgeR{0,0}{\sqrt{2}-1}
\edgeR{0,1}{1-\frac{1}{\sqrt{2}}}
\edgeR{0,2}{\frac{11}{2 \sqrt{2}}-\frac{29}{8}}
\edgeR{0,3}{\frac{1101}{4 \sqrt{2}}-\frac{1555}{8}}
\edgeR{1,0}{8 \sqrt{2}-11}
\edgeR{1,1}{\sqrt{2}-\frac{9}{8}}
\edgeR{1,2}{\frac{5}{8}-\frac{1}{2 \sqrt{2}}}
\edgeR{1,3}{\frac{107}{8 \sqrt{2}}-\frac{1177}{128}}
\edgeR{2,0}{\frac{587}{2 \sqrt{2}}-\frac{829}{4}}
\edgeR{2,1}{39 \sqrt{2}-\frac{439}{8}}
\edgeR{2,2}{\frac{9}{4 \sqrt{2}}-\frac{169}{128}}
\edgeR{2,3}{\frac{113}{128}-\frac{7}{8 \sqrt{2}}}
\edgeU{-1,0}{1-\frac{1}{\sqrt{2}}}
\edgeU{-1,1}{1-\frac{1}{\sqrt{2}}}
\edgeU{-1,2}{\frac{237}{8}-\frac{83}{2 \sqrt{2}}}
\edgeU{0,0}{1-\frac{1}{\sqrt{2}}}
\edgeU{0,1}{1-\frac{1}{\sqrt{2}}}
\edgeU{0,2}{\frac{237}{8}-\frac{83}{2 \sqrt{2}}}
\edgeU{1,0}{\frac{9}{2}-3 \sqrt{2}}
\edgeU{1,1}{\frac{5}{8}-\frac{1}{2 \sqrt{2}}}
\edgeU{1,2}{\frac{5}{8}-\frac{1}{2 \sqrt{2}}}
\edgeU{2,0}{\frac{637}{8}-\frac{449}{4 \sqrt{2}}}
\edgeU{2,1}{\frac{53}{8}-\frac{9}{\sqrt{2}}}
\edgeU{2,2}{\frac{113}{128}-\frac{7}{8 \sqrt{2}}}
\edgeU{3,0}{\frac{3821}{2}-\frac{5403}{2 \sqrt{2}}}
\edgeU{3,1}{\frac{51233}{128}-\frac{9051}{16 \sqrt{2}}}
\edgeU{3,2}{\frac{1685}{128}-\frac{73}{4 \sqrt{2}}}
\edgeD{-1,1}{\frac{1}{\sqrt{2}}-\frac{1}{2}}
\edgeD{-1,2}{\frac{19}{2 \sqrt{2}}-\frac{13}{2}}
\edgeD{-1,3}{\frac{859}{4 \sqrt{2}}-\frac{1213}{8}}
\edgeD{0,1}{\frac{1}{\sqrt{2}}-\frac{1}{2}}
\edgeD{0,2}{\frac{19}{2 \sqrt{2}}-\frac{13}{2}}
\edgeD{0,3}{\frac{859}{4 \sqrt{2}}-\frac{1213}{8}}
\edgeD{1,1}{2-\frac{5}{2 \sqrt{2}}}
\edgeD{1,2}{\frac{1}{2 \sqrt{2}}-\frac{1}{8}}
\edgeD{1,3}{\frac{91}{8 \sqrt{2}}-\frac{125}{16}}
\edgeD{2,1}{\frac{347}{8}-\frac{61}{\sqrt{2}}}
\edgeD{2,2}{\frac{25}{16}-\frac{15}{8 \sqrt{2}}}
\edgeD{2,3}{\frac{7}{8 \sqrt{2}}-\frac{49}{128}}
\edgeD{3,1}{\frac{19921}{16}-\frac{28167}{16 \sqrt{2}}}
\edgeD{3,2}{\frac{13811}{128}-\frac{609}{4 \sqrt{2}}}
\edgeD{3,3}{\frac{15}{8}-\frac{37}{16 \sqrt{2}}}
;
\end{tikzpicture}}
\caption{Uniform spanning tree edge probabilities near an edge conditioned to be on the trunk.
From each vertex there is a unique directed path to infinity that avoids the
edge conditioned to be on the trunk; here we give the directed edge probabilities.}
\label{neartrunkprobs}
\end{figure}

For the spanning tree trunk on the triangular lattice we also prove a
geometric runs property, where the probability that the trunk
continues straight $k$ times is $(2-\sqrt{3})^k$; however on this
lattice, other local statistics such as degree probabilities seem to
require more ideas to compute.

Lawler \cite{lawler:trunk-measure} also constructs the measure near the trunk on $\Z^2$ (and $\Z^3$), but without giving
explicit values, or showing that it is determinantal for $\Z^2$.

As another application we compute the dimer edge probabilities for the
dimer model on~$\Z^2$ with a fixed monomer at the origin (with
``flat'' boundary conditions).  Again these dimer probabilities are
a determinantal process with an explicit kernel.  See Figure~\ref{nearholeprobs}.

A curious consequence of the calculation is that the edge probabilities of the above types (and indeed the relevant Green's function) take values in $\Q[\sqrt{2}]$;
recall that the values of the Green's function on $\Z^2$ are in $\Q\oplus\frac1\pi\Q$.

In the case that the origin is a triple point of the UST (there are
three disjoint branches from the origin to $\infty$), the edge process
is again determinantal; we compute its kernel explicitly.  (This tripod
computation uses the usual Green's function rather than the branched
double cover Green's function.)

\begin{figure}[t!]
\centerline{\includegraphics[width=5in]{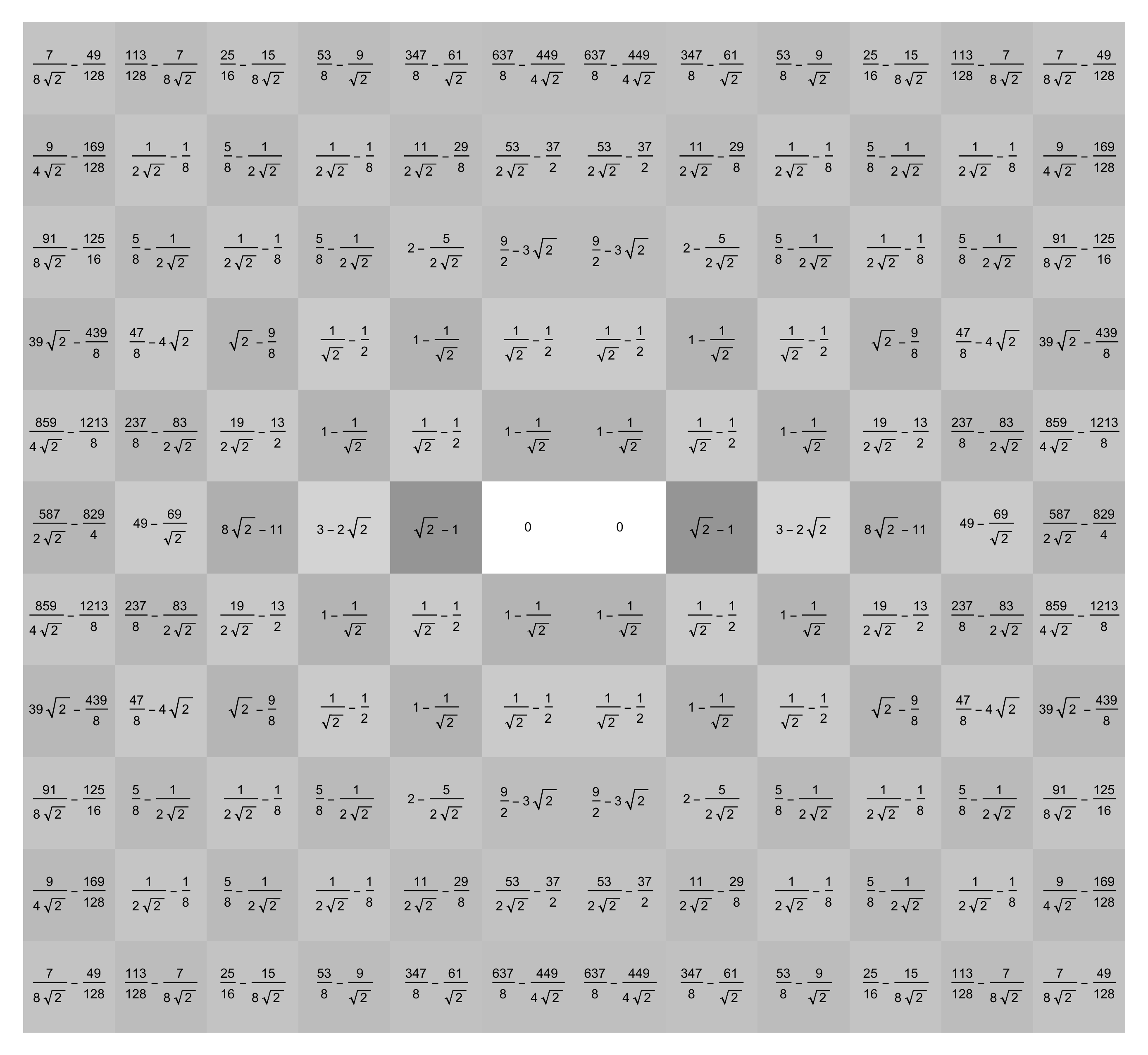}}
\caption{Horizontal dimer probabilities near the monomer, with the exact values written, and shading proportional to these values.}
\label{nearholeprobs}
\end{figure}

In Section~\ref{background} we give background information about the Laplacian, the Green's function, their
electrical interpretations, and their relation with spanning trees. In  Section~\ref{GDsection}
we compute the Green's function on the \emph{slit plane}, that is with Dirichlet boundary conditions on the negative diagonal.
This Green's function is related to the Green's function on the double branched cover of the plane in
Section~\ref{branchedcoversection}. In Section~\ref{holesection} we discuss the Kasteleyn matrix on
regions with a hole (a monomer) at the origin, and how this relates to the UST trunk measure.
In Section~\ref{trunksection} we discuss several properties of the UST trunk measure.
In Section~\ref{triplesection} we prove that, conditional on the origin being a triple point of the whole-plane UST,
the conditioned UST is again determinantal, and we give an explicit kernel.
Our results are extended to the triangular lattice in Section~\ref{triangularsection}.
\medskip

\noindent{\bf Acknowledgements:}
The work of the first author was supported by the NSF grants DMS-1713033 and the Simons foundation grant 327929.
The work of the second author was begun while at Microsoft Research.

\section{Background}\label{background}

\subsection{Laplacian and Green's function}

The Laplacian on a graph $\G$ is the operator $\Delta:\R^\G\to\R^\G$ defined by
\[\Delta f(v) = \sum_{v'\sim v} f(v)-f(v').\]
If $B$ is a subset of vertices of $\G$, called boundary vertices,
we define the \emph{Laplacian with Dirichlet boundary conditions on $B$},
defined on functions which are zero on $B$, by the same formula
but restricted to $v\in\G\setminus B$ (but $v'$ still varies over all neighbors of $v$ in $\G$).
This is an operator from $\R^{\G\setminus B}$ to itself.

For a finite connected graph, if $B$ is nonempty, $\Delta$ is invertible
and we define $G$, the Green's function with Dirichlet boundary conditions at $B$,
to be its inverse.

If $B$ is empty, $\Delta$ has a kernel consisting of the constant functions.
In this case we can define $G$ on the orthocomplement of the constant functions (functions whose sum is zero),
and $G$ is only defined up to an additive constant. Often in this case this constant
is chosen as a function of the first vertex so that $G(v,v)=0$.

\subsection{Electrical interpretation}

When $B$ is nonempty, the Green's function $G(u,v)$ with Dirichlet boundary conditions at $B$ has the following
interpretation in terms of a resistor network.
Consider $\G$ to be a resistor network with a unit resistance on each edge. Boundary
vertices are held at potential $0$.
Then $G(u,v)$ is the potential at $v$ when one unit of current enters the circuit at $u$.
Equivalently, hold the vertex $u$ at potential $f(u)$, where $f(u)$ is the effective resistance between $u$ and $B$.
Then one unit of current will enter the circuit at $u$ and leave through $B$;
the potential values at the vertices~$v$ (including $v=u$) are $G(u,v)$.
The current exiting the circuit at a boundary vertex $b\in B$ has the interpretation as the probability that a random walk started at $u$ will
first reach $B$ at $b$.
For a continuous-time random walk started at $u$, the voltage $G(u,v)$ equals the expected time that the walk spends at $v$ before reaching $B$.

\subsection{Planar graphs and duals}

If a graph $\G$ is planar with Dirichlet boundary on its outer face, then associated to the Green's function $G(u,v)$
is a \emph{dual Green's function\/} $G^*$ on the dual graph. It is the conjugate harmonic function for $G(u,v)$,
that is, if $e=vv'$ is an edge of $\G$ then
\[G^*(f_0,f)-G^*(f_0,f') = G(u,v)-G(u,v')\] where $f,f'$ are the faces left and right of $e$ when traversed from $v$ to $v'$.
Note however that $G^*(f_0,f)$ as a function of $f$ is multivalued around $u$, that is, on a path winding counterclockwise around $u$ it increases by $1$.

In the electrical interpretation, $G^*(f_0,f)-G^*(f_0,f')$ is the signed amount of current flowing between faces $f$ and $f'$.
We sometimes refer to $G^*$ as the \emph{dual voltage}.

\subsection{Green's function on \texorpdfstring{$\Z^2$}{Z\texttwosuperior}}

On $\Z^2$ we define the Green's function as a limit of the Green's function $G_n$ on
the graphs $\G_n = \Z^2\cap[-n,n]\times[-n,n]$ with Dirichlet boundary conditions
as follows.
\be\label{Greenlimitdef}G(v,v') = \lim_{n\to\infty} [G_n(v,v')-G_n(v,v)].\ee
$G_n(v,v')$ itself diverges as $n\to\infty$ since random walk on $\Z^2$ is recurrent.
See \cite{Spitzer}.  Since $G(v,v')<0$ it is sometimes convenient to use its negative which is known as the \textit{potential kernel}.

The Green's function $G((x_1,y_1),(x_2,y_2))$ on $\Z^2$
only depends on $(x_2,y_2)-(x_1,y_1)=(x,y)$ and has the formula
\be\label{Greendoubleint}
G((x,y)):=G((0,0),(x,y))=\frac1{(2\pi)^2}\oint\!\!\!\oint \frac{z^x w^y-1}{4-z-1/z-w-1/w}
\frac{dz}{iz}\frac{dw}{iw}.
\ee

Values of $G$ can be extracted by contour integration.
The diagonal values are $-G((x,x)) = \frac{1}{\pi} \sum_{k=1}^x 1/k$ for $x\geq 0$,
and the remaining values $G((x,y))$ can be deduced by symmetry and harmonicity \cite{mccrea-whipple};
see Figure~\ref{Green}.

\begin{figure}[htbp]
\begin{equation*}
\begin{tikzpicture}[scale=1.2,baseline=0.0cm-2.5pt]
\newcommand{\vnode}[2]{\node at (#1) {$\scriptstyle #2$};}
\foreach \y in {-3,...,3} {\draw[dotted] (-3.4,\y)--(3.4,\y);}
\foreach \x in {-3,...,3} {\draw[dotted] (\x,-3.4)--(\x,3.4);}
\vnode{-3,-3}{\frac{23}{15\pi}}
\vnode{-3,-2}{\frac{1}{4}{+}\frac{2}{3\pi}}
\vnode{-3,-1}{\frac{23}{3\pi}{-}2}
\vnode{-3,0}{\frac{17}{4}{-}\frac{12}{\pi}}
\vnode{-3,1}{\frac{23}{3\pi}{-}2}
\vnode{-3,2}{\frac{1}{4}{+}\frac{2}{3\pi}}
\vnode{-3,3}{\frac{23}{15\pi}}
\vnode{-2,-3}{\frac{1}{4}{+}\frac{2}{3\pi}}
\vnode{-2,-2}{\frac{4}{3\pi}}
\vnode{-2,-1}{\frac{2}{\pi}{-}\frac{1}{4}}
\vnode{-2,0}{1{-}\frac{2}{\pi}}
\vnode{-2,1}{\frac{2}{\pi}{-}\frac{1}{4}}
\vnode{-2,2}{\frac{4}{3\pi}}
\vnode{-2,3}{\frac{1}{4}{+}\frac{2}{3\pi}}
\vnode{-1,-3}{\frac{23}{3\pi}{-}2}
\vnode{-1,-2}{\frac{2}{\pi}{-}\frac{1}{4}}
\vnode{-1,-1}{\frac{1}{\pi}}
\vnode{-1,0}{\frac{1}{4}}
\vnode{-1,1}{\frac{1}{\pi}}
\vnode{-1,2}{\frac{2}{\pi}{-}\frac{1}{4}}
\vnode{-1,3}{\frac{23}{3\pi}{-}2}
\vnode{0,-3}{\frac{17}{4}{-}\frac{12}{\pi}}
\vnode{0,-2}{1{-}\frac{2}{\pi}}
\vnode{0,-1}{\frac{1}{4}}
\vnode{0,0}{0}
\vnode{0,1}{\frac{1}{4}}
\vnode{0,2}{1{-}\frac{2}{\pi}}
\vnode{0,3}{\frac{17}{4}{-}\frac{12}{\pi}}
\vnode{1,-3}{\frac{23}{3\pi}{-}2}
\vnode{1,-2}{\frac{2}{\pi}{-}\frac{1}{4}}
\vnode{1,-1}{\frac{1}{\pi}}
\vnode{1,0}{\frac{1}{4}}
\vnode{1,1}{\frac{1}{\pi}}
\vnode{1,2}{\frac{2}{\pi}{-}\frac{1}{4}}
\vnode{1,3}{\frac{23}{3\pi}{-}2}
\vnode{2,-3}{\frac{1}{4}{+}\frac{2}{3\pi}}
\vnode{2,-2}{\frac{4}{3\pi}}
\vnode{2,-1}{\frac{2}{\pi}{-}\frac{1}{4}}
\vnode{2,0}{1{-}\frac{2}{\pi}}
\vnode{2,1}{\frac{2}{\pi}{-}\frac{1}{4}}
\vnode{2,2}{\frac{4}{3\pi}}
\vnode{2,3}{\frac{1}{4}{+}\frac{2}{3\pi}}
\vnode{3,-3}{\frac{23}{15\pi}}
\vnode{3,-2}{\frac{1}{4}{+}\frac{2}{3\pi}}
\vnode{3,-1}{\frac{23}{3\pi}{-}2}
\vnode{3,0}{\frac{17}{4}{-}\frac{12}{\pi}}
\vnode{3,1}{\frac{23}{3\pi}{-}2}
\vnode{3,2}{\frac{1}{4}{+}\frac{2}{3\pi}}
\vnode{3,3}{\frac{23}{15\pi}}
\end{tikzpicture}
\end{equation*}
\caption{The potential kernel $A((x,y))=-G((x,y))$ of the square lattice.}\label{Green}
\end{figure}

\subsection{Uniform spanning trees}

The Dirichlet Laplacian and Greens functions on a graph $\G$ with boundary $B$ are closely related to uniformly random spanning trees.
$\det \Delta$ counts the number of spanning trees, where the boundary is contracted to a single vertex.  For two edges $vw$ and $xy$,
the \textit{transfer impedance\/} between them is defined to be $T_{vw,xy}=G(v,x)-G(v,y)-G(w,x)+G(w,y)$.  The probability that $k$ edges $e_1,\dots,e_k$
are in a uniform spanning tree is given by the $k\times k$ determinant
\begin{equation} \label{eq:determinantal}
\det \big[T_{e_i,e_j}\big]_{i=1,\dots,k}^{j=1,\dots,k}
\end{equation}
\cite{BP}, so the edge process of the UST is said to be \textit{determinantal\/} with kernel $T$.
There are other natural determinantal processes;
more generally a measure $\mu$ on subsets of a set $S$ is said to be determinantal with kernel $T$
if for any finite collection of items $e_1,\dots,e_k\in S$,
the event that $e_1,\dots,e_k$ are contained in a $\mu$-random subset occurs with probability given by \eqref{eq:determinantal}.

The uniform spanning tree is also closely related to random walks.  The UST path connecting two vertices $u$ and $v$ is distributed according to a \textit{loop-erased random walk\/} (LERW) from $u$ to $v$ \cite{Pemantle}.  Loop-erased random walk from $u$ to $v$ is a process that was first studied by Lawler, and is formed from simple random walk from $u$ to $v$ by erasing loops as they are formed (see \cite{Lawler-Limic:book}).

Pemantle constructed the uniform spanning tree on $\Z^{d}$ by taking the UST on large
boxes and showing that the finite-graph UST measures converge \cite{Pemantle}.
This limiting measure is supported on spanning trees of $\Z^d$ precisely when $d\leq 4$;
it is supported on spanning forests (graphs with no cycles) with infinitely many trees when $d>4$ \cite{Pemantle}.
Almost surely every vertex of the UST has a unique path to $\infty$ when $d>1$ \cite{BLPS}.
The local statistics of the UST on $\Z^d$ can be computed from the Green's function on $\Z^d$.

For the UST on an $n\times n$ box of $\Z^2$, a path within the tree that starts at a vertex $v$ and travels distance order $n$ from $v$ is called an arm at $v$.
It is easy to see that there are vertices with 2 or 3 arms.  Almost always (with probability tending to $1$ as $n\to\infty$)
there are no vertices with 4 arms \cite{Schramm:SLE}.

\section{Green's function on the slit plane}\label{GDsection}

\begin{figure}[t]
\centerline{\begin{tikzpicture}[scale=0.75]
\foreach \y in {-4,...,4} \draw [gray!30!white,thick] (-4.7,\y) -- (4.7,\y);
\foreach \x in {-4,...,4} \draw [gray!30!white,thick] (\x,-4.7) -- (\x,4.7);
\draw [red,thick] (-0.5,-0.5) -- (-0.5,-1);
\foreach \x in {-3,...,-1} {
  \draw [red,thick] (\x+0.5,\x) arc (0:-90:0.5) arc (90:180:0.5);
};
\draw [red,thick] (-3.5,-4) arc (0:-90:0.5) arc (90:145:0.5);
\draw (0,0) node [scale=0.8] {$(0,0)$};
\end{tikzpicture}}
\caption{The square lattice with a zipper (in red) along the dual path $\gamma$ just below the negative diagonal.  \label{zfig}}
\end{figure}
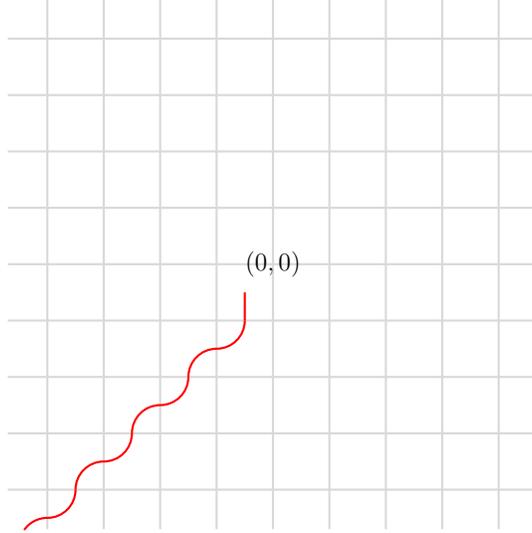
Let $D=\{(k,k)\;:\;k\le-1\}$.
We calculate the Green's function with Dirichlet boundary conditions $G_D$ on $\Z^2\setminus D$, which is zero on $D$.
The function $G_D$ is closely related to another function, the Green's function $G_Z$ on $\Z^2$ with a
\emph{zipper of minus signs\/} starting at the face centered at $(-\tfrac12,-\tfrac12)$, defined as follows.
On $\Z^2$ take a dual path $\gamma$ from $(-\tfrac12,-\tfrac12)$ to $\infty$ contained in the region $x>y$,
for example crossing the edges
with one vertex on $D$ and lying below $D$, see Figure~\ref{zfig}.
Change edge conductances
on the edges crossing $\gamma$ to $-1$. Let $\Delta_Z$ be the Laplacian for $\Z^2$ with these new conductances
and $G_Z$ its inverse (this inverse is essentially the
antisymmetric Green's function on the double cover, see Section~\ref{branchedcoversection} below).
Even though $G_Z((0,0),v)$ depends on the choice of $\gamma$, moving $\gamma$ past a vertex $v$
has the effect of changing the sign of $G_Z((0,0),v)$, but no other values of $G_Z((0,0),\cdot)$ change.
In particular if we move $\gamma$ across all vertices of $D$, a symmetry argument shows that
$G_Z((0,0),v)=0$ when $v\in D$, and thus
we have $G_Z((0,0),v) = G_D((0,0),v)$ for all $v$.
(For a more general formula relating $G_Z$ and $G_D$, see equations~\eqref{eq:GAS-GZ} and \eqref{eq:GD0-GAS}.)

Since $G_D((0,0),v)$ is symmetric about the diagonal, it is convenient to work in the half plane $x\le y$.
We change coordinates, rotating by $-45^\circ$ and scaling so that vertices are at $(x,y)\in\Z^2$ with $x+y$ even and $y\geq0$, and dual vertices are at
$(x,y)\in\Z^2$ with $x+y$ odd and $y\geq 0$. It is also convenient to scale current by $2$, so that
\[G_H(x,y) = 2G_D\left((0,0),\big(\frac{x-y}2,\frac{x+y}2\big)\right) = 2G_Z\left((0,0),\big(\frac{x-y}2,\frac{x+y}2\big)\right).\]
The function $G_H$ is shown in Figure~\ref{half-z2};
in the rest of this section we give an algorithm for computing $G_H$ and a formula for its generating function.
Once this is done, we shall see how to compute the values $G_Z(u,v)$ (Corollarly~\ref{cor:GZ}) and $G_D(u,v)$ (equations~\eqref{eq:GAS-GZ} and \eqref{eq:GD0-GAS})
at arbitrary pairs of vertices in terms of $G_H$.

\begin{figure}[h!]
\centerline{\begin{tikzpicture}
\begin{axis}[xmin=-4.7,xmax=4.7,ymin=-.5,ymax=4.2,x={(1.5cm,0)}, y={(0,1.5cm)}, axis lines=none]
\foreach \j in {-9,...,9} {\edef\temp{\noexpand\addplot[domain=0:4.2,gray!30!white,thick]({\j-x},x);\noexpand\addplot[domain=0:4.2,gray!30!white,thick]({\j+x},x);}\temp} ;
\rnode{-1,0}{0}
\rnode{-1.5,0.5}{\frac{3}{2}{-}\sqrt{2}}
\rnode{-0.5,0.5}{\sqrt{2}{-}1}
\rnode{-2,0}{0}
\rnode{-1,1}{3 \sqrt{2}{-}4}
\rnode{0,0}{1}
\rnode{-2,1}{10{-}7 \sqrt{2}}
\rnode{0,1}{\sqrt{2}{-}1}
\rnode{-2.5,0.5}{\sqrt{2}{-}\frac{11}{8}}
\rnode{-1.5,1.5}{\frac{25}{\sqrt{2}}{-}\frac{35}{2}}
\rnode{-0.5,1.5}{1{-}\frac{1}{\sqrt{2}}}
\rnode{0.5,0.5}{2{-}\sqrt{2}}
\rnode{-3,0}{0}
\rnode{-1,2}{8{-}\frac{11}{\sqrt{2}}}
\rnode{1,0}{\frac{1}{2}}
\rnode{-2.5,1.5}{\frac{459}{8}{-}\frac{81}{\sqrt{2}}}
\rnode{0.5,1.5}{\frac{9}{\sqrt{2}}{-}6}
\rnode{-3,1}{11 \sqrt{2}{-}\frac{31}{2}}
\rnode{-2,2}{\frac{119}{\sqrt{2}}{-}84}
\rnode{0,2}{1{-}\frac{1}{\sqrt{2}}}
\rnode{1,1}{\frac{15}{2}{-}5 \sqrt{2}}
\rnode{-3.5,0.5}{\frac{23}{16}{-}\sqrt{2}}
\rnode{-1.5,2.5}{\frac{99}{2}{-}\frac{279}{4 \sqrt{2}}}
\rnode{-0.5,2.5}{\frac{7}{4 \sqrt{2}}{-}1}
\rnode{1.5,0.5}{\sqrt{2}{-}1}
\rnode{-3,2}{319{-}\frac{451}{\sqrt{2}}}
\rnode{1,2}{\frac{45}{\sqrt{2}}{-}\frac{63}{2}}
\rnode{-3.5,1.5}{\frac{169}{\sqrt{2}}{-}\frac{1911}{16}}
\rnode{-2.5,2.5}{\frac{2407}{4 \sqrt{2}}{-}\frac{3403}{8}}
\rnode{0.5,2.5}{10{-}\frac{55}{4 \sqrt{2}}}
\rnode{1.5,1.5}{35{-}\frac{49}{\sqrt{2}}}
\rnode{-4,0}{0}
\rnode{-1,3}{\frac{69}{4 \sqrt{2}}{-}12}
\rnode{2,0}{\frac{3}{8}}
\rnode{-4,1}{\frac{85}{4}{-}15 \sqrt{2}}
\rnode{-2,3}{286{-}\frac{1617}{4 \sqrt{2}}}
\rnode{0,3}{\frac{7}{4 \sqrt{2}}{-}1}
\rnode{2,1}{9 \sqrt{2}{-}\frac{99}{8}}
\rnode{-3.5,2.5}{\frac{28215}{16}{-}\frac{9975}{4 \sqrt{2}}}
\rnode{-4.5,0.5}{\sqrt{2}{-}\frac{179}{128}}
\rnode{-1.5,3.5}{\frac{1105}{8 \sqrt{2}}{-}\frac{195}{2}}
\rnode{-0.5,3.5}{1{-}\frac{9}{8 \sqrt{2}}}
\rnode{1.5,2.5}{\frac{935}{4 \sqrt{2}}{-}165}
\rnode{2.5,0.5}{\frac{7}{4}{-}\sqrt{2}}
\rnode{-4,2}{\frac{1135}{\sqrt{2}}{-}\frac{1605}{2}}
\rnode{-3,3}{\frac{12573}{4 \sqrt{2}}{-}\frac{4445}{2}}
\rnode{1,3}{\frac{143}{2}{-}\frac{403}{4 \sqrt{2}}}
\rnode{2,2}{\frac{1411}{8}{-}\frac{249}{\sqrt{2}}}
\rnode{-4.5,1.5}{\frac{26163}{128}{-}\frac{289}{\sqrt{2}}}
\rnode{-2.5,3.5}{\frac{12939}{8}{-}\frac{18297}{8 \sqrt{2}}}
\rnode{0.5,3.5}{\frac{161}{8 \sqrt{2}}{-}14}
\rnode{2.5,1.5}{\frac{121}{\sqrt{2}}{-}\frac{341}{4}}
\rnode{-1,4}{16{-}\frac{179}{8 \sqrt{2}}}
\rnode{3,0}{\frac{5}{16}}
\rnode{-2,4}{\frac{7695}{8 \sqrt{2}}{-}680}
\rnode{0,4}{1{-}\frac{9}{8 \sqrt{2}}}
\rnode{3,1}{\frac{299}{16}{-}13 \sqrt{2}}
\rnode{-4,3}{\frac{39039}{4}{-}\frac{55209}{4 \sqrt{2}}}
\rnode{2,3}{\frac{4959}{4 \sqrt{2}}{-}\frac{7011}{8}}
\rnode{-4.5,2.5}{\frac{28615}{4 \sqrt{2}}{-}\frac{647475}{128}}
\rnode{-3.5,3.5}{\frac{134017}{8 \sqrt{2}}{-}\frac{189527}{16}}
\rnode{1.5,3.5}{455{-}\frac{5145}{8 \sqrt{2}}}
\rnode{2.5,2.5}{\frac{3683}{4}{-}\frac{5207}{4 \sqrt{2}}}
\rnode{-3,4}{9086{-}\frac{102795}{8 \sqrt{2}}}
\rnode{1,4}{\frac{1445}{8 \sqrt{2}}{-}\frac{255}{2}}
\rnode{3,2}{\frac{741}{\sqrt{2}}{-}\frac{8379}{16}}
\rnode{3.5,0.5}{\sqrt{2}{-}\frac{9}{8}}
\rnode{3.5,1.5}{\frac{1275}{8}{-}\frac{225}{\sqrt{2}}}
\rnode{-4.5,3.5}{\frac{6934963}{128}{-}\frac{612969}{8 \sqrt{2}}}
\rnode{2.5,3.5}{\frac{53361}{8 \sqrt{2}}{-}\frac{18865}{4}}
\rnode{-4,4}{\frac{724135}{8 \sqrt{2}}{-}64005}
\rnode{2,4}{\frac{22019}{8}{-}\frac{31137}{8 \sqrt{2}}}
\rnode{3,3}{\frac{78507}{16}{-}\frac{27755}{4 \sqrt{2}}}
\rnode{4,0}{\frac{35}{128}}
\rnode{4,1}{17 \sqrt{2}{-}\frac{3043}{128}}
\rnode{3.5,2.5}{\frac{17479}{4 \sqrt{2}}{-}\frac{24717}{8}}
\rnode{4,2}{\frac{149283}{128}{-}\frac{1649}{\sqrt{2}}}
\rnode{3,4}{\frac{290173}{8 \sqrt{2}}{-}\frac{410363}{16}}
\rnode{3.5,3.5}{\frac{212095}{8}{-}\frac{299945}{8 \sqrt{2}}}
\rnode{4,3}{\frac{101303}{4 \sqrt{2}}{-}\frac{2292195}{128}}
\rnode{4,4}{\frac{18521123}{128}{-}\frac{1637049}{8 \sqrt{2}}}
\end{axis}
\end{tikzpicture}}
\caption{Values of $G_H(x,y)$: equivalently, voltages in the half plane with mixed Dirichlet--Neumann boundary conditions, and $1$ unit of current inserted at the origin.}
\label{half-z2}
\end{figure}
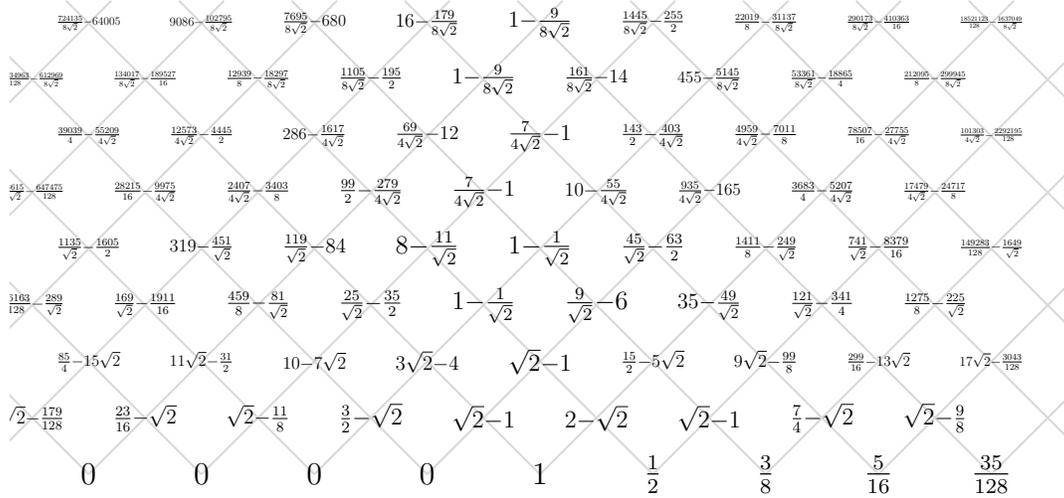

\subsection{Boundary values}

We compute the boundary voltages and currents of $G_H$.

Let $c_{x}$ denote the current that enters the network at $(x,0)$.  Then $c_0=1$, and for $x<0$ the $-c_x$'s give the exit
probabilities of a simple random walk on the slit plane started at $(0,0)$. (Because we scaled voltages by $2$,
these same $c_x$ are the exit probabilities for the simple random walk on the half-plane with reflecting boundary conditions
on the nonnegative $x$-axis.)
Define the associated generating function
\begin{equation}\label{eq:C-Z2-def}
C(z)=\sum_{k=0}^{-\infty} c_k z^{k}.
\end{equation}
Let
\begin{equation} \label{eq:V-Z2-def}
V(z)=\sum_{k=0}^\infty G_H(2k,0) z^{2k}\,.
\end{equation}
The series of voltage drops for $G_H$ along the $x$-axis is defined on odd integers $\geq -1$, and is $(z-1/z)V(z)$.

In the full plane $\Z^2$, when one unit of current is inserted at $0$ and removed at infinity,
using the diagonal values of the Green's function on $\Z^2$, the
voltage drop from $(2k,0)$ to $(2k+2,0)$ is (for any $k\in\Z$)
\[
\frac{2/\pi}{2k+1}\,.
\]
Let $\delta^+(z)$, $\delta^-(z)$, and $\delta(z)$ be the generating functions for voltage drops on the positive-half, negative-half, and whole axis respectively:
\begin{align*}
\delta^+(z) &= \frac{1}{\pi}\log\frac{1+z}{1-z} \\
\delta^-(z) &= -\frac{1}{\pi}\log\frac{1+z^{-1}}{1-z^{-1}} \\
\delta(z) &= \delta^+(z)+ \delta^-(z) = \frac{1}{\pi} \log(-1) = \pm i
\end{align*}
More precisely, $\delta^+(z)$ is a formal power series in $z$ which converges for $|z|\leq 1$ except at $\pm1$, $\delta^-(z)$ is a formal power series in $z^{-1}$ which converges for $|z|\geq 1$ except at $\pm 1$,
and $\delta(z)$ is a formal Laurent series in $z$ which converges for $|z|=1$ except at $z=\pm 1$.  The value it converges to is
\begin{equation}
\delta(z) = \begin{cases} i & \text{$|z|=1$ and $\Im z>0$} \\ -i & \text{$|z|=1$ and $\Im z<0$} \end{cases}
\end{equation}

For any resistor network, when one unit of current is inserted at a
vertex and removed at another vertex, each edge carries at most one
unit of current.  Suppose we apply currents to the network at
infinitely many vertices.  If the applied currents are absolutely
summable, then the resulting current in any edge of the network
converges unconditionally.  Hence the resulting potential function
makes sense (up to a global constant), and is harmonic off the
vertices to which current is applied.

Because of the probabilistic interpretation of the coefficients of
$C(z)$, its coefficients are absolutely summable, so we may apply
currents according to $C(z)$ to obtain a well-defined set of voltages
which is harmonic off the nonpositive axis.  Recall that $\delta(z)$,
when interpreted as a formal Laurent series, is the generating
function for the voltage drops along the axis when one unit of current
is inserted at the origin.  When currents are inserted according to
$C(z)$, by linearity of voltages, the resulting voltage drops are
$C(z) \delta(z)$.  The voltage drops can also be written $(z-1/z)V(z)$
so we obtain
\begin{equation}\label{eq:Cdelta=DeltaV}
C(z) \delta(z) = (z-1/z) V(z)\,.
\end{equation}

Define
\[
C_*(z) = \sqrt{1-1/z^2}\quad\quad\text{and}\quad\quad D_*(z) = \frac{z-1/z}{\sqrt{1-z^2}},
\]
where $C_*(z)$ has a branch cut from $1$ to $-1$ in the unit disk and is $1$ at $\infty$,
and $D_*(z)$ has a branch cut from $1$ to $-1$ outside the unit disk and the $\sqrt{1-z^2}$ term is $1$ at $z=0$.
Then $D_*(z) = \pm \sqrt{1/z^2-1}$, so $D_*(z)/C_*(z)=\pm i$, where the choice is $+i$ in the region containing
$z=i$ and $-i$ in the region containing $z=-i$,
\[ C_*(z) \delta(z) = D_*(z)\quad\quad\quad\text{when $|z|=1$ and $z\neq\pm 1$}. \]

Observe that $C_*(z)$ is a series with only nonpositive powers of $z$, and $C_*(1)=0$, and except for the
constant term, all remaining coefficients are negative, so the coefficients are absolutely summable.
The equation $C_*(z)\delta(z)=D_*(z)$ implies $D_*(z)$ is the series of resulting voltage drops.
Since $z D_*(z)$ is a series with only nonnegative powers of $z$, the resulting voltages are $0$ on the
negative axis.  If we apply the currents $C-C_*$ (which we can do since the coefficients of $C$ and $C_*$ are
both absolutely summable), currents are only applied on the negative axis
(since the constant terms of $C$ and $C_*$ are the same), and the
resulting voltages are all $0$ on the negative axis.
Since the resulting voltages are harmonic off the negative axis, they are zero everywhere.
From this we deduce that $(z-1/z)V(z)-D_*(z)=0$ and $C(z)-C_*(z)=0$,
i.e.,
\begin{equation} \label{eq:C,V-Z2}
C(z) = \sqrt{1-1/z^2}\quad\quad\text{and}\quad\quad V(z) = \frac{1}{\sqrt{1-z^2}}\,.
\end{equation}

\begin{remark}
  The form of equation~\eqref{eq:Cdelta=DeltaV} suggests that one
  might solve for $C(z)$ and $V(z)$ using Wiener--Hopf factorization,
  in which a function $h(z)$ is factored into $h(z) = f_+(z) f_-(z)$,
  where $f_+(z)$ is analytic in a neighborhood of the unit disk, and
  $f_-(z)$ is analytic in a neighborhood of the complement of the unit
  disk.  When $h(z)$ is analytic in an annulus containing the unit
  circle, this factorization is unique up to a constant factor, which
  would allow one to ``guess and verify'' the factors $f_+$ and $f_-$.
  For our application, $h(z)$ is non-analytic on the unit circle, and
  we were unable to find a version of the unique factorization theorem
  applicable in this setting; indeed, when $f_+$ and $f_-$ are
  allowed to have non-analyticities on the unit circle, the
  factorization is not unique up to constants.  For this reason we
  took a more ``bare hands'' approach to solving for the factors,
  where we made use of the extra information not encoded in
  \eqref{eq:Cdelta=DeltaV}, that is, information we have about the
  coefficients in the series expansions for $C(z)$ and $V(z)$.
\end{remark}

\subsection{The next row}

Given all the values of $G_H(x,0)$ together with $G_H(1,1)$,
harmonicity on the positive $x$-axis allows us to deduce
all the remaining values $G_H(x,1)$.
For $y\geq 0$ let
\[g_y^+(z) = \sum_{x\geq 0} G_H(x,y) z^x\,.\]
Then $g_0^+(z)=V(z)=1/\sqrt{1-z^2}$.  Since $G_H$ is harmonic on the positive axis, we have
\[
2 g_0^+(z) - (z+1/z)g_1^+(z) = 2 G_H(0,0) - G_H(1,1)
\]
Thus
\[ g_1^+(z) = \frac{2/\sqrt{1-z^2} - 2 +G_H(1,1)}{z+1/z}\]
Since $\lim_{x\to\infty} G_H(x,1) = 0$, this implicitly determines $G_H(1,1)$.
If we guess a value $\tilde{G}_H(1,1)$ for $G_H(1,1)$ and solve for the rest of the first row using harmonicity, the resulting values $\tilde{G}_H(x,1)$ would
converge to a sequence alternating between $\pm(\tilde{G}_H(1,1)-G_H(1,1))$, and $\sum_x \tilde{G}_H(x,1) i^x$ would diverge if $\tilde{G}_H(1,1)\neq G_H(1,1)$.
From the above equation we see that $g_1^+(z)$ would not diverge as $z\to i$ only if $G_H(1,1) = 2-\sqrt{2}$, so this must be the correct value of $G_H(1,1)$.
Since $2 G_H(0,0)-G_H(1,1)-G_H(-1,1)=1$, we have
\begin{equation} \label{GH(-1,1)}
G_H(-1,1) = \sqrt{2}-1 \,.
\end{equation}
In principle the same method can be used to find the other values of $G_H$ row by row, but we will instead
derive a generating function for the values along the $y$-axis.  For that calculation it is convenient
to already have the value of $G_H(-1,1)$.

\subsection{Self-duality in the half-plane}\label{sdhp}

Consider the function $G_H(v)$ of Figure~\ref{half-z2}.
Let $G^*_H$ denote the corresponding dual voltages.
Since no current flows into the network along the positive $x$-axis, the dual voltages on
the faces on the positive axis are all equal, so we take them to be $0$ there.
One unit of current flows into the network at $0$, so $G^*_H(-1,0)=-1$.
Since the voltages on the negative real axis are $0$, the dual currents at $(-2k-1,0)$ for $k>1$
are zero.
The harmonic conjugate $G^*_H$ therefore
satisfies (up to a scale factor) the same boundary conditions as $G_H$ but reflected, where
$(x,y)$ gets mapped to $(-x-1,y)$.  By taking the dual twice, we see that the scale factor must be $-1$.
Thus
\[
G^*_H(x,y) = -G_H(-x-1,y)\,.
\]

By combining the definition of the dual voltages with the reflection-self-duality, we have
\begin{align*}
G_H(x,y-1)-G_H(x-1,y) &= G^*_H(x,y)-G^*_H(x-1,y-1) \\ &= -G_H(-x-1,y) + G_H(-x,y-1)
\end{align*}
and
\begin{align*}
G_H(x,y)-G_H(x-1,y-1) &= G^*_H(x-1,y)-G^*_H(x,y-1) \\ &= -G_H(-x,y) + G_H(-x-1,y-1)
\end{align*}
which in the case $x=0$ and $y>0$ gives
\begin{equation} \label{eq:GH-off-vertical}
G_H(0,y) = G_H(-1,y-1).
\end{equation}

\subsection{Moving the zipper}
\label{sec:move-zipper}

To compute $G^Z(u,v)$ for general $u$ and $v$, we can use the above computation for $G^Z((0,0),v)$ and introduce new zipper edges.  A similar approach was used in \cite{KW4}.
Let $\Delta^Z$ denote the Laplacian with the zipper, let $p$ and $q$ be adjacent vertices,
and let $\Delta^{Z'}$ be the Laplacian with the zipper and one extra zipper edge $(p,q)$.
Then for a function $f$,
\[
(\Delta^{Z'} f)(v) = (\Delta^Z f)(v) + 2 f(p) 1_{v=q} + 2 f(q) 1_{v=p}
\]

Define
\[\label{newG}
h(v) = G^Z_{u,v} + a G^Z_{p,v} + b G^Z_{q,v}
\]
Then
\[
(\Delta^{Z'} h)(v) = 1_{v=u} + (a+2 G^Z_{u,q}+ 2 a G^Z_{p,q}+2 b G^Z_{q,q}) 1_{v=p} + (b+2 G^Z_{u,p}+2 a G^Z_{p,p} + 2 b G^Z_{q,p}) 1_{v=q}
\]
We choose the edge $(p,q)$ to be incident to the face containing the endpoint of the zipper,
and we choose $a$ and $b$ so that the last two terms are zero:
\begin{align*}
a+2 G^Z_{u,q}+ 2 a G^Z_{p,q}+2 b G^Z_{q,q} &= 0\\
b+2 G^Z_{u,p}+2 a G^Z_{p,p} + 2 b G^Z_{q,p} &= 0\,.
\end{align*}
For the square lattice with zipper $\gamma$ of Figure~\ref{zfig} let $p=(0,0)$ and $q=(-1,0)$.
Then $G^Z_{p,p}=G^Z_{q,q}=1/2$, and $G^Z_{p,q}=G^Z_{q,p}=1/\sqrt{2}-1/2$, which gives
\begin{align*}
\sqrt{2}a + 2 G^Z_{u,q} + b &= 0\\
\sqrt{2}b + 2 G^Z_{u,p} + a &= 0
\end{align*}
and hence
\begin{align*}
a &= 2 G^Z_{u,p} - 2\sqrt{2} G^Z_{u,q} \\
b &= 2 G^Z_{u,q} - 2\sqrt{2} G^Z_{u,p} \,.
\end{align*}

\subsection{Vertical generating function}

To obtain the ``vertical generating function'' $\sum_{k\ge0} G_H(0,2k)w^k$
we work with the $G_Z^\circlearrowleft$, which is the zipper Green's function $G_Z$ on the
whole lattice $\Z^2$ but with rotated and dilated coordinates.
We let $u=(0,0)$ and $v=(2k,0)$ and adjoin the zipper edges $(p,q)$ with $p=(0,0)$ and $q=(-1,1)$.
The zipper then starts in the
face at $(0,1)$, and we deform the zipper so that it goes up vertically from there.  Then
\[G_{Z'}^\circlearrowleft((0,0),(2k,0)) = G_Z^\circlearrowleft((0,0),(0,2k))\,.\]
But by \eqref{newG}
\[ G_{Z'}^\circlearrowleft((0,0),v) = G_Z^\circlearrowleft((0,0),v) (1+a) + b G_Z^\circlearrowleft((-1,1),v) \]
where
\[ a = 2 G_Z^\circlearrowleft((0,0),(0,0)) - 2\sqrt{2} G_Z^\circlearrowleft((0,0),(-1,1)) = 1 - (2-\sqrt{2}) = \sqrt{2}-1\]
and
\[b = 2 G_Z^\circlearrowleft((0,0),(-1,1)) - 2\sqrt{2} G_Z^\circlearrowleft((0,0),(0,0)) = \sqrt{2}-1 - \sqrt{2} = -1 \]
But by deforming the zipper, we see that
$G_Z^\circlearrowleft((-1,1),(2k,0)) = G_Z^\circlearrowleft((0,0),(-1,2k+1))$.
Thus
\[
G_Z^\circlearrowleft((0,0),(0,2k)) = \sqrt{2} G_Z^\circlearrowleft((0,0),(2k,0)) - G_Z^\circlearrowleft((0,0),(-1,2k+1))
\]
For $k\geq 0$, by the reflection-duality, $G_Z^\circlearrowleft((0,0),(-1,2k+1)) = G_Z^\circlearrowleft((0,0),(0,2k+2))$,
so
\begin{equation} \label{eq:GH-vertical-recurrence}
G_H(0,2k)+G_H(0,2k+2) = \sqrt{2} G_H(2k,0)\quad\quad\quad k\geq 0\,.
\end{equation}
This gives a recurrence for the vertical values, which we can encode in the generating function
\begin{equation}
\sum_{y\geq 2} G_H(0,y) w^y = \frac{\frac{\sqrt{2}}{\sqrt{1-w^2}}-1}{1+w^2} \,.
\end{equation}

\begin{theorem} \label{thm:GH-alg}
We have an algorithm for computing $G_H(x,y)$,
which uses the values for $G_H$ given by equations \eqref{eq:V-Z2-def}, \eqref{eq:C,V-Z2}, \eqref{eq:GH-vertical-recurrence}, and \eqref{eq:GH-off-vertical}
together with the recurrence relation given by the harmonicity of $G_H$.
As a consequence of this algorithm we see that all values of $G_H(x,y)$ are in $\Q[\sqrt{2}]$, in fact they're dyadic rationals with $\sqrt{2}$ adjoined.
\end{theorem}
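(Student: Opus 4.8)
The plan is to assemble the algorithm out of the pieces already in hand and then track denominators and the field of definition through each step. First I would observe that the boundary row is completely known in closed form: by equation~\eqref{eq:C,V-Z2} we have $V(z)=1/\sqrt{1-z^2}$, whose Taylor coefficients are $G_H(2k,0)=\binom{2k}{k}/4^k$, manifestly dyadic rationals (and the odd-index values $G_H(2k+1,0)$ are $0$). So the entire $x$-axis lies in $\Q$, with dyadic denominators. Next, the value $G_H(1,1)=2-\sqrt2$ was pinned down, and $G_H(-1,1)=\sqrt2-1$ by \eqref{GH(-1,1)}; both lie in $\Z[\sqrt2]\subset\Q[\sqrt2]$.

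Second, I would feed these into the two recurrences. Equation~\eqref{eq:GH-vertical-recurrence}, $G_H(0,2k)+G_H(0,2k+2)=\sqrt2\,G_H(2k,0)$ with $G_H(0,0)=1$, determines the whole vertical axis $\{G_H(0,2k)\}_{k\ge0}$ by a first-order linear recursion; since the inhomogeneous term $\sqrt2\,G_H(2k,0)$ is a dyadic rational times $\sqrt2$, an easy induction gives $G_H(0,2k)\in\Q[\sqrt2]$ with dyadic denominators. Then equation~\eqref{eq:GH-off-vertical}, $G_H(0,y)=G_H(-1,y-1)$, transfers these to the column $x=-1$ at no cost. Finally, harmonicity of $G_H$ off the nonpositive $x$-axis — i.e. the five-point Laplace relation $4G_H(x,y)=G_H(x\pm1,y)+G_H(x,y\pm1)$ valid whenever $(x,y)$ is not a current source — lets one solve for $G_H(x,y)$ in terms of four neighbors; propagating outward from the known axis values and the known column $x=-1,0$, one reaches every vertex. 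Each application of the five-point relation divides by $4$ and adds elements of $\Q[\sqrt2]$, so the ring $\Z[\tfrac12,\sqrt2]$ is preserved at every step; hence all $G_H(x,y)\in\Q[\sqrt2]$ and in fact in $\Z[\tfrac12,\sqrt2]$.

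The point requiring care is the \emph{order of propagation}: one must check that the known data (the full $x$-axis, plus the columns $x=0$ and $x=-1$, plus the reflection symmetry implicit in the half-plane normalization) is enough seed data for the five-point recurrence to determine all remaining $G_H(x,y)$ with $y\ge2$ by finitely many substitutions — that is, that one never needs a ``boundary value at infinity'' as genuine input rather than as the already-used normalization that fixed $G_H(1,1)$. Concretely, I would argue that knowing rows $y=0$ and (via $x=0,-1$ together with $G_H(x,0)$) enough of the structure to start, one computes row $y=1$ from harmonicity on the positive axis, then row $y=2$, and so on; the decay condition $G_H(x,y)\to0$ has already been used to fix the one free constant per row, so no further analytic input enters. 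The other mild subtlety is bookkeeping that the ``scale by $2$'' and the $45^\circ$ rotation relating $G_H$ to $G_D$ and $G_Z$ do not introduce non-dyadic denominators, which is immediate since we only ever multiply by $2$. Everything else is the routine verification that a finite composition of $\Q[\sqrt2]$-affine operations stays in $\Q[\sqrt2]$.
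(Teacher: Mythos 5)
Your overall strategy---assemble the explicitly known boundary data ($V(z)=1/\sqrt{1-z^2}$ along the axis, the vertical recurrence \eqref{eq:GH-vertical-recurrence}, the transfer \eqref{eq:GH-off-vertical} to the column $x=-1$, the seeds $G_H(1,1)=2-\sqrt2$, $G_H(-1,1)=\sqrt2-1$) and then propagate by the Laplace relation while tracking the ring $\Z[\tfrac12,\sqrt2]$---is the same as the paper's, and the ring-tracking part is fine (each step solves for one unknown with coefficient $1$, or divides by a power of $2$). Two problems, one minor and one substantive. Minor: in the coordinates in which $G_H$ is defined (vertices with $x+y$ even), the harmonicity stencil is the diagonal one, $4G_H(x,y)=G_H(x-1,y-1)+G_H(x+1,y-1)+G_H(x-1,y+1)+G_H(x+1,y+1)$, not $G_H(x\pm1,y)+G_H(x,y\pm1)$; on the positive axis it is the reflected relation $2G_H(x,0)=G_H(x-1,1)+G_H(x+1,1)$.

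The substantive gap is your claim that propagating from row $y=0$ and the columns $x=0,-1$ ``reaches every vertex'' with ``no further analytic input.'' That is true for the region $x\ge -2$ (row $0$ plus the column seeds do determine columns $-2,-1,0,1,2,\dots$ by finitely many substitutions, using reflected harmonicity on the positive axis for the odd-column seeds), but it fails for the second quadrant $x\le-3$, $y\ge1$. There, harmonicity does \emph{not} hold at the Dirichlet vertices $(x,0)$, $x<0$ (current exits there), so every available five-point relation touching, say, $G_H(-3,1)=\tfrac32-\sqrt2$ contains at least two unknowns, and no finite sequence of substitutions from your seed data determines it: each column $x\le-3$ is pinned down only modulo an alternating-sign homogeneous solution. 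The decay condition was used only once, to fix $G_H(1,1)$; to kill the free constant in every left column you would need a fresh limiting argument each time, contradicting your ``no further analytic input'' claim. The missing ingredient is the full reflection self-duality of Section~\ref{sdhp}, $G^*_H(x,y)=-G_H(-x-1,y)$, i.e.\ the displayed identities such as $G_H(x,y-1)-G_H(x-1,y)=G_H(-x,y-1)-G_H(-x-1,y)$ for \emph{all} $x$ (equation \eqref{eq:GH-off-vertical} is only their $x=0$ instance). These transfer the already-computed right-half values to the columns $x=-3,-4,\dots$ by finite $\Q[\sqrt2]$-linear arithmetic; equivalently one can expand the quadrant generating function $G_W(z,w)$ of Section~\ref{GFquads}. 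With that supplement your denominator/field bookkeeping goes through and gives the dyadic $\Q[\sqrt2]$ conclusion.
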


\begin{corollary} \label{cor:GZ}
  We have an algorithm for computing $G_Z$: $G_Z(0,v)$ is determined
  by $G_H$, and for $u\neq 0$, $G_Z(u,v)$ can be recursively computed
  by moving the endpoint of the zipper as described in
  Section~\ref{sec:move-zipper}.
\end{corollary}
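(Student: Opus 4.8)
The plan is to combine three things already established: the algorithm of \tref{thm:GH-alg} for $G_H$, the identity $G_Z((0,0),\cdot)=G_D((0,0),\cdot)$ together with the reflection symmetries of the slit plane, and the zipper‑moving identity of \sref{sec:move-zipper}.

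First I would dispose of $u=0$. By the definition of $G_H$, for any vertex $v=(a,b)$ with $b\ge a$ one has $G_Z((0,0),v)=\tfrac12\,G_H(a{+}b,\,b{-}a)$; for $b<a$ one uses that $G_Z((0,0),\cdot)=G_D((0,0),\cdot)$ is invariant under reflection in the diagonal $y=x$ (which fixes the source $(0,0)$ and the slit $D$, and equals $0$ on $D$ itself), so that $G_Z((0,0),(a,b))=\tfrac12\,G_H(a{+}b,\,a{-}b)$. Hence every value $G_Z((0,0),v)$ is one half of an explicit value of $G_H$, computable by \tref{thm:GH-alg}, and so lies in $\Q[\sqrt2]$. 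The same observation, applied now with the three lattice reflections fixing the branch face $f_0=(-\tfrac12,-\tfrac12)$ — in $x=-\tfrac12$, in $y=-\tfrac12$, and the point reflection through $f_0$ — also produces the four corner functions $G_Z(c,\cdot)$, $c$ a corner of $f_0$: a reflection $\rho$ fixing $f_0$ carries $Z$ to a zipper $\rho(Z)$ with the same branch face but a possibly different shape, and $G^{\rho(Z)}(a,b)=\pm\,G_Z(a,b)$, the sign being recovered by deforming $\rho(Z)$ back to $\gamma$ one vertex at a time (each sweep across a vertex $x$ reversing the sign of a pair $(a,b)$ exactly when $x$ equals one of $a,b$). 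Since $\rho$ sends some corner to $(0,0)$, this expresses each $G_Z(c,\cdot)$ through $G_H$.

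For $u\neq 0$, fix also $v$ and choose a finite path of faces $f_0=\hat f_0,\hat f_1,\dots,\hat f_N$ in which $\hat f_{i-1}$ and $\hat f_i$ share a primal edge $e_i=(p_i,q_i)$ and $\hat f_N$ is incident to $u$. Let $Z_i$ be obtained from $Z$ by adjoining the edges $e_1,\dots,e_i$ and deforming the tail back to standard shape emanating from $\hat f_i$ (recording the finitely many relevant sign changes, as above). Applying \sref{sec:move-zipper} with source $u$ and extra zipper edge $e_i$: the function $h(v)=G^{Z_{i-1}}_{u,v}+a_i G^{Z_{i-1}}_{p_i,v}+b_i G^{Z_{i-1}}_{q_i,v}$ satisfies $\Delta^{Z_i}h=1_{v=u}$ once $a_i,b_i$ are chosen to kill the two extra terms, so by uniqueness of the Green's function it equals $G^{Z_i}_{u,\cdot}$, giving
\[
G^{Z_i}_{u,v}=G^{Z_{i-1}}_{u,v}+a_i\,G^{Z_{i-1}}_{p_i,v}+b_i\,G^{Z_{i-1}}_{q_i,v},
\]
with
\[
a_i=2G^{Z_{i-1}}_{u,p_i}-2\sqrt2\,G^{Z_{i-1}}_{u,q_i},\qquad b_i=2G^{Z_{i-1}}_{u,q_i}-2\sqrt2\,G^{Z_{i-1}}_{u,p_i},
\]
these formulas being valid because $p_i,q_i$ are adjacent corners of the branch face $\hat f_{i-1}$, so the local values $G^{Z_{i-1}}_{p_ip_i}=G^{Z_{i-1}}_{q_iq_i}=\tfrac12$ and $G^{Z_{i-1}}_{p_iq_i}=\tfrac1{\sqrt2}-\tfrac12$ hold by translation invariance together with a choice of deformation avoiding $p_i,q_i$. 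Solving for $G^{Z_{i-1}}_{u,v}$ and telescoping down to $i=N$ writes $G_Z(u,v)=G^{Z_0}_{u,v}$ as an explicit finite $\Q[\sqrt2]$‑combination of $G^{Z_N}_{u,v}$, of the terms $G^{Z_{i-1}}_{p_i,v}$ and $G^{Z_{i-1}}_{q_i,v}$, and of $G^{Z_{i-1}}_{u,p_i}=G^{Z_{i-1}}_{p_i,u}$ and $G^{Z_{i-1}}_{u,q_i}=G^{Z_{i-1}}_{q_i,u}$ (reciprocity). In every one of these the source argument is a corner of the branch face of the zipper in question ($u$ being a corner of $\hat f_N$), so translating that branch face onto $f_0$ turns it into one of the four corner functions of the previous paragraph, hence into explicit values of $G_H$. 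Thus $G_Z(u,v)$ is computed in finitely many steps, with no further recursion, and lies in $\Q[\sqrt2]$.

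The step I expect to be the main obstacle is the sign bookkeeping. Because $G_Z$ depends on the homotopy class of the dual path $\gamma$, every reflection used to reduce a corner to $(0,0)$ and every deformation used to straighten an intermediate zipper changes the value on a pair $(a,b)$ by a sign governed by the parity of the number of times the moving cut is swept across $a$ plus the number across $b$; keeping this parity correct for the (finitely many, fixed) arguments that actually occur is the only delicate part, and at each step it is a finite check rather than an estimate. Everything else is assembly: there is no new analytic input, and the well‑definedness of all the Green's functions $G^{Z_i}$ on $\Z^2$ is inherited from the definition of $G_Z=G_D$ and the absolute‑summability facts already established in this section.
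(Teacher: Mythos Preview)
Your proposal is correct and is precisely the elaboration the paper intends: the paper states the corollary without proof, relying on the reader to assemble \tref{thm:GH-alg}, the relation $G_H(x,y)=2G_Z((0,0),\cdot)$, and the zipper-moving identity of \sref{sec:move-zipper}. Your telescoping argument---moving the branch face along a dual path to a face incident to $u$, using reciprocity to reduce the coefficients $a_i,b_i$ to corner-source values, and then translating each corner-source term back to $G_H$---is exactly the intended mechanism, and your remark that the sign bookkeeping is the only delicate point is accurate.
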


\subsection{Generating function in quadrants}\label{GFquads}

Let $G_N(z,w)$ and $G_W(z,w)$ be the formal power series for $G_H$ in the first and second quadrants:
\begin{align*}
G_N(w,z) &= \sum_{\substack{x\geq0\\y\geq 0}} G_H(x,y) z^x w^y &
G_W(w,z) &= \sum_{\substack{x<0\\y\geq 0}} G_H(x,y) z^x w^y .
\end{align*}
Recall that $V(z) = G_N(z,0) = 1/\sqrt{1-z^2}$ is the generating function of voltages along the $x$-axis.
Let $B(w) = G_N(0,w)-1$ be the generating function of voltages along the positive $y$-axis.
From the reflection duality \eqref{eq:GH-off-vertical}, we had $\sum_y G_H(-1,y) w^y = B(w)/w$.

Since the Green's function in the quadrant is harmonic except along the boundary, we can compute
\begin{multline*}
(4-(w+1/w)(z+1/z)) G_N(z,w) = \\ (2-(z+1/z)/w) V(z) - w/z - (w+1/w)/z B(w) + (1+1/w^2) B(w) + 2 - G_H(-1,1) - G_H(1,1)
\end{multline*}
which we may rewrite as
\begin{equation}
G_N(z,w) = \sum_{\substack{x\geq 0\\y\geq 0}} G_H(x,y) z^x w^y = \frac{\frac{\sqrt{2}(1-w/z)}{\sqrt{1-w^2}}+\frac{2-z/w-1/zw}{\sqrt{1-z^2}}}{4-w/z-z/w-z w-1/zw}\,.
\end{equation}

By multiplying both numerator and denominator by $w z$, we can extract all values in the quadrant via ordinary series expansion.
Similarly,
\begin{equation}
G_W(z,w) = \sum_{\substack{x<0\\y\geq 0}} G_H(x,y) z^x w^y = \frac{\frac{\sqrt{2}(w/z-1)}{\sqrt{1-w^2}}+\sqrt{1-1/z^2}}{4-w/z-z/w-z w-1/zw}
\end{equation}
is a power series in $1/z$ and $w$.

Notice that $G_N(z,w)+G_W(z,w)$ is a formal Laurent series in $z$ and power series in $w$, in which the $\sqrt{2}$ terms cancel.
The $\sqrt{2}$ terms can be recovered from $G_N+G_W$ even though they do not appear explicitly, using the fact that the coefficients
converge to $0$ at infinity, but it is easier to work with $G_N$ and $G_W$ than $G_N+G_W$.

Since the voltages are between $0$ and $1$, the generating function $G_N(z,w)$
is convergent for  $|z|<1$ and $|w|<1$, while $G_W(z,w)$ is convergent for $|z|>1$ and $|w|<1$.
There is a whole curve of $(z,w)$ values along which the denominator $4-w/z-z/w-z w-1/zw$ vanishes,
but the numerator also vanishes along that curve.

\section{Green's function on the double cover of \texorpdfstring{$\Z^2$}{Z\texttwosuperior}}\label{branchedcoversection}

\begin{figure}[htbp]
\centerline{\begin{tikzpicture}[scale=0.9]
\newcommand{\bdclen}{2.19737}
\begin{axis}[xmin=-\bdclen,xmax=\bdclen,ymin=-\bdclen,ymax=\bdclen,x={(3.5cm,0)}, y={(0,3.5cm)}, axis lines=none]
\draw [white,fill=blue!40] (axis cs:-1,0) circle [radius=4];
\draw [white,fill=red!40] (axis cs:+1,0) circle [radius=4];
\addplot [domain=-2:2,gray!30!white,thick] (x,0);
\addplot [domain=-2:2,gray!30!white,thick] (0,x);
\addplot [domain=-2:2,gray!30!white,thick] ({x/sqrt(2)},{x/sqrt(2)});
\addplot [domain=-2:2,gray!30!white,thick] ({x/sqrt(2)},{-x/sqrt(2)});
\foreach \th in {0,...,7} {
\foreach \y in {1,...,4} {\edef\temp{\noexpand\addplot [domain=-4:4,samples=50,gray!30!white,thick]({cos(atan(x/\y)/2+\th*45)*(x^2+\y*\y)^0.25},{sin(atan(x/\y)/2+\th*45)*(x^2+\y*\y)^0.25});}\temp} ;
};
\rnode{0,0}{0}
\rnode{0,-1.00000}{0}
\rnode{0,1.00000}{0}
\rnode{0.707107,-0.707107}{\frac{3}{2}{-}\sqrt{2}}
\rnode{-0.707107,0.707107}{\sqrt{2}{-}\frac{3}{2}}
\rnode{0.707107,0.707107}{\frac{3}{2}{-}\sqrt{2}}
\rnode{-0.707107,-0.707107}{\sqrt{2}{-}\frac{3}{2}}
\rnode{-1.00000,0}{1{-}\sqrt{2}}
\rnode{1.00000,0}{\sqrt{2}{-}1}
\rnode{-0.455090,1.09868}{\frac{1}{\sqrt{2}}{-}\frac{3}{4}}
\rnode{0.455090,-1.09868}{\frac{3}{4}{-}\frac{1}{\sqrt{2}}}
\rnode{-0.455090,-1.09868}{\frac{1}{\sqrt{2}}{-}\frac{3}{4}}
\rnode{0.455090,1.09868}{\frac{3}{4}{-}\frac{1}{\sqrt{2}}}
\rnode{-1.09868,0.455090}{\frac{1}{2}{-}\frac{1}{\sqrt{2}}}
\rnode{1.09868,-0.455090}{\frac{1}{\sqrt{2}}{-}\frac{1}{2}}
\rnode{-1.09868,-0.455090}{\frac{1}{2}{-}\frac{1}{\sqrt{2}}}
\rnode{1.09868,0.455090}{\frac{1}{\sqrt{2}}{-}\frac{1}{2}}
\rnode{0,-1.41421}{0}
\rnode{0,1.41421}{0}
\rnode{-1.00000,1.00000}{4 \sqrt{2}{-}\frac{23}{4}}
\rnode{1.00000,-1.00000}{\frac{23}{4}{-}4 \sqrt{2}}
\rnode{-1.00000,-1.00000}{4 \sqrt{2}{-}\frac{23}{4}}
\rnode{1.00000,1.00000}{\frac{23}{4}{-}4 \sqrt{2}}
\rnode{1.41421,0}{3 \sqrt{2}{-}4}
\rnode{-1.41421,0}{4{-}3 \sqrt{2}}
\rnode{0.343561,-1.45535}{\frac{5}{2}{-}\frac{7}{2 \sqrt{2}}}
\rnode{-0.343561,1.45535}{\frac{7}{2 \sqrt{2}}{-}\frac{5}{2}}
\rnode{0.343561,1.45535}{\frac{5}{2}{-}\frac{7}{2 \sqrt{2}}}
\rnode{-0.343561,-1.45535}{\frac{7}{2 \sqrt{2}}{-}\frac{5}{2}}
\rnode{-0.786151,1.27202}{1{-}\frac{3}{2 \sqrt{2}}}
\rnode{0.786151,-1.27202}{\frac{3}{2 \sqrt{2}}{-}1}
\rnode{-0.786151,-1.27202}{1{-}\frac{3}{2 \sqrt{2}}}
\rnode{0.786151,1.27202}{\frac{3}{2 \sqrt{2}}{-}1}
\rnode{1.27202,-0.786151}{\frac{11}{2 \sqrt{2}}{-}\frac{15}{4}}
\rnode{-1.27202,0.786151}{\frac{15}{4}{-}\frac{11}{2 \sqrt{2}}}
\rnode{1.27202,0.786151}{\frac{11}{2 \sqrt{2}}{-}\frac{15}{4}}
\rnode{-1.27202,-0.786151}{\frac{15}{4}{-}\frac{11}{2 \sqrt{2}}}
\rnode{-1.45535,0.343561}{\frac{3}{2 \sqrt{2}}{-}\frac{5}{4}}
\rnode{1.45535,-0.343561}{\frac{5}{4}{-}\frac{3}{2 \sqrt{2}}}
\rnode{-1.45535,-0.343561}{\frac{3}{2 \sqrt{2}}{-}\frac{5}{4}}
\rnode{1.45535,0.343561}{\frac{5}{4}{-}\frac{3}{2 \sqrt{2}}}
\rnode{0.643594,-1.55377}{\frac{1}{2 \sqrt{2}}{-}\frac{5}{16}}
\rnode{-0.643594,1.55377}{\frac{5}{16}{-}\frac{1}{2 \sqrt{2}}}
\rnode{0.643594,1.55377}{\frac{1}{2 \sqrt{2}}{-}\frac{5}{16}}
\rnode{-0.643594,-1.55377}{\frac{5}{16}{-}\frac{1}{2 \sqrt{2}}}
\rnode{1.55377,-0.643594}{\frac{1}{2}{-}\frac{1}{2 \sqrt{2}}}
\rnode{-1.55377,0.643594}{\frac{1}{2 \sqrt{2}}{-}\frac{1}{2}}
\rnode{1.55377,0.643594}{\frac{1}{2}{-}\frac{1}{2 \sqrt{2}}}
\rnode{-1.55377,-0.643594}{\frac{1}{2 \sqrt{2}}{-}\frac{1}{2}}
\rnode{0,-1.73205}{0}
\rnode{0,1.73205}{0}
\rnode{1.22474,-1.22474}{\frac{105}{4}{-}\frac{37}{\sqrt{2}}}
\rnode{-1.22474,1.22474}{\frac{37}{\sqrt{2}}{-}\frac{105}{4}}
\rnode{1.22474,1.22474}{\frac{105}{4}{-}\frac{37}{\sqrt{2}}}
\rnode{-1.22474,-1.22474}{\frac{37}{\sqrt{2}}{-}\frac{105}{4}}
\rnode{1.73205,0}{\frac{25}{\sqrt{2}}{-}\frac{35}{2}}
\rnode{-1.73205,0}{\frac{35}{2}{-}\frac{25}{\sqrt{2}}}
\rnode{0.284849,-1.75532}{\frac{153}{16}{-}\frac{27}{2 \sqrt{2}}}
\rnode{-0.284849,1.75532}{\frac{27}{2 \sqrt{2}}{-}\frac{153}{16}}
\rnode{0.284849,1.75532}{\frac{153}{16}{-}\frac{27}{2 \sqrt{2}}}
\rnode{-0.284849,-1.75532}{\frac{27}{2 \sqrt{2}}{-}\frac{153}{16}}
\rnode{-1.03978,1.44262}{\frac{163}{16}{-}\frac{29}{2 \sqrt{2}}}
\rnode{1.03978,-1.44262}{\frac{29}{2 \sqrt{2}}{-}\frac{163}{16}}
\rnode{-1.03978,-1.44262}{\frac{163}{16}{-}\frac{29}{2 \sqrt{2}}}
\rnode{1.03978,1.44262}{\frac{29}{2 \sqrt{2}}{-}\frac{163}{16}}
\rnode{1.44262,-1.03978}{\frac{59}{2 \sqrt{2}}{-}\frac{83}{4}}
\rnode{-1.44262,1.03978}{\frac{83}{4}{-}\frac{59}{2 \sqrt{2}}}
\rnode{1.44262,1.03978}{\frac{59}{2 \sqrt{2}}{-}\frac{83}{4}}
\rnode{-1.44262,-1.03978}{\frac{83}{4}{-}\frac{59}{2 \sqrt{2}}}
\rnode{1.75532,-0.284849}{9{-}\frac{25}{2 \sqrt{2}}}
\rnode{-1.75532,0.284849}{\frac{25}{2 \sqrt{2}}{-}9}
\rnode{1.75532,0.284849}{9{-}\frac{25}{2 \sqrt{2}}}
\rnode{-1.75532,-0.284849}{\frac{25}{2 \sqrt{2}}{-}9}
\rnode{-0.550251,1.81735}{\frac{33}{8}{-}\frac{47}{8 \sqrt{2}}}
\rnode{0.550251,-1.81735}{\frac{47}{8 \sqrt{2}}{-}\frac{33}{8}}
\rnode{-0.550251,-1.81735}{\frac{33}{8}{-}\frac{47}{8 \sqrt{2}}}
\rnode{0.550251,1.81735}{\frac{47}{8 \sqrt{2}}{-}\frac{33}{8}}
\rnode{0.895977,-1.67415}{\frac{11}{8}{-}\frac{15}{8 \sqrt{2}}}
\rnode{-0.895977,1.67415}{\frac{15}{8 \sqrt{2}}{-}\frac{11}{8}}
\rnode{0.895977,1.67415}{\frac{11}{8}{-}\frac{15}{8 \sqrt{2}}}
\rnode{-0.895977,-1.67415}{\frac{15}{8 \sqrt{2}}{-}\frac{11}{8}}
\rnode{-1.67415,0.895977}{\frac{63}{8 \sqrt{2}}{-}\frac{91}{16}}
\rnode{1.67415,-0.895977}{\frac{91}{16}{-}\frac{63}{8 \sqrt{2}}}
\rnode{-1.67415,-0.895977}{\frac{63}{8 \sqrt{2}}{-}\frac{91}{16}}
\rnode{1.67415,0.895977}{\frac{91}{16}{-}\frac{63}{8 \sqrt{2}}}
\rnode{1.81735,-0.550251}{\frac{15}{8 \sqrt{2}}{-}\frac{19}{16}}
\rnode{-1.81735,0.550251}{\frac{19}{16}{-}\frac{15}{8 \sqrt{2}}}
\rnode{1.81735,0.550251}{\frac{15}{8 \sqrt{2}}{-}\frac{19}{16}}
\rnode{-1.81735,-0.550251}{\frac{19}{16}{-}\frac{15}{8 \sqrt{2}}}
\rnode{0.788239,-1.90298}{\frac{21}{32}{-}\frac{7}{8 \sqrt{2}}}
\rnode{-0.788239,1.90298}{\frac{7}{8 \sqrt{2}}{-}\frac{21}{32}}
\rnode{0.788239,1.90298}{\frac{21}{32}{-}\frac{7}{8 \sqrt{2}}}
\rnode{-0.788239,-1.90298}{\frac{7}{8 \sqrt{2}}{-}\frac{21}{32}}
\rnode{-1.90298,0.788239}{\frac{1}{2}{-}\frac{7}{8 \sqrt{2}}}
\rnode{1.90298,-0.788239}{\frac{7}{8 \sqrt{2}}{-}\frac{1}{2}}
\rnode{-1.90298,-0.788239}{\frac{1}{2}{-}\frac{7}{8 \sqrt{2}}}
\rnode{1.90298,0.788239}{\frac{7}{8 \sqrt{2}}{-}\frac{1}{2}}
\end{axis}
\end{tikzpicture}}
\caption{The antisymmetric Green's function $G_{\Sigma}^A((1,0)^\beta,\cdot)$.  The source is at $(1,0)^\beta$ shown in red, and the sink is at $(1,0)^{\beta\sigma}$, shown in blue.}
\label{GSA10}
\end{figure}
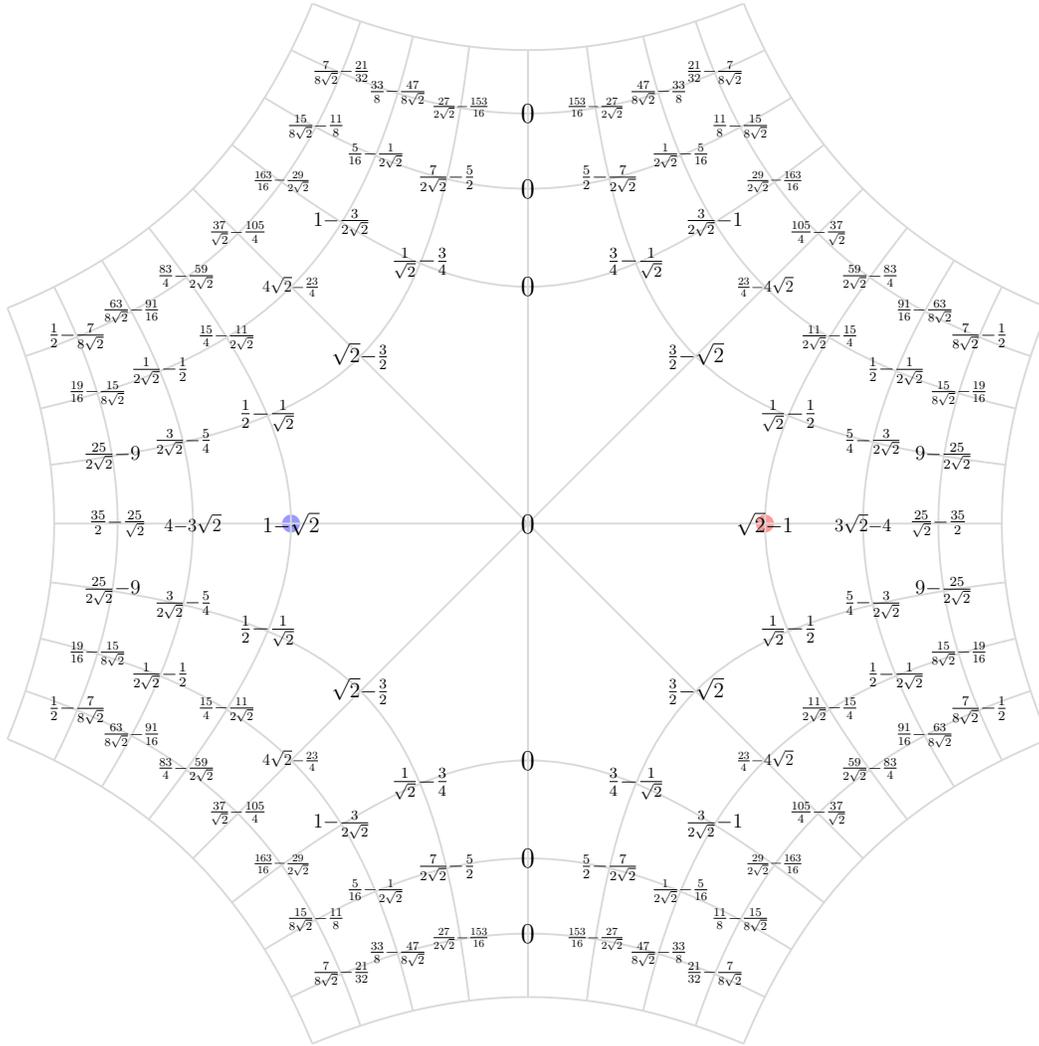

\begin{figure}[htbp]
\centerline{\begin{tikzpicture}[scale=0.9]
\newcommand{\bdclen}{2.19737}
\begin{axis}[xmin=-\bdclen,xmax=\bdclen,ymin=-\bdclen,ymax=\bdclen,x={(3.5cm,0)}, y={(0,3.5cm)}, axis lines=none]
\draw [white,fill=blue!40] (axis cs:-0.776887,-0.321797) circle [radius=4];
\draw [white,fill=red!40] (axis cs:0.776887,0.321797) circle [radius=4];
\foreach \th in {0,...,7} {
\foreach \y in {0.5,...,3.5} {\edef\temp{\noexpand\addplot [domain=-4:4,samples=50,gray!30!white,thick]({cos(atan(x/\y)/2+\th*45)*(x^2+\y*\y)^0.25},{sin(atan(x/\y)/2+\th*45)*(x^2+\y*\y)^0.25});}\temp} ;
};
\rnode{-0.321797,0.776887}{0}
\rnode{0.321797,-0.776887}{0}
\rnode{-0.321797,-0.776887}{\frac{1}{2}{-}\frac{1}{\sqrt{2}}}
\rnode{0.321797,0.776887}{\frac{1}{\sqrt{2}}{-}\frac{1}{2}}
\rnode{-0.776887,0.321797}{\frac{1}{2}{-}\frac{1}{\sqrt{2}}}
\rnode{0.776887,-0.321797}{\frac{1}{\sqrt{2}}{-}\frac{1}{2}}
\rnode{-0.776887,-0.321797}{{-}\frac{1}{2}}
\rnode{0.776887,0.321797}{\frac{1}{2}}
\rnode{0.201419,-1.24120}{\frac{1}{\sqrt{2}}{-}\frac{3}{4}}
\rnode{-0.201419,1.24120}{\frac{3}{4}{-}\frac{1}{\sqrt{2}}}
\rnode{-0.201419,-1.24120}{2{-}\frac{3}{\sqrt{2}}}
\rnode{0.201419,1.24120}{\frac{3}{\sqrt{2}}{-}2}
\rnode{-0.735234,1.02008}{\frac{1}{\sqrt{2}}{-}\frac{3}{4}}
\rnode{0.735234,-1.02008}{\frac{3}{4}{-}\frac{1}{\sqrt{2}}}
\rnode{-0.735234,-1.02008}{\frac{1}{2}{-}\frac{1}{\sqrt{2}}}
\rnode{0.735234,1.02008}{\frac{1}{\sqrt{2}}{-}\frac{1}{2}}
\rnode{-1.02008,0.735234}{2{-}\frac{3}{\sqrt{2}}}
\rnode{1.02008,-0.735234}{\frac{3}{\sqrt{2}}{-}2}
\rnode{1.02008,0.735234}{1{-}\frac{1}{\sqrt{2}}}
\rnode{-1.02008,-0.735234}{\frac{1}{\sqrt{2}}{-}1}
\rnode{-1.24120,0.201419}{\frac{1}{2}{-}\frac{1}{\sqrt{2}}}
\rnode{1.24120,-0.201419}{\frac{1}{\sqrt{2}}{-}\frac{1}{2}}
\rnode{1.24120,0.201419}{1{-}\frac{1}{\sqrt{2}}}
\rnode{-1.24120,-0.201419}{\frac{1}{\sqrt{2}}{-}1}
\rnode{-0.557369,1.34561}{0}
\rnode{0.557369,-1.34561}{0}
\rnode{0.557369,1.34561}{\frac{1}{2}{-}\frac{1}{2 \sqrt{2}}}
\rnode{-0.557369,-1.34561}{\frac{1}{2 \sqrt{2}}{-}\frac{1}{2}}
\rnode{1.34561,-0.557369}{\frac{1}{2}{-}\frac{1}{2 \sqrt{2}}}
\rnode{-1.34561,0.557369}{\frac{1}{2 \sqrt{2}}{-}\frac{1}{2}}
\rnode{-1.34561,-0.557369}{{-}\frac{1}{4}}
\rnode{1.34561,0.557369}{\frac{1}{4}}
\rnode{-0.157337,1.58895}{5{-}\frac{7}{\sqrt{2}}}
\rnode{0.157337,-1.58895}{\frac{7}{\sqrt{2}}{-}5}
\rnode{0.157337,1.58895}{\frac{25}{2 \sqrt{2}}{-}\frac{35}{4}}
\rnode{-0.157337,-1.58895}{\frac{35}{4}{-}\frac{25}{2 \sqrt{2}}}
\rnode{1.01230,-1.23481}{5{-}\frac{7}{\sqrt{2}}}
\rnode{-1.01230,1.23481}{\frac{7}{\sqrt{2}}{-}5}
\rnode{-1.01230,-1.23481}{3{-}\frac{9}{2 \sqrt{2}}}
\rnode{1.01230,1.23481}{\frac{9}{2 \sqrt{2}}{-}3}
\rnode{1.23481,-1.01230}{\frac{25}{2 \sqrt{2}}{-}\frac{35}{4}}
\rnode{-1.23481,1.01230}{\frac{35}{4}{-}\frac{25}{2 \sqrt{2}}}
\rnode{1.23481,1.01230}{\frac{15}{4}{-}\frac{5}{\sqrt{2}}}
\rnode{-1.23481,-1.01230}{\frac{5}{\sqrt{2}}{-}\frac{15}{4}}
\rnode{-1.58895,0.157337}{3{-}\frac{9}{2 \sqrt{2}}}
\rnode{1.58895,-0.157337}{\frac{9}{2 \sqrt{2}}{-}3}
\rnode{1.58895,0.157337}{\frac{15}{4}{-}\frac{5}{\sqrt{2}}}
\rnode{-1.58895,-0.157337}{\frac{5}{\sqrt{2}}{-}\frac{15}{4}}
\rnode{0.455783,-1.64552}{\frac{11}{16}{-}\frac{1}{\sqrt{2}}}
\rnode{-0.455783,1.64552}{\frac{1}{\sqrt{2}}{-}\frac{11}{16}}
\rnode{0.455783,1.64552}{4{-}\frac{11}{2 \sqrt{2}}}
\rnode{-0.455783,-1.64552}{\frac{11}{2 \sqrt{2}}{-}4}
\rnode{-0.841272,1.48585}{\frac{11}{16}{-}\frac{1}{\sqrt{2}}}
\rnode{0.841272,-1.48585}{\frac{1}{\sqrt{2}}{-}\frac{11}{16}}
\rnode{0.841272,1.48585}{\frac{1}{2}{-}\frac{1}{2 \sqrt{2}}}
\rnode{-0.841272,-1.48585}{\frac{1}{2 \sqrt{2}}{-}\frac{1}{2}}
\rnode{1.48585,-0.841272}{4{-}\frac{11}{2 \sqrt{2}}}
\rnode{-1.48585,0.841272}{\frac{11}{2 \sqrt{2}}{-}4}
\rnode{-1.48585,-0.841272}{\frac{1}{2}{-}\frac{1}{\sqrt{2}}}
\rnode{1.48585,0.841272}{\frac{1}{\sqrt{2}}{-}\frac{1}{2}}
\rnode{1.64552,-0.455783}{\frac{1}{2}{-}\frac{1}{2 \sqrt{2}}}
\rnode{-1.64552,0.455783}{\frac{1}{2 \sqrt{2}}{-}\frac{1}{2}}
\rnode{-1.64552,-0.455783}{\frac{1}{2}{-}\frac{1}{\sqrt{2}}}
\rnode{1.64552,0.455783}{\frac{1}{\sqrt{2}}{-}\frac{1}{2}}
\rnode{-0.133293,1.87557}{\frac{459}{16}{-}\frac{81}{2 \sqrt{2}}}
\rnode{0.133293,-1.87557}{\frac{81}{2 \sqrt{2}}{-}\frac{459}{16}}
\rnode{-0.133293,-1.87557}{42{-}\frac{119}{2 \sqrt{2}}}
\rnode{0.133293,1.87557}{\frac{119}{2 \sqrt{2}}{-}42}
\rnode{-0.719560,1.73717}{0}
\rnode{0.719560,-1.73717}{0}
\rnode{-0.719560,-1.73717}{\frac{1}{2}{-}\frac{7}{8 \sqrt{2}}}
\rnode{0.719560,1.73717}{\frac{7}{8 \sqrt{2}}{-}\frac{1}{2}}
\rnode{1.23198,-1.42048}{\frac{459}{16}{-}\frac{81}{2 \sqrt{2}}}
\rnode{-1.23198,1.42048}{\frac{81}{2 \sqrt{2}}{-}\frac{459}{16}}
\rnode{1.23198,1.42048}{\frac{45}{2 \sqrt{2}}{-}\frac{63}{4}}
\rnode{-1.23198,-1.42048}{\frac{63}{4}{-}\frac{45}{2 \sqrt{2}}}
\rnode{-1.42048,1.23198}{42{-}\frac{119}{2 \sqrt{2}}}
\rnode{1.42048,-1.23198}{\frac{119}{2 \sqrt{2}}{-}42}
\rnode{1.42048,1.23198}{\frac{35}{2}{-}\frac{49}{2 \sqrt{2}}}
\rnode{-1.42048,-1.23198}{\frac{49}{2 \sqrt{2}}{-}\frac{35}{2}}
\rnode{-1.73717,0.719560}{\frac{1}{2}{-}\frac{7}{8 \sqrt{2}}}
\rnode{1.73717,-0.719560}{\frac{7}{8 \sqrt{2}}{-}\frac{1}{2}}
\rnode{-1.73717,-0.719560}{{-}\frac{3}{16}}
\rnode{1.73717,0.719560}{\frac{3}{16}}
\rnode{1.87557,-0.133293}{\frac{45}{2 \sqrt{2}}{-}\frac{63}{4}}
\rnode{-1.87557,0.133293}{\frac{63}{4}{-}\frac{45}{2 \sqrt{2}}}
\rnode{1.87557,0.133293}{\frac{35}{2}{-}\frac{49}{2 \sqrt{2}}}
\rnode{-1.87557,-0.133293}{\frac{49}{2 \sqrt{2}}{-}\frac{35}{2}}
\rnode{-0.392356,1.91153}{\frac{11}{\sqrt{2}}{-}\frac{31}{4}}
\rnode{0.392356,-1.91153}{\frac{31}{4}{-}\frac{11}{\sqrt{2}}}
\rnode{-0.392356,-1.91153}{\frac{279}{8 \sqrt{2}}{-}\frac{99}{4}}
\rnode{0.392356,1.91153}{\frac{99}{4}{-}\frac{279}{8 \sqrt{2}}}
\rnode{1.07422,-1.62909}{\frac{11}{\sqrt{2}}{-}\frac{31}{4}}
\rnode{-1.07422,1.62909}{\frac{31}{4}{-}\frac{11}{\sqrt{2}}}
\rnode{1.07422,1.62909}{5{-}\frac{55}{8 \sqrt{2}}}
\rnode{-1.07422,-1.62909}{\frac{55}{8 \sqrt{2}}{-}5}
\rnode{-1.62909,1.07422}{\frac{279}{8 \sqrt{2}}{-}\frac{99}{4}}
\rnode{1.62909,-1.07422}{\frac{99}{4}{-}\frac{279}{8 \sqrt{2}}}
\rnode{-1.62909,-1.07422}{\frac{99}{16}{-}\frac{9}{\sqrt{2}}}
\rnode{1.62909,1.07422}{\frac{9}{\sqrt{2}}{-}\frac{99}{16}}
\rnode{1.91153,-0.392356}{5{-}\frac{55}{8 \sqrt{2}}}
\rnode{-1.91153,0.392356}{\frac{55}{8 \sqrt{2}}{-}5}
\rnode{-1.91153,-0.392356}{\frac{99}{16}{-}\frac{9}{\sqrt{2}}}
\rnode{1.91153,0.392356}{\frac{9}{\sqrt{2}}{-}\frac{99}{16}}
\rnode{0.632915,-1.97499}{\frac{1}{\sqrt{2}}{-}\frac{23}{32}}
\rnode{-0.632915,1.97499}{\frac{23}{32}{-}\frac{1}{\sqrt{2}}}
\rnode{-0.632915,-1.97499}{6{-}\frac{69}{8 \sqrt{2}}}
\rnode{0.632915,1.97499}{\frac{69}{8 \sqrt{2}}{-}6}
\rnode{-0.948990,1.84407}{\frac{1}{\sqrt{2}}{-}\frac{23}{32}}
\rnode{0.948990,-1.84407}{\frac{23}{32}{-}\frac{1}{\sqrt{2}}}
\rnode{-0.948990,-1.84407}{\frac{1}{2}{-}\frac{7}{8 \sqrt{2}}}
\rnode{0.948990,1.84407}{\frac{7}{8 \sqrt{2}}{-}\frac{1}{2}}
\rnode{-1.84407,0.948990}{6{-}\frac{69}{8 \sqrt{2}}}
\rnode{1.84407,-0.948990}{\frac{69}{8 \sqrt{2}}{-}6}
\rnode{-1.84407,-0.948990}{\frac{1}{\sqrt{2}}{-}\frac{7}{8}}
\rnode{1.84407,0.948990}{\frac{7}{8}{-}\frac{1}{\sqrt{2}}}
\rnode{-1.97499,0.632915}{\frac{1}{2}{-}\frac{7}{8 \sqrt{2}}}
\rnode{1.97499,-0.632915}{\frac{7}{8 \sqrt{2}}{-}\frac{1}{2}}
\rnode{-1.97499,-0.632915}{\frac{1}{\sqrt{2}}{-}\frac{7}{8}}
\rnode{1.97499,0.632915}{\frac{7}{8}{-}\frac{1}{\sqrt{2}}}
\rnode{-0.851395,2.05545}{0}
\rnode{0.851395,-2.05545}{0}
\rnode{0.851395,2.05545}{\frac{1}{2}{-}\frac{9}{16 \sqrt{2}}}
\rnode{-0.851395,-2.05545}{\frac{9}{16 \sqrt{2}}{-}\frac{1}{2}}
\rnode{2.05545,-0.851395}{\frac{1}{2}{-}\frac{9}{16 \sqrt{2}}}
\rnode{-2.05545,0.851395}{\frac{9}{16 \sqrt{2}}{-}\frac{1}{2}}
\rnode{-2.05545,-0.851395}{{-}\frac{5}{32}}
\rnode{2.05545,0.851395}{\frac{5}{32}}
\end{axis}
\end{tikzpicture}}
\caption{The antisymmetric Green's function $G_{\Xi}^A((1,1)^\beta,\cdot)$.  The source is at $(1,1)^\beta$ shown in red, and the sink is at $(1,1)^{\beta\sigma}$, shown in blue.}
\label{G11face}
\end{figure}

In this section it is convenient to have $D$ start at the origin; let $D_0 = \{(k,k)\;:\;k\le 0\}$ and
$G_{D_0}(v,w) = G_D(v-(1,1),w-(1,1))$ the shifted version of $G_D$.

Let $\Sigma$ be the double cover of the graph $\Z^2$,
branched over the origin. This is a graph with two vertices, edges and faces
over every vertex, edge and face of $\Z^2$ except for a single vertex over the
origin (of degree $8$).  See Figure~\ref{GSA10}.

Let $\Xi$ be the dual of $\Sigma$.  This is the double cover of the graph $(\Z+\frac12)^2$
branched over the face centered at the origin.  See Figure~\ref{G11face}.

Let $\pi:\Sigma\to\Z^2$ and $\pi:\Xi\to(\Z+\frac12)^2$ be the projection.
Each vertex $z\in\Z^2$ (except $z=0$) has two pre-images in $\Sigma$,
and each vertex $z\in(\Z+\frac12)^2$ has two pre-images in $\Xi$.
For $z\in\Z^2\cup(\Z+\frac12)^2$, we let $z^\beta$ denote the principal branch in $\Sigma$ or $\Xi$,
with a branch cut along but just below the negative diagonal, i.e., for adjacent vertices $z_1,z_2\in\Z^2$
or $z_1,z_2\in(\Z+\frac12)^2$,
the vertices $z_1^\beta,z_2^\beta\in\Sigma\cup\Xi$ are adjacent except for pairs of the form
$\{(-k,-k),(-k+1,-k)\}$ and $\{(-k,-k),(-k,-k-1)\}$ with $2k\geq 1$.

Let $\sigma:z\mapsto z^\sigma$ be the map exchanging sheets of the cover $\Sigma$ or $\Xi$,
and $\tau:z\mapsto z^\tau$ the reflection of $\Sigma$ or $\Xi$ in the pre-image of $D_0$.
These operations commute.

The Green's functions $G_\Sigma, G_\Xi$ on $\Sigma$ and $\Xi$ respectively
are defined as in \eqref{Greenlimitdef}, as a limit of the Green's functions for the corresponding double branched covers of $\G_n$.

Since
\begin{align*}
G_\Sigma(v,w)+G_\Sigma(v^{\sigma},w)&=G_{\Z^2}(v^\pi,w^\pi)\\
G_\Xi(v,w)+G_\Xi(v^{\sigma},w)&=G_{(\Z+\frac12)^2}(v^\pi,w^\pi)\,,
\end{align*}
the Green's function on either $\Sigma$ or $\Xi$ can be easily recovered from the
``antisymmetric Green's functions'', which are defined by
\begin{align*}
G^A_\Sigma(v,w) &= G_\Sigma(v,w)-G_\Sigma(v^{\sigma},w)\\
G^A_\Xi(v,w) &= G_\Xi(v,w)-G_\Xi(v^{\sigma},w)\,.
\end{align*}
The antisymmetric Green's functions on $\Sigma$ and $\Xi$ turn out to be nicer than the
usual Green's functions.  We can also interpret $G^A_\Sigma$ as being defined for $v,w\in\Z^2$
and $G^A_\Xi$ as being defined for $v,w\in(\Z+\frac12)^2$
by taking the principal branch.

\begin{figure}[b!]
\centerline{\begin{tikzpicture}[scale=0.9]
\newcommand{\vbr}{3.5}
\newcommand{\vbth}{80}
\fill [gray!20] ({-5*sin(22.5)},{5*cos(22.5)}) -- ({5*cos(22.5)},{5*cos(22.5)}) -- ({5*cos(22.5)},{-5*cos(22.5)}) -- ({5*sin(22.5)},{-5*cos(22.5)});
\draw ({-5*sin(22.5)},{5*cos(22.5)}) -- (0,0);
\draw [dotted] (0,0) -- ({5*sin(22.5)},{-5*cos(22.5)});
\draw [dashed] ({-5*cos(22.5)},{-5*sin(22.5)}) -- ({5*cos(22.5)},{5*sin(22.5)});
\node [rotate=-67.5,anchor=south] at ({-3*sin(22.5)},{3*cos(22.5)}) {preimage of $D_0$};
\node [rotate=-67.5,anchor=south] at ({3*sin(22.5)},{-3*cos(22.5)}) {other preimage of $D_0$};
\node [rotate=22.5,anchor=south] at ({2.6*cos(22.5)},{2.6*sin(22.5)}) {preimage of $-D_0$};
\node [rotate=22.5,anchor=south] at ({-2.6*cos(22.5)},{-2.6*sin(22.5)}) {other preimage of $-D_0$};
\node [rotate=-90,anchor=north] at ({5*cos(22.5)},2) {principal branch};
\filldraw (3,-.5) circle[radius=1pt];
\node [anchor=west] at (3,-.5) {$w$};
\filldraw ({\vbr*cos(\vbth)},{\vbr*sin(\vbth)}) circle[radius=1pt];
\node [anchor=west] at ({\vbr*cos(\vbth)},{\vbr*sin(\vbth)}) {$v$};
\filldraw ({\vbr*cos(225-\vbth)},{\vbr*sin(225-\vbth)}) circle[radius=1pt];
\node [anchor=west] at ({\vbr*cos(225-\vbth)},{\vbr*sin(225-\vbth)}) {$v^{\tau}$};
\filldraw ({-\vbr*cos(\vbth)},{-\vbr*sin(\vbth)}) circle[radius=1pt];
\node [anchor=west] at ({-\vbr*cos(\vbth)},{-\vbr*sin(\vbth)}) {$v^{\sigma}$};
\end{tikzpicture}}
\caption{The method of images in the branched double cover.  We pair $v$ with $v^{\tau}$ to make the preimage of $D_0$ zero and compute $G_{D_0}$.  We pair $v^{\sigma}$ with $v^{\tau}$ to make the preimage of $-D_0$ zero and compute $G_{-D_0}$, but there are two cases depending on whether $w$ is on the same side of the preimage of $-D_0$ as $v^{\tau}$ or $v^{\sigma}$.}
\label{fig:images}
\end{figure}
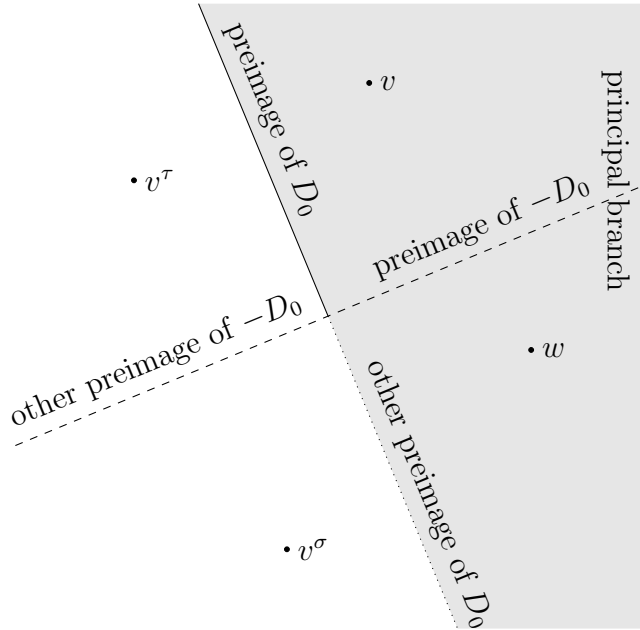

$G^A_\Xi$ can be directly expressed in terms of the Green's function with a zipper.  For $v,w\in\Xi$ we have
\begin{equation} \label{eq:GAP-GZ}
G^A_\Xi(v,w) = G_Z(v^\pi-(\tfrac12,\tfrac12),w^\pi-(\tfrac12,\tfrac12)) \times \begin{cases} +1& \text{$v$ and $w$ on the same branch} \\ -1& \text{$v$ and $w$ on opposite branches} \end{cases}
\end{equation}
$G^A_\Sigma$ can also be expressed in terms of the Green's function with a zipper, but now with Dirichlet boundary conditions at the origin:
For $v,w\in\Sigma$ we have
\begin{equation} \label{eq:GAS-GZ}
G^A_\Sigma(v,w) = \left[G_Z(v^\pi,w^\pi) - \frac{G_Z(v^\pi,0) G_Z(0,w^\pi)}{G_Z(0,0)} \right]\times \begin{cases} +1& \text{$v$ and $w$ on the same branch} \\ -1& \text{$v$ and $w$ on opposite branches} \end{cases}
\end{equation}

For $v,w\in\Z^2$, we can use the method of images in $\Sigma$ (Figure~\ref{fig:images}) to compute $G_{D_0}(v,w)$:
\begin{equation}\label{eq:GD0-GAS}
\begin{aligned}
G_{D_0}(v,w) &= G_\Sigma(v^\beta,w^\beta) - G_\Sigma(v^{\beta\tau},w^\beta) \\
&= \tfrac12 G^A_\Sigma(v^\beta,w^\beta) +\tfrac12 G_{\Z^2}(v,w) - \tfrac12 G^A_\Sigma(v^{\beta\tau},w^\beta) -\tfrac12 G_{\Z^2}(v^{\beta\tau\pi},w)
\end{aligned}
\end{equation}
From this we see that values of $G_{D_0}$ (and hence of $G_D$)
take values in $\Q\oplus \frac{1}{\sqrt{2}}\Q \oplus \frac{1}{\pi}\Q$.

\section{Temperleyan regions with a hole}\label{holesection}

We explain here how to compute the inverse Kasteleyn matrix
for Temperleyan regions with a hole in terms of Green's functions.
For background on the dimer model, the Kasteleyn matrix,
and Temperleyan regions see \cite{Kenyon.lecturenotes}.

\begin{figure}[b!]
\centerline{\includegraphics[width=2.8in]{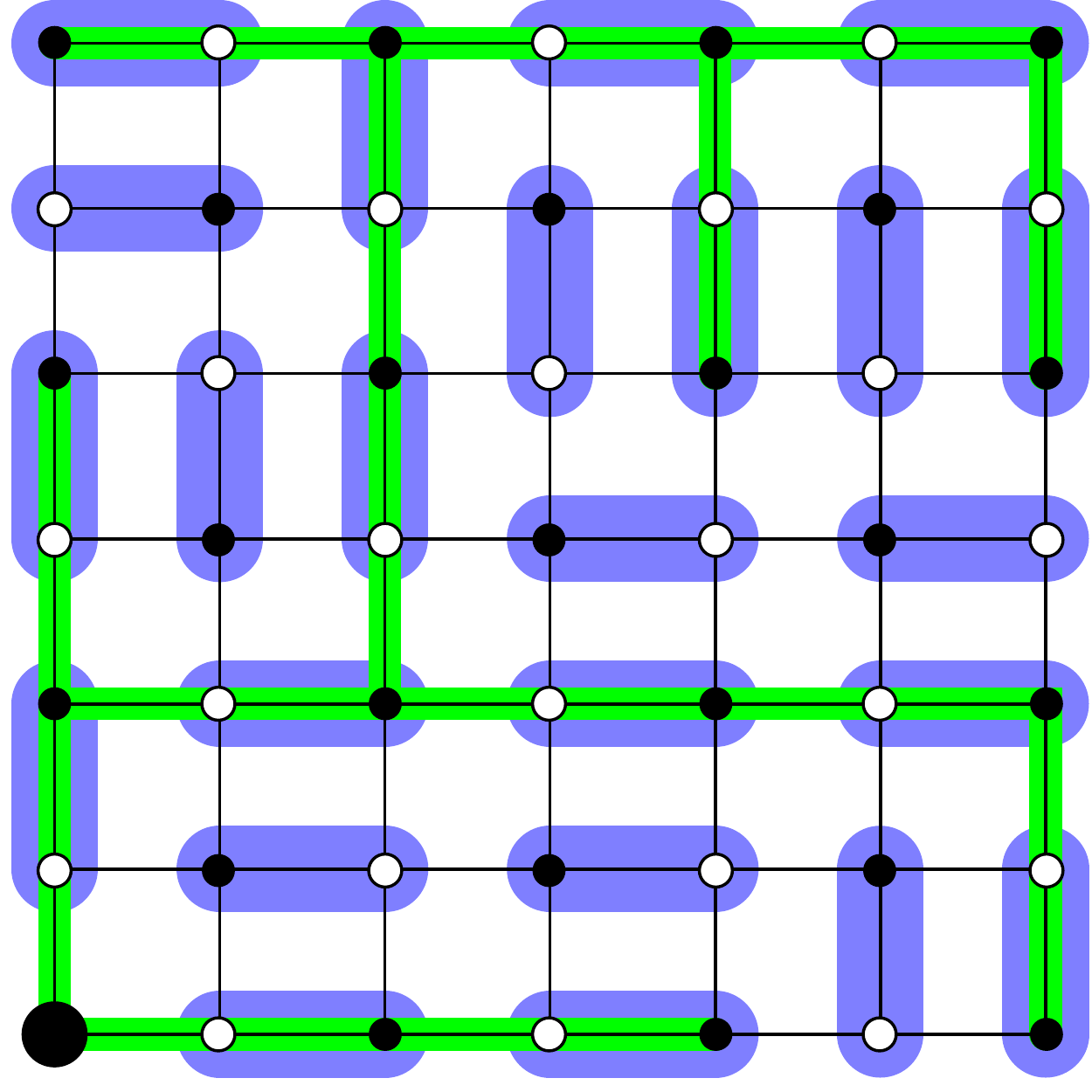}}
\caption{\label{Temptrick}Temperley's bijection. The tree in green (oriented towards the root vertex in the lower left corner)
gives rise to a dimer cover (blue) by laying dimers along the tree edges
when oriented towards the root.}
\end{figure}

We first recall the Temperley's bijection (see \cite{temperley:tree} for the square grid and \cite{KPW} for general planar graphs), between spanning trees of a planar
graph and dimer coverings of an associated graph, illustrated in Figure~\ref{Temptrick}.

\begin{figure}[t]
\centerline{\begin{tikzpicture}[scale=0.75]
\foreach \x in {-2,...,2} {
  \draw [gray,thick] (2*\x,-4) -- (2*\x,4);
  \draw [gray,thick] (-4,2*\x) -- (4,2*\x);
};
\foreach \x in {-2,...,2} {
\foreach \y in {-2,...,2} {
  \filldraw [blue!80!green] ({2*\x},{2*\y}) circle[radius=4pt];
}
};
\node at (0,-6) {$\G$};
\begin{scope}[shift={(12.5,0)}]
\foreach \x in {-2,...,2} {
  \draw [gray,thick] (2*\x,-4) -- (2*\x,4);
  \draw [gray,thick] (-4,2*\x) -- (4,2*\x);
};
\foreach \x in {-2,...,1} {
  \draw [gray,thick] (2*\x+1,-5) -- (2*\x+1,5);
  \draw [gray,thick] (-5,2*\x+1) -- (5,2*\x+1);
};
\foreach \x in {-2,...,2} {
\foreach \y in {-2,...,2} {
  \filldraw [blue!80!green] (2*\x,2*\y) circle[radius=4pt];
}
};
\foreach \x in {-2,...,1} {
\foreach \y in {-2,...,1} {
  \filldraw [red] (2*\x+1,2*\y+1) circle[radius=4pt];
}
};
\foreach \x in {-2,...,1} {
\foreach \y in {-2,...,2} {
  \filldraw [black] (2*\x+1,2*\y) circle[radius=2pt];
  \filldraw [black] (2*\y,2*\x+1) circle[radius=2pt];
}
};
\draw [red, line width=6pt] (-5,-5) -- (5,-5) -- (5,5) -- (-5,5) -- cycle;
\node at (0,-6) {$\G^+$};
\end{scope}
\end{tikzpicture}}
\caption{The graph $\G^+$ is formed from $\G$ (blue) and its dual $\G^\dagger$ (red).}
\label{fig:G+}
\end{figure}
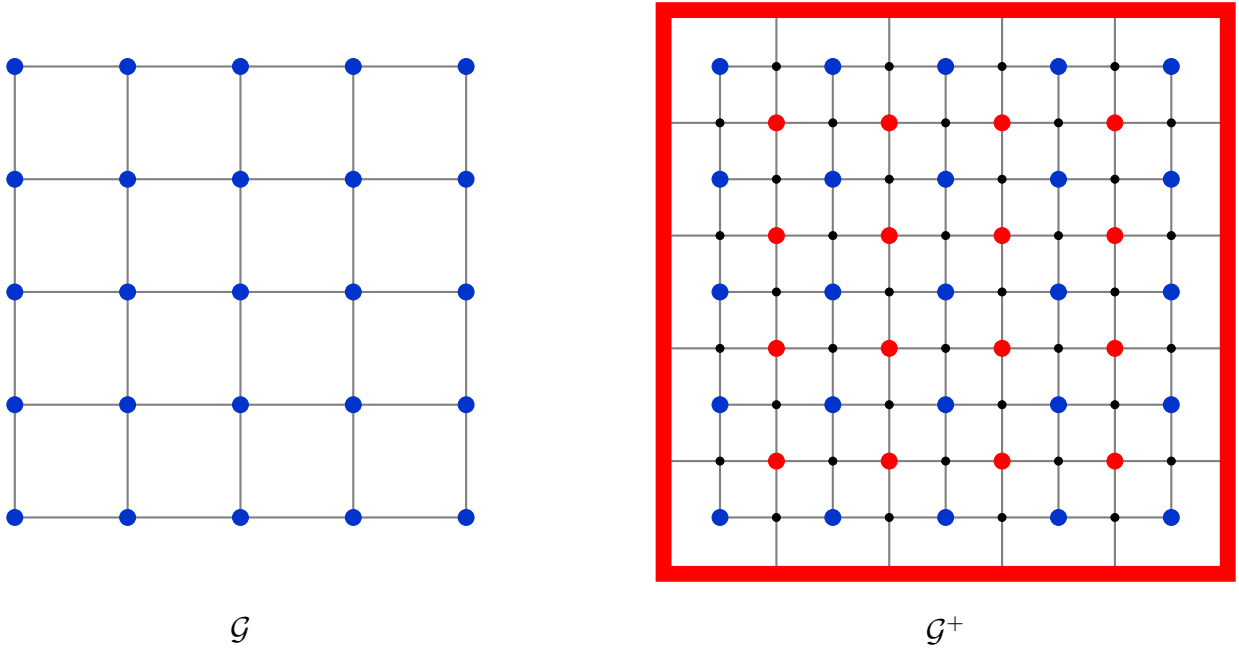

Let $\G$ be a finite
graph embedded in the plane with vertex set $V$, face set $F$, and edge set $E$.
Let $\G^+$ be the graph (embedded in the plane) whose
vertices are $V\cup F\cup E$, with an edge between $e\in E$ and $v\in V$ when
$v$ is an endpoint of $e$ in $\G$, with an edge between $e\in E$ and $f\in F$ when
$e$ bounds the face $f$ in $\G$, and no other edges (Figure~\ref{fig:G+}).
The graph $\G^+$ is bipartite, with parts $V\cup F$ and $E$.
By Euler's formula, $|V|+|F|=|E|+2$.
If $v_0$ and $f_0$ are a distinguished vertex and face of $\G$, and
$v_0$ is incident to $f_0$, then the spanning trees of $\G$ are in
bijective correspondence to the perfect matchings of
$\G^+\setminus\{v_0,f_0\}$.  If $v_0$ and $f_0$ are not incident, then
there is still an injective map from spanning trees of $\G$ to perfect
matchings of $\G^+\setminus\{v_0,f_0\}$, but it is not surjective in
general.  For further background see \cite{KPW}.
We will be interested in regions with holes, so we do not
generally require $v_0$ and $f_0$ to be incident to each other.

\newcommand{\gridh}{
\foreach \x in {1,...,4} {
  \draw [gray,thick] (\x,-4) -- (\x,4);
  \draw [gray,thick] (-4,\x) -- (4,\x);
  \draw [gray,thick] (-\x,-4) -- (-\x,4);
  \draw [gray,thick] (-4,-\x) -- (4,-\x);
};
\draw [gray,thick] (-4,0) -- (-1,0);
\draw [gray,thick] (+4,0) -- (+1,0);
\draw [gray,thick] (0,-4) -- (0,-1);
\draw [gray,thick] (0,+4) -- (0,+1);
}
\begin{figure}[t]
\centerline{\begin{tikzpicture}[scale=1]
\gridh
\begin{scope}[every node/.style={scale=0.65}]
\foreach \x in {-4,...,4} {
\foreach \y in {-4,...,4} {
\pgfmathsetmacro\yxdiff{\y-\x}
\ifthenelse {1<0 \OR \isodd{\yxdiff}} {
  \ifthenelse {\y<4 \AND \NOT \(\x=0\AND \y=-1\)}  {\node [anchor=west] at (\x,\y+0.5) {$i$};}{}
  \ifthenelse {\y>-4\AND \NOT \(\x=0\AND\y=1\)} {\node at (\x,\y-0.5) {$-i$};}{}
  \ifthenelse {\x>-4\AND \NOT \(\y=0\AND\x=1\)} {\node [anchor=south] at (\x-0.5,\y) {$-1$};}{}
  \ifthenelse {\x<4 \AND \NOT \(\y=0\AND\x=-1\)} {\node [anchor=south] at (\x+0.5,\y) {$1$};}{}
} {}
}
};
\foreach \y in {-4,...,-1} {\node [red, scale=1.2, fill=white, anchor=south] at (\y+0.5,\y) {$+1$};};
\draw [red,thick] (-0.5,-0.5) -- (-0.5,-1);
\foreach \x in {-3,...,-1} {
  \draw [red,thick] (\x+0.5,\x) arc (0:-90:0.5) arc (90:180:0.5);
};
\draw [red,thick] (-3.5,-4) arc (0:-45:0.5);
\foreach \x in {-3,...,-1} {\node [red, scale=1.2, fill=white] at (\x,\x-0.5) {$-i$};};
\end{scope}
\foreach \x in {1,...,2} {
\foreach \y in {0,...,2} {
  \filldraw [blue!80!green] (2*\x,2*\y) circle[radius=3pt];
  \filldraw [blue!80!green] (-2*\y,2*\x) circle[radius=3pt];
  \filldraw [blue!80!green] (-2*\x,-2*\y) circle[radius=3pt];
  \filldraw [blue!80!green] (2*\y,-2*\x) circle[radius=3pt];
}
};
  \node [blue!80!green] at (0,0) {$(v_0)$};
\foreach \x in {-2,...,1} {
\foreach \y in {-2,...,1} {
  \filldraw [red] (2*\x+1,2*\y+1) circle[radius=3pt];
}
};
  \node [red] at (-3.6,-4.6) {$(f_0)$};
\foreach \x in {-2,...,1} {
\foreach \y in {-2,...,2} {
  \filldraw [black] (2*\x+1,2*\y) circle[radius=2pt];
  \filldraw [black] (2*\y,2*\x+1) circle[radius=2pt];
}
};
\end{tikzpicture}}
\caption{\label{Kastsignshole}The Kasteleyn signs of $\G^+\setminus\{v_0,f_0\}$ when $f_0$ and $v_0$ are not incident to each other.}
\end{figure}
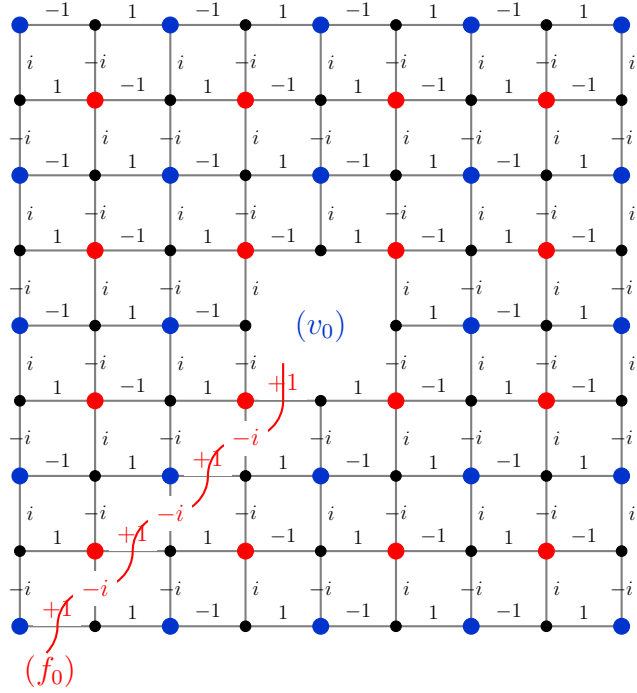

We can make a bipartite Kasteleyn matrix for
$\G^+\setminus\{v_0,f_0\}$ as follows.  Each edge-type vertex $e^+$ of
$\G^+$ is incident to four edges in $\G^+$ which are cyclically
ordered since they are embedded in the plane.  For an arbitrary
complex number $w_e\in\C$ of unit modulus $|w_e|=1$, we assign the weights
$w_e$, $i w_e$, $-w_e$, $-i w_e$ in cyclic order to the edges incident to $e^+$.
(Weights of this type were used in \cite{Kenyon.confinv} for the square grid.)
Every face of $\G^+$ has four sides, and it is easily verified that each face
is ``Kasteleyn-flat''.  (In a signed bipartite graph, a face with $\delta$ edges is Kasteleyn-flat if the
product of signs of every other edge on the boundary equals the product of the other signs
times $-(-1)^{\delta/2}$.)  If $v_0$ and $f_0$ are incident, we can simply delete them,
and the resulting weighted graph remains Kasteleyn-flat, i.e., it gives a valid Kasteleyn matrix.
If $v_0$ and $f_0$ are not incident, we connect them by a path
that avoids vertices of $\G^+$; each time the path crosses an edge of $\G^+$, we change its sign. See Figure~\ref{Kastsignshole}.
Upon removing $v_0$ and $f_0$, the resulting weighted graph is Kasteleyn-flat, giving
a valid Kasteleyn matrix $K$.

We think of the matrix $K$ as mapping functions on $(V\cup F)\setminus\{v_0,f_0\}$ to functions on $E$
($K$ has rows indexed by $E$).
Let $\bar K$ be the conjugate of $K$, and let $K^*=\bar K^T$ be the conjugate transpose of $K$.

Let
$\Delta_{\G,v_0}^{v_0\leftrightsquigarrow f_0}$ be the Laplacian on
$\G$ with Dirichlet boundary at $v_0$ and a zipper on the path from $v_0$ to $f_0$, where edges on the zipper
have a weight of $-1$, and
similarly define
$\Delta_{\G^\dagger,f_0}^{v_0\leftrightsquigarrow f_0}$ to be the Dirichlet Laplacian on the dual $\G^\dagger$ of $\G$ with a zipper on the path from $v_0$ to $f_0$.

A key property of the above choice of weights is that $K^* K$, when restricted to functions on $V$, is just
$\Delta_{\G,v_0}^{v_0\leftrightsquigarrow f_0}$, and when $K^* K$ is restricted to functions on $F$, it is
$\Delta_{\G^\dagger,f_0}^{v_0\leftrightsquigarrow f_0}$.
In other words,
\[
 K^* K = K^T \bar K =  \Delta_{\G,v_0}^{v_0\leftrightsquigarrow f_0} \oplus \Delta_{\G^\dagger,f_0}^{v_0\leftrightsquigarrow f_0}\,.
\]

Let
\[
G_{\G,v_0}^{v_0\leftrightsquigarrow f_0} =
\left(\Delta_{\G,v_0}^{v_0\leftrightsquigarrow f_0}\right)^{-1}
\]
be the Dirichlet Green's function on $\G$ with boundary at $v_0$ and the zipper ${v_0\leftrightsquigarrow f_0}$.
Equivalently, this the antisymmetric Green's function on the branched double cover of $\G$, branched on the path $v_0\leftrightsquigarrow f_0$, with Dirichlet boundary at $v_0$.  Define $G_{\G,v_0}^{v_0\leftrightsquigarrow f_0}$ similarly.
Then
\begin{equation} \label{eq:K-inv-from-G}
K^{-1} = \left(\bar K G_{\G,v_0}^{v_0\leftrightsquigarrow f_0}\right)^T \oplus \left(\bar K G_{\G^\dagger,f_0}^{v_0\leftrightsquigarrow f_0}\right)^T
\end{equation}
In other words, $K^{-1}$ evaluated at $v\in V$ and edge $e\in E$ is (up to complex sign) the current flowing across edge $e$
in the branched double-cover of $\G$ (with Dirichlet boundary at $v_0$) when current is inserted at $v$ in one branch and extracted at $v$ in the other branch,
and $K^{-1}$ at $f\in F$ and $e\in E$ is interpreted similarly for the dual graph $\G^\dagger$.

When $v_0$ and $f_0$ are incident to each other, as in the standard Temperley bijection,
the zipper may be chosen to cross no edges at all, in which case \eqref{eq:K-inv-from-G} expresses
$K^{-1}$ in terms of the Dirichlet Green's function on $\G$ itself.  This relation was used in \cite{Kenyon.confinv}
to express $K^{-1}$ on $\Z^2$ (with Temperleyan boundary conditions)
in terms of the Green's function on $\Z^2$.

\section{The uniform spanning tree trunk}\label{trunksection}

\subsection{From trees to dimers}

Temperley's bijection between trees and dimers was extended in \cite[Lemma~17]{kenyon:det-lap} to a bijection
between dimer covers of regions in $\Z^2$ with a single hole, and spanning trees of associated regions in which a specified
branch goes through the corresponding edge. See Figure~\ref{fig:temperley-annulus} for the case we are interested in here:
Let $\H$ be the $(2n-1)\times(2n-1)$ grid graph with a single hole at the center.
Let $\G$ be the  $(n-1)\times n$ grid graph (with $n-1$ columns and $n$ rows) in which the left and right boundaries
have been wired, in the sense that we add two more vertices $L$ and $R$,
$L$ being connected to all vertices on $\G$'s left boundary
and $R$ being connected to all vertices on $\G$ right boundary.
Then dimer covers of $\H$ correspond
to spanning trees of $\G$ in which the unique tree path connecting $L$ and $R$ passes through the horizontal edge at the center.

\begin{figure}[h]
\newcommand{\bdimer}{\draw[purple!50!blue, line width=4pt]}
\newcommand{\odimer}{\draw[yellow!40!orange, line width=4pt]}
\newcommand{\dimerLength}{1}
\newcommand{\bdu}[2]{\bdimer (#1,#2)--(#1,#2+\dimerLength);}
\newcommand{\bdd}[2]{\bdimer (#1,#2)--(#1,#2-\dimerLength);}
\newcommand{\bdr}[2]{\bdimer (#1,#2)--(#1+\dimerLength,#2);}
\newcommand{\bdl}[2]{\bdimer (#1,#2)--(#1-\dimerLength,#2);}
\newcommand{\odu}[2]{\odimer (#1,#2)--(#1,#2+\dimerLength);}
\newcommand{\odd}[2]{\odimer (#1,#2)--(#1,#2-\dimerLength);}
\newcommand{\odr}[2]{\odimer (#1,#2)--(#1+\dimerLength,#2);}
\newcommand{\odl}[2]{\odimer (#1,#2)--(#1-\dimerLength,#2);}
\newcommand{\dimers}{\bdr{1}{0} \bdu{3}{0} \bdr{3}{2} \bdu{1}{2} \bdr{1}{4} \bdd{3}{4} \bdl{1}{-2} \bdl{-1}{-2} \bdd{-3}{-2} \bdl{-3}{-4} \bdu{-1}{-4} \bdr{1}{-4} \bdu{3}{-4} \bdr{3}{-2} \bdu{-1}{0} \bdl{-1}{2} \bdl{-3}{2} \bdu{-3}{0} \bdd{-3}{4} \bdr{-1}{4} \odd{-2}{-3} \odu{-4}{3} \odd{2}{3} \odl{2}{1} \odu{0}{1} \odl{0}{3} \odu{-2}{3} \odd{-4}{1} \odr{-4}{-1} \odr{-2}{-1} \odr{0}{-1} \odd{2}{-1} \odl{2}{-3} \odd{0}{-3} \odd{4}{-3} \odd{4}{1} \odl{4}{-1} \odu{4}{3} \odu{-4}{-3} \odd{-2}{1}}
\newcommand{\vfs}{
\foreach \x in {-1,...,2} {
\foreach \y in {-2,...,2} {
  \filldraw [purple!50!blue] ({2*\x-1},{2*\y}) circle[radius=4pt];
  \filldraw [yellow!40!orange] ({2*\y},{2*\x-1}) circle[radius=4pt];
}
};
}
\newcommand{\oddxodd}{\begin{tikzpicture}[scale=0.75]
\gridh
\dimers
\vfs
\end{tikzpicture}}
\centerline{\begin{tikzpicture}[scale=0.72]
\gridh\dimers\vfs
\begin{scope}[shift={(12.5,0)}]
\renewcommand{\dimerLength}{2}
\fill[fill=gray!20!white]
(-5,-4) -- (-5,4) -- (-4,5) -- (4,5) -- (5,4) -- (5,-4) -- (4,-5) -- (-4,-5) -- cycle
(1,0) -- (0,1) -- (-1,0) -- (0,-1) -- cycle;
\gridh\dimers\vfs
\draw [purple!50!blue, line width=6pt, line cap=round] (-5,-4)--(-5,4);
\draw [purple!50!blue, line width=6pt, line cap=round](5,-4)--(5,4);
\draw [yellow!40!orange, line width=6pt, line cap=round] (-4,-5)--(4,-5);
\draw [yellow!40!orange, line width=6pt, line cap=round] (-4,5)--(4,5);
\end{scope}
\end{tikzpicture}}
\caption{Version of Temperley's bijection for an odd by odd square grid with a hole at the center.}
\label{fig:temperley-annulus}
\end{figure}

The dimer statistics on $\H$ are determined by the inverse Kasteleyn matrix \cite{Kenyon.localstats}, which is notoriously sensitive to boundary
conditions \cite{Kenyon.lecturenotes}.  But here we can use a special feature of this set-up, that
the graph $\H$ of the dimer system in Figure~\ref{fig:temperley-annulus}
is identical to the graph in Figure~\ref{Kastsignshole}.
Using formula \eqref{eq:K-inv-from-G} we can compute the inverse Kasteleyn matrix of
$\H$ in terms of the Green's functions for the
graph and dual graph in Figures~\ref{fig:G+} and~\ref{Kastsignshole} with a zipper connecting the center to the boundary.
These finite-graph Green's functions with a zipper converge to
the Green's function on $\Z^2$ with a zipper as long as the distance between
the boundary of the region and the origin tends to infinity.
As a consequence the inverse Kasteleyn matrix for $\H$ converges near the origin as $n\to\infty$, and therefore the local statistics near the origin
for the ``UST trunk'' on $\G$ converge.

Here by ``local statistics'' we mean the probabilities of cylinder events.
A somewhat stronger statement holds, since the dimer local statistics determine the local statistics of the \emph{directed\/}
spanning tree (directed, say, from $L$ to $R$) and directed dual tree.

\begin{theorem} \label{thm:trunk-measure-dimers}
Let $\G_{M,N}$ be the $M\times N$ grid, containing a horizontal edge $e_0$.
Wire the left and right sides of $\G_{M,N}$ to vertices $L$ and $R$ respectively.
Let $T$ be a uniform spanning tree, conditioned on the path within the tree from $L$ to $R$ to pass
through $e_0$.  From every vertex there is a unique path in $T$ leading to one of $\{L,R\}$ that avoids $e_0$;
orient the tree
edge in this direction, and similarly orient the dual tree edges towards the free boundary components while avoiding the dual of $e_0$.
Then the measure on the directed tree and dual tree edges converges as
the distance from $e_0$ to the boundaries of the box go to infinity.
The limiting measure
is determinantal with kernel given by \eqref{eq:K-inv-from-G}  with $G_{\G,v_0}^{v_0\leftrightsquigarrow f_0}, G_{\G^\dagger,f_0}^{v_0\leftrightsquigarrow f_0}$ there replaced by $G_\Sigma^A$ and $G_\Xi^A$.
\end{theorem}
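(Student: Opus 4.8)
The plan is to transfer the question to the dimer model via the hole-version of Temperley's bijection, use the determinantal structure of the dimer model together with formula \eqref{eq:K-inv-from-G} for the inverse Kasteleyn matrix, and then pass to the limit by showing that the finite-graph zipper Green's functions converge, on a fixed window around the hole, to the antisymmetric double-cover Green's functions $G^A_\Sigma$ and $G^A_\Xi$ of Section~\ref{branchedcoversection}.

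First I would set up the correspondence. Letting $\H$ be the grid with a single central hole associated to $\G=\G_{M,N}$ as in Figure~\ref{fig:temperley-annulus}, the hole-Temperley bijection of \cite[Lemma~17]{kenyon:det-lap} identifies the spanning trees of the wired grid $\G$ conditioned to route the $L$-$R$ path through $e_0$ with the dimer covers of $\H$; this bijection is local, sending a tree edge oriented toward $\{L,R\}$ (and away from $e_0$) to a dimer on the corresponding tree-edge of $\H$, and a directed dual-tree edge to a dimer on the corresponding dual-edge. Hence every cylinder event for the directed tree or dual tree pulls back to the event that a fixed finite set of edges of $\H$ lies in the dimer cover, and it suffices to show these dimer probabilities converge, as the distances from $e_0$ to the boundaries of $\G_{M,N}$ tend to infinity, to the determinant predicted by the claimed kernel.

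Since $\H$ is exactly the graph $\G^+\setminus\{v_0,f_0\}$ of Figure~\ref{Kastsignshole}, with $v_0$ the central vertex, $f_0$ a boundary face, and the zipper running from $v_0$ to $f_0$, Kenyon's local statistics formula \cite{Kenyon.localstats} gives that the dimer measure on $\H$ is determinantal with kernel $K^{-1}$, and \eqref{eq:K-inv-from-G} writes $K^{-1}=\bigl(\bar K\,G_{\G,v_0}^{v_0\leftrightsquigarrow f_0}\bigr)^{T}\oplus\bigl(\bar K\,G_{\G^\dagger,f_0}^{v_0\leftrightsquigarrow f_0}\bigr)^{T}$ in terms of the finite-graph Dirichlet-plus-zipper Green's functions. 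Each entry of $K^{-1}$ attached to an edge near the center is a two-term (hence fixed, finite) combination with unit-modulus coefficients of values of these Green's functions between vertices or faces near the center, so convergence of the kernel on a window reduces to convergence of the zipper Green's functions there. For the primal factor, $G_{\G,v_0}^{v_0\leftrightsquigarrow f_0}$ excludes its pole at the center and has sign-reversed conductances along a path to the boundary; using the zipper-deformation identity of Section~\ref{sec:move-zipper} one may arrange the zipper to occupy a fixed configuration on the window while its remaining tail runs to the boundary, so on the window the finite-graph value is a fixed signed combination of ordinary finite-box Dirichlet Green's function values, which converge by \eqref{Greenlimitdef} and the standard recursions for the Green's function on $\Z^2$ \cite{mccrea-whipple}. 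The limit is the whole-plane Green's function with Dirichlet boundary at the origin and a zipper to infinity, which by \eqref{eq:GAS-GZ} is exactly $G^A_\Sigma$, the correction term there encoding the Dirichlet-at-origin normalization. For the dual factor, $f_0$ lies on the receding boundary, so in the limit the Dirichlet condition is lost and one obtains the pure whole-plane zipper Green's function on $(\Z+\tfrac12)^2$, namely $G^A_\Xi$ by \eqref{eq:GAP-GZ} (no correction term, consistent with the absence of a Dirichlet constraint). Substituting these limits into \eqref{eq:K-inv-from-G} yields the asserted kernel, and since the kernel entries converge and each cylinder probability is a fixed finite determinant in them, the probabilities converge to the stated determinantal measure.

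The main obstacle is this last convergence: proving that the finite-box Dirichlet-plus-zipper Green's functions converge on compact windows and that the limit is \emph{precisely} $G^A_\Sigma\oplus G^A_\Xi$ rather than merely some zipper Green's function. Two points need care. First, the zipper deformation of Section~\ref{sec:move-zipper} must be made quantitative enough to guarantee that, for large boxes, sliding the far end of the zipper toward the boundary changes only signs, and not which vertices of the window enter the Green's function; this uses the explicit ``adjoin a zipper edge'' computation together with the far end remaining far from the window. Second, the Kasteleyn weights $w_e, iw_e, -w_e, -iw_e$ and the sign changes across the zipper must be chosen in a gauge consistent along the sequence of graphs $\H$, so that $\bar K$ and $K^{-1}$ genuinely converge entrywise; any two gauge choices differ by conjugation of the kernel and give the same probabilities, so a fixed choice is harmless. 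A minor check, immediate from \cite[Lemma~17]{kenyon:det-lap}, is that the hole-Temperley bijection is local, so that directed-tree cylinder events really do pull back to fixed finite dimer events.
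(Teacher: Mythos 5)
Your proposal follows essentially the same route as the paper's own argument (which is the discussion preceding the theorem): the hole version of Temperley's bijection from \cite[Lemma~17]{kenyon:det-lap}, the identification of $\H$ with the graph of Figure~\ref{Kastsignshole} so that \eqref{eq:K-inv-from-G} expresses $K^{-1}$ through the finite-graph zipper Green's functions, and the convergence of those Green's functions to the whole-plane zipper Green's functions, identified with $G^A_\Sigma$ and $G^A_\Xi$ via \eqref{eq:GAS-GZ} and \eqref{eq:GAP-GZ}. The extra care you flag about the convergence step and gauge consistency is reasonable but is treated at the same (brief) level in the paper itself, so there is no substantive divergence.
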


The existence of the limiting measure also follows from work of Lawler \cite{lawler:pr-edge}, and is made more explicit in \cite{lawler:trunk-measure}.

\subsection{Example calculations}
We illustrate with an example calculation.  We can for instance
compute the probability that the trunk turns left at $(1,0)$ and has
two subtrees attached to it.  This event is equivalent to the
existence of dimers at $(1,0)(1,1)$, $(3,0)(2,0)$, and
$(1,-2)(1,-1)$.
The probability of this event is given by the following $3\times 3$ subdeterminant of $K^{-1}$,
where the rows are indexed by the even-index vertices within the three dimers
and the columns are indexed by the odd-index vertices:
\[
\det \begin{bmatrix}
K^{-1}_{(1,1),(1,0)} & K^{-1}_{(1,1),(3,0)} & K^{-1}_{(1,1),(1,-2)} \\
K^{-1}_{(2,0),(1,0)} & K^{-1}_{(2,0),(3,0)} & K^{-1}_{(2,0),(1,-2)} \\
K^{-1}_{(1,-1),(1,0)} & K^{-1}_{(1,-1),(3,0)} & K^{-1}_{(1,-1),(1,-2)}
\end{bmatrix}
\times K_{(1,0),(1,1)} K_{(3,0),(2,0)} K_{(1,-2),(1,-1)}
\]
Since $(1,1)$ and $(1,-1)$ are of type (odd, odd), their rows are obtained from differences of $G_\Xi^A$,
and since $(2,0)$ is of type (even, even), its row is obtained from differences in $G_\Sigma^A$.
For instance,
the $(1,1),(1,0)$ matrix entry is
\[ K^{-1}_{(1,1),(1,0)} = -i G_\Xi^A\big(\tfrac{(1,1)}{2},\tfrac{(1,1)}{2}\big) +i G_\Xi^A\big(\tfrac{(1,1)}{2},\tfrac{(1,-1)}{2}\big) = -i\left(\frac{1}{2}\right) + i \left(-\frac12+\frac{1}{\sqrt{2}}\right) = -i + \frac{i}{\sqrt{2}},\]
and
the $(2,0),(1,-2)$ entry is
\[ K^{-1}_{(2,0),(1,-2)} = G_\Sigma^A\big(\tfrac{(2,0)}{2},\tfrac{(2,-2)}{2}\big) - G_\Sigma^A\big(\tfrac{(2,0)}{2},\tfrac{(0,-2)}{2}\big) = \left(-\frac{1}{2}+\frac{1}{\sqrt{2}}\right) - \left(\frac{3}{2}-\sqrt{2}\right) = -2 + \frac{3}{\sqrt{2}}. \]
The rest of the matrix is computed in the same fashion, and is
\[
\begin{gathered}\bordermatrix[{[]}]{
      & (1,0) &(3,0)& (1,-2)\\
(1,1) & \frac{i}{\sqrt{2}}-i & i \sqrt{2}-\frac{3 i}{2} & i \sqrt{2}-\frac{3 i}{2} \\
(2,0) & \sqrt{2}-1 & 2 \sqrt{2}-3 & \frac{3}{\sqrt{2}}-2 \\
(1,-1)& i-\frac{i}{\sqrt{2}} & \frac{3 i}{2}-i \sqrt{2} & \frac{i}{2}-\frac{i}{\sqrt{2}} \\
}
\end{gathered}
\]
and upon taking the determinant, we find that the probability is
\[
\frac{5}{2} - \frac{7}{2 \sqrt{2}}\,.
\]

Similarly, the probability that the trunk continues straight at $(1,0)$ and has
two subtrees attached to it is given by
\[
\det \begin{bmatrix}
K^{-1}_{(2,0),(1,0)} & K^{-1}_{(2,0),(1,2)} & K^{-1}_{(2,0),(1,-2)} \\
K^{-1}_{(1,1),(1,0)} & K^{-1}_{(1,1),(1,2)} & K^{-1}_{(1,1),(1,-2)} \\
K^{-1}_{(1,-1),(1,0)} & K^{-1}_{(1,-1),(1,2)} & K^{-1}_{(1,-1),(1,-2)}
\end{bmatrix}
\times K_{(1,0),(2,0)} K_{(1,2),(1,1)} K_{(1,-2),(1,-1)} = \frac{5}{\sqrt{2}} - \frac{7}{2}\,.
\]

By combining the last two probabilities, we see that the probability that vertex $(1,0)$ (i.e., a typical vertex on the trunk) has degree 4 is $3/2 - \sqrt{2}$.
The probability that vertex $(1,0)$ has degree $2$ or $3$ can be computed in the same manner using a finite sum of dimer cylinder events,
and these probabilities are $1/2$ and $\sqrt{2}-1$ respectively.
Another interesting event is the probability that the trunk continues straight at vertex $(1,0)$, which is $\sqrt{2}-1$.

\subsection{Geometric distribution for straight runs}

Suppose that the spanning tree trunk contains a straight sequence of $k$ edges.
The probability that the trunk continues straight for a $k+1$-st edge is $\sqrt{2}-1$.
More formally,

\begin{theorem} \label{thm:square-geometric}
In the spanning tree trunk measure for the square lattice,
the probability that the trunk contains
the path $(1,0)(3,0)\cdots(2k+1,0)$ is $(\sqrt{2}-1)^k$.
\end{theorem}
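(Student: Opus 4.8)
The plan is to deduce the theorem from the single‑step value $\Pr[E_1]=\sqrt2-1$ already obtained in the Example calculations above (the ``trunk continues straight at $(1,0)$'' probability), by showing that the straight‑run event $E_k$ -- that the trunk contains $(1,0)(3,0)\cdots(2k+1,0)$ -- satisfies $\Pr[E_{k+1}]=(\sqrt2-1)\,\Pr[E_k]$ for every $k\ge0$, with $\Pr[E_0]=1$.

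First I would translate $E_k$ into a dimer event. By the Temperley--annulus bijection of \fref{fig:temperley-annulus} (\cite[Lemma~17]{kenyon:det-lap}), a spanning tree of $\G$ whose trunk runs straight through the $k$ edges $(2i-1,0)(2i+1,0)$, $i=1,\dots,k$, corresponds to a dimer cover of $\H$ containing a specific set of $k$ dimers $d_1,\dots,d_k$ lying along the horizontal axis, each a lattice translate of the next; these are all of $V$--$E$ type, so by \tref{thm:trunk-measure-dimers} and \eqref{eq:K-inv-from-G} the entries of the associated $k\times k$ matrix are explicit finite linear combinations of values of $G^A_\Sigma$ near the axis. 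Hence $\Pr[E_k]=\det\!\big[K^{-1}_{w_i,b_j}\big]_{i,j=1}^{k}\cdot\prod_{i=1}^k K_{b_i,w_i}$, so that $\Pr[E_{k+1}]/\Pr[E_k]$ is (the extra Kasteleyn factor $K_{b_{k+1},w_{k+1}}$ times) the Schur complement obtained by bordering the $k\times k$ matrix with the $(k{+}1)$‑st row and column; this is the quantity I must show equals $\sqrt2-1$.

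The conceptually cleanest way to see that this ratio is constant is a ``slide the hole'' argument. The set of trees with trunk through $e_0$ that then continues straight through $(1,0),\dots,(2k+1,0)$ is literally the same set as the set of trees with trunk through the translated edge $(2k+1,0)(2k+3,0)$ whose trunk runs straight back through those same vertices; under the Temperley bijection this identifies the event ``$d_1,\dots,d_k$ present'' in the limiting dimer measure for $\H$ (hole at the origin) with the corresponding event for the limiting measure whose hole has been translated $k$ steps along the axis. Since the thermodynamic limit of the dimer measure is translation invariant (the boundary of the box recedes to infinity), these two conditioned measures agree near the $(k{+}1)$‑st edge, giving $\Pr[E_{k+1}\mid E_k]=\Pr[E_1]=\sqrt2-1$, and the induction then yields $\Pr[E_k]=(\sqrt2-1)^k$. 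Equivalently, one can prove the same ratio at the level of the determinant: show that the bordered $(k{+}1)\times(k{+}1)$ matrix is triangular up to the Kasteleyn phases, so that its Schur complement collapses to a single normalized entry equal to the $k=1$ value $\sqrt2-1$. This would use the explicit half‑plane data of \sref{GDsection} -- the generating function $V(z)=1/\sqrt{1-z^2}$ from \eqref{eq:C,V-Z2}, the vertical recurrence \eqref{eq:GH-vertical-recurrence}, the self‑duality relation \eqref{eq:GH-off-vertical}, and \eqref{GH(-1,1)} -- together with \eqref{eq:GAS-GZ} to pass from $G^A_\Sigma$ to the zipper Green's function along the axis.

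The main obstacle is making the ``slide the hole'' step rigorous: one must check that conditioning the limiting dimer cover of $\H$ on the $k$ straight dimers really does reproduce the Temperley bijection for the hole‑shifted region -- not merely that $k$ dimers are forced, but that the induced correspondence of configurations matches after the translation -- and one must track edge orientations carefully, since the tree edges are oriented away from $e_0$, so the dimer at $e_i$ sits at its $e_0$‑ward end, and under the slide this end must be matched correctly. If instead one pursues the determinantal route, the obstacle migrates to establishing that the $k\times k$ matrix is (triangular) Toeplitz rather than Toeplitz‑plus‑low‑rank: $K^{-1}$ is \emph{not} translation invariant along the axis, because the branch point at the origin breaks it, so the multiplicative structure only emerges after the cancellations supplied by the Dirichlet/zipper identities of \sref{GDsection} and the vanishing of $G_H$ on the negative axis.
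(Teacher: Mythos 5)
Your reduction of $E_k$ to a dimer event and the observation that $\Pr[E_{k+1}\mid E_k]$ is a ratio of determinants (a Schur complement) is a fine starting point and is consistent with the paper's framework, but neither of your two proposed ways of evaluating that conditional probability works. The ``slide the hole'' step is the gap: in finite volume $\Pr[E_{k+1}\mid E_k]$ is simply the ratio $P_n[\text{path}\supseteq\{e_0,\dots,e_{k+1}\}]/P_n[\text{path}\supseteq\{e_0,\dots,e_k\}]$ of two unconditioned straight-run probabilities (the conditioning on the marked edge cancels), so the statement to be proved is that the ratio of run-probabilities of lengths $m+1$ and $m$ is independent of $m$. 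Translation invariance and the left--right symmetry of the limiting measure cannot relate runs of \emph{different lengths}: sliding the hole along the run only shows that the same tree event can be read off from either end, which yields the tautology $\Pr[E_k]=\Pr[E_k^{\text{reflected}}]$, or, if you track it carefully, identifies $\Pr[E_{k+1}\mid E_k]$ with the probability of one straight step on one side of the marked edge \emph{conditioned on a straight run of length $k$ on the other side} --- not with the unconditional $\Pr[E_1]$. That the trunk's behavior beyond a straight run has the same one-step law as at the start is essentially the theorem itself (the paper calls the geometric-runs property ``surprising'' precisely because no soft symmetry gives it), so this step assumes what is to be proved. Your fallback, that the $k\times k$ matrix is triangular up to Kasteleyn phases, is also unsupported and appears false: already the paper's $3\times3$ example matrix has nonvanishing couplings on both sides of the diagonal between axis sites (e.g.\ the $(2,0),(3,0)$ entry $2\sqrt2-3$), so no triangular collapse of the Schur complement is available without a genuinely new identity.

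What is missing is the mechanism the paper actually uses: absorb the conditioning on $E_k$ into the \emph{region} rather than into the measure. Conditioning on the $k$ forced dimers is the same as deleting the $2k+1$ vertices $(0,0),(1,0),\dots,(2k,0)$, i.e.\ elongating the hole into a slit; the conditional probability of the next trunk edge is then a single diagonal entry of the inverse Kasteleyn matrix of this slit region, which by \eqref{eq:K-inv-from-G} is a diagonal value of the Green's function with Dirichlet conditions at the removed even vertices together with the zipper to the boundary. Deforming the zipper so that it runs just below the negative axis and using the reflection symmetry, this Green's function becomes the one with Dirichlet boundary on the entire half-line $\{(x,0):x\le 2k\}$, and it is the translation invariance of \emph{this half-line slit geometry} (an exactly computed object from \sref{GDsection}, not the conditioned tree measure) that makes the value the same $\sqrt2-1$ for every $k$; an induction on $k$ then finishes the proof. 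So the correct completion of your Schur-complement route is not triangularity but the identification of the Schur complement with the inverse Kasteleyn matrix of the elongated-hole region, i.e.\ with the slit-plane Green's function --- the step your proposal gestures at in its final paragraph but does not supply.
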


\begin{figure}[b!]
\centerline{\begin{tikzpicture}[scale=0.7]
\foreach \x in {-3,...,2} {\draw [gray,thick] (2*\x,-4) -- (2*\x,4);};
\foreach \y in {-2,...,2} {\draw [gray,thick] (-6,2*\y) -- (4,2*\y);};
\foreach \x in {-3,...,2} {
\foreach \y in {-2,...,2} {
  \filldraw [blue!80!green] ({2*\x},{2*\y}) circle[radius=4pt];
}
};
\draw [blue!80!green, line width=6pt] (-2,0) -- (0,0);
\begin{scope}[shift={(-12,0)}]
\foreach \x in {1,...,4} {
  \draw [gray,thick] (\x,-4) -- (\x,4);
  \draw [gray,thick] (-6,\x) -- (4,\x);
  \draw [gray,thick] (-\x-2,-4) -- (-\x-2,4);
  \draw [gray,thick] (-6,-\x) -- (4,-\x);
};
\draw [gray,thick] (-6,0) -- (-3,0);
\draw [gray,thick] (+4,0) -- (+1,0);
\foreach \x in {-2,...,0} {
\draw [gray,thick] (\x,-4) -- (\x,-1);
\draw [gray,thick] (\x,+4) -- (\x,+1);
}
\begin{scope}[every node/.style={scale=0.55}]
\foreach \x in {-6,...,4} {
\foreach \y in {-4,...,4} {
\pgfmathsetmacro\yxdiff{\y-\x}
\ifthenelse {1<0 \OR \isodd{\yxdiff}} {
 \ifthenelse {\y<4 \AND \NOT \(\x>-3\AND \x<1 \AND \y>-2 \AND \y<1\)}  {\node [anchor=west] at (\x,\y+0.5) {$i$};}{}
 \ifthenelse {\y>-4\AND \NOT \(\x>-3\AND \x<1 \AND\y>-1 \AND \y<2\)} {\node at (\x,\y-0.5) {$-i$};}{}
 \ifthenelse {\x>-6\AND \NOT \(\y=0\AND \x>-2\AND \x<2\)} {\node [anchor=south] at (\x-0.5,\y) {$-1$};}{}
 \ifthenelse {\x<4 \AND \NOT \(\y=0\AND \x>-4\AND \x<0\)} {\node [anchor=south] at (\x+0.5,\y) {$1$};}{}
} {}
}
};
\foreach \y in {-4,...,-1} {\node [red, scale=1.0, fill=white, anchor=south] at (\y+0.5,\y) {$+1$};};
\draw [red,thick] (-0.5,-0.5) -- (-0.5,-1);
\foreach \x in {-3,...,-1} {
  \draw [red,thick] (\x+0.5,\x) arc (0:-90:0.5) arc (90:180:0.5);
};
\draw [red,thick] (-3.5,-4) arc (0:-45:0.5);
\foreach \x in {-3,...,-1} {\node [red, scale=1.0, fill=white] at (\x,\x-0.5) {$-i$};};
\end{scope}
\foreach \x in {-3,...,2} {
\foreach \y in {1,...,2} {
  \filldraw [blue!80!green] (2*\x,2*\y) circle[radius=3pt];
  \filldraw [blue!80!green] (2*\x,-2*\y) circle[radius=3pt];
}
};
\foreach \x in {1,...,2} {\filldraw [blue!80!green] (2*\x,0) circle[radius=3pt];};
\foreach \x in {-3,...,-2} {\filldraw [blue!80!green] (2*\x,0) circle[radius=3pt];};
  \node [blue!80!green, scale=0.85] at (-1,0) {$(v_0)$};
\foreach \x in {-3,...,1} {
\foreach \y in {-2,...,1} {
  \filldraw [red] (2*\x+1,2*\y+1) circle[radius=3pt];
}
};
  \node [red,scale=0.85] at (-3.6,-4.6) {$(f_0)$};
\foreach \x in {-6,...,4} {
\foreach \y in {-4,...,4} {
\pgfmathsetmacro\yxdiff{\y-\x}
  \ifthenelse {\isodd{\yxdiff} \AND \NOT \( \y=0 \AND \x=-1\)} {\filldraw [black] (\x,\y) circle[radius=2pt];} {}
}
};
\end{scope}
\end{tikzpicture}}
\caption{\label{longhole}On the left, the Kasteleyn matrix associated to a UST trunk of length $2$. We relate $K^{-1}$
to the Green's function with Dirichlet boundary conditions on the central black segment of the figure on the right.}
\end{figure}
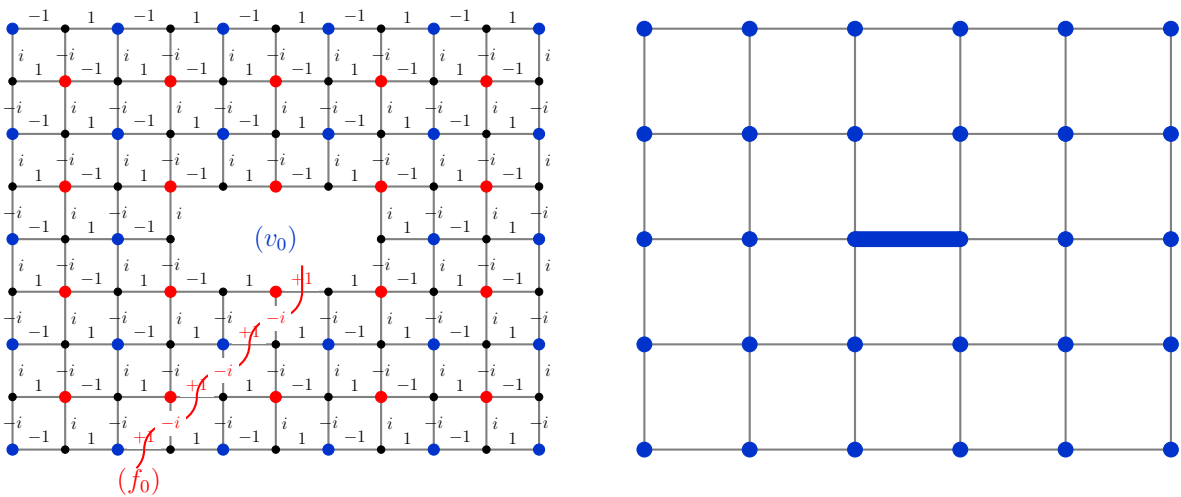

\begin{proof}
We use induction on $k$, with the case $k=0$ being trivial.
In the Temperleyan region associated with the box, we remove the $2k+1$ vertices $(0,0),(1,0),\ldots,(2k,0)$ which correspond to the edges and interior vertices of
a path of length $k+1$, and let $R$ denote the resulting region.
The conditional probability that $P$ also contains the next edge $(2k+1,0)(2k+3,0)$ is the probability that the dimer $(2k+2,0)(2k+3,0)$ is present
in a random dimer covering of~$R$.
To compute this probability, we invert the Kasteleyn matrix with a zipper (illustrated in Figure~\ref{longhole}).  This in turn can be expressed in terms of the Green's
function $G$ on type-$(0,0)$ vertices within the box, with a zipper from the hole to the outer boundary, with Dirichlet boundary at $(0,0),(2,0),\ldots,(2k,0)$,
and in particular it is $G_{(2k+2,0),(2k+2,0)}$.  We may deform the zipper so that it lies just below the negative $x$-axis, and see that $G_{(2k+2,0),v}$ is just the
Green's function with Dirichlet boundary at $\{(x,0)\;:\;x \leq 2k\}$, evaluated at $(2k+2,0)$ and $v$.  With $v=(2k+2,0)$, in the limit where
the boundary of the box tends to infinity, this is $\sqrt{2}-1$.
\end{proof}

There is another argument for the geometric distribution for straight runs in the trunk.
Lawler proved a combinatorial identity \cite[Thm.~3.1]{lawler:pr-edge} which he used to estimate the probability
that an edge lies on the uniform spanning tree path connecting two sides of a box (see also \cite{BLV} for more precise results.)
The focus of these works was scaling limit properties rather than the local measure around the trunk,
but the proof of \cite[Thm.~3.1]{lawler:pr-edge} can be adapted to show that the local measure exists.
The method of proof does not lend itself to explicit computations of probabilities in the trunk measure,
with one exception:
the argument in the proof can be extended to show that
the probability that the trunk continues straight $k$ times is the $k$th power of $G_{(0,0),(0,0)}$ where $G$
the Green's function on $\Z^2$ with Dirichlet boundary conditions at $\{(x,0)\;:\;x<0\}$.
We already have a short proof of this for the square lattice, but the proof of \cite[Thm.~3.1]{lawler:pr-edge} can also be adapted to
work for the triangular lattice, since it has a reflection symmetry and other properties required by the proof.
Rather than give a complete argument here for the geometric runs property for the triangular lattice, which would reproduce large portions of the
proof of \cite[Thm.~3.1]{lawler:pr-edge}, we direct the reader to read \cite[Proof of Thm.~3.1]{lawler:pr-edge} with this new claim in mind.
We will see in Section~\ref{triangularsection} that the relevant constant is $2-\sqrt{3}$.

\section{Near a triple point of the UST}\label{triplesection}

Consider the UST on an $n\times n$ square grid, and consider the
event $A=A(v_t,v_1,v_2,v_3)$ that a nonboundary vertex $v_t$ has three disjoint
branches to three boundary vertices $v_1,v_2,v_3$.
We call such a point $v_t$ a \emph{triple point}.
We orient the edges of the tree towards $v_1$; the edges of the dual tree are naturally oriented towards the outer face.
See Figure~\ref{Temptrick}, with $v_1$ the lower left corner, $v_2$ the upper right corner, and $v_3$ the lower right corner.

We show that the oriented edges and dual edges of this UST conditioned on event $A$ form a determinantal process, with kernel
obtained from the inverse Kasteleyn matrix for an associated graph. The \emph{unoriented\/} tree edges also form a determinantal
process, with a kernel obtained from this one.

In the limit $n\to\infty$ with the distance from $v_t$ to the boundary also going to $\infty$,
the inverse Kasteleyn matrix converges near $v_t$ and we find a limiting determinantal
measure on the UST on $\Z^2$ conditioned to have a tripod point at a given point, for example the origin.

\subsection{Temperley's bijection}

As before we take $\G$ to be an $n\times n$ grid and $\G^+$ to be a $(2n-1)\times(2n-1)$ grid obtained by taking the superposition of $\G$ and its planar dual.
We make another bipartite graph $\H_{NE}$ from $\G^+$  by removing black vertices $f_0,v_1,v_2,v_3$
(where $f_0$ corresponds the outer face of $\G$, and $v_1,v_2,v_3$ are on the boundary of $\G$) and
removing white vertices $w_1,w_2$ corresponding to edges $e_1,e_2$ going $E$ and $N$ respectively from $v_t$.
Let $A^{NE}\subset A$ be the event that the tree has a triple point at $v_t$ and two of the three paths from $v_t$ start in the $N$ and $E$
directions.  Similarly define $A^{NW},A^{SW},A^{SE}$.
Let $A^{NEW}\subset A$ be the event that the tree has a triple point at $v_t$ and the three paths start in the $N$, $E$, and $W$ directions,
and similarly define $A^{SEW},A^{NES},A^{NWS}$.  Let $Z^{NE}$, $Z^{NEW}$, etc., denote the number of spanning trees of the above types.

\begin{figure}[htbp]
\centerline{\includegraphics[width=2in]{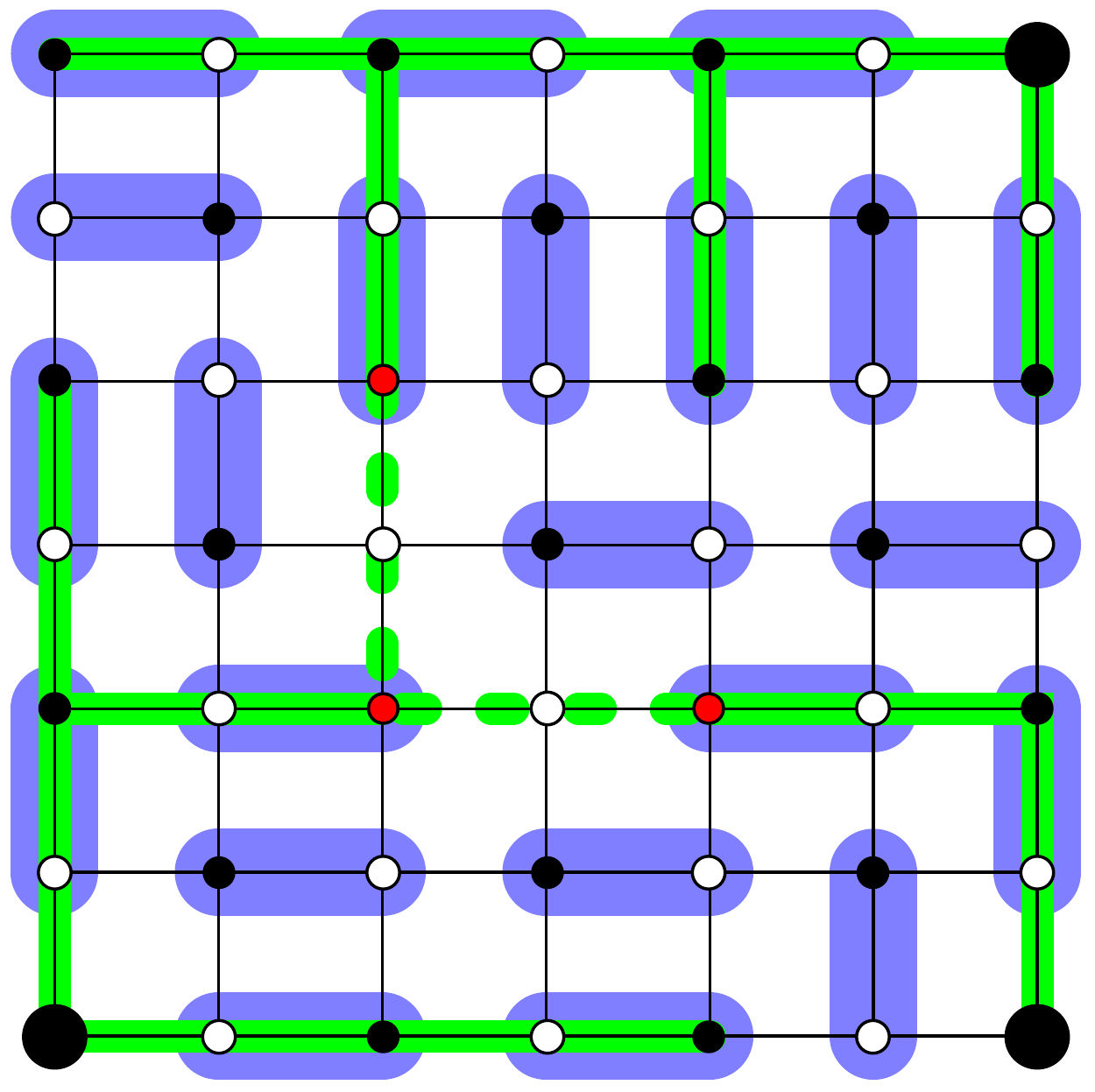}\hspace{-2in}\hspace{-11.0pt}\raisebox{-7.2pt}[0pt][0pt]{\begin{tikzpicture}[scale=0.76]
\node [anchor=north east] at (0,0) {\small$v_1$};
\node [anchor=north west] at (6,0) {\small$v_2$};
\node [anchor=south west] at (6,6) {\small$v_3$};
\contourlength{1.6pt}
\node [scale=0.9] at (2,2) {\contour{black}{\small $v_t$}};
\node [scale=0.9] at (4,2) {\contour{black}{\small $b_1$}};
\node [scale=0.9] at (2,4) {\contour{black}{\small $b_2$}};
\node [scale=0.9] at (3,2) {\contour{black}{\small $w_1$}};
\node [scale=0.9] at (2,3) {\contour{black}{\small $w_2$}};
\contourlength{1.2pt}
\node [scale=0.9] at (2,2) {\contour{gray!50}{\small $v_t$}};
\node [scale=0.9] at (4,2) {\contour{gray!50}{\small $b_1$}};
\node [scale=0.9] at (2,4) {\contour{gray!50}{\small $b_2$}};
\node [scale=0.9] at (3,2) {\contour{white}{\small $w_1$}};
\node [scale=0.9] at (2,3) {\contour{white}{\small $w_2$}};
\end{tikzpicture}}}
\caption{\label{tripodtemp}The tripod version of Temperley's bijection,
with the spanning tree in green and dimer configuration in blue.
Here the boundary vertices $v_1,v_2,v_3$ are at the SW, SE, NE corners,
and the tripod vertex $v_t$ with its east and north neighbors are labeled.}
\end{figure}

\begin{lemma}
Temperley's bijection extends to a bijection between dimer covers of $\H_{NE}$ and spanning trees of $\G$ in $A^{NE}$.
\end{lemma}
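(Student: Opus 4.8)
The plan is to construct the claimed bijection explicitly in both directions, following the classical Temperley bijection (\cite{temperley:tree}, \cite{KPW}) and its single-hole refinement \cite[Lemma~17]{kenyon:det-lap}, but now removing the two white vertices $w_1,w_2$ (both incident to the single vertex $v_t$) in addition to the black vertices $f_0,v_1,v_2,v_3$. The dictionary I will use: a perfect matching $M$ of $\H_{NE}$, restricted to the vertex-vertices and edge-vertices of $\G^+$, assigns to every vertex of $\G$ other than $v_1,v_2,v_3$ a single incident edge (the one whose edge-vertex it is matched to), which I read as an out-edge pointing to the other endpoint; let $F$ be the resulting subgraph of $\G$. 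Symmetrically, the restriction of $M$ to the face-vertices and edge-vertices gives an out-edge at every vertex of $\G^\dagger$ other than $f_0$, a subgraph $F^\dagger$ of $\G^\dagger$. The matching $M$ is determined by the pair $(F,F^\dagger)$, and conversely, so it suffices to identify which pairs arise and match them with the trees in $A^{NE}$.

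\textbf{From dimers to trees.} Given $M$, I first show $F$ is acyclic; this is the crux, handled exactly as in Temperley's bijection. A directed cycle in $F$ would bound a region of $\G$ containing a nonempty set of faces; since $f_0$ is the outer face, none of these is $f_0$, so all of their face-vertices must be matched by $M$, and the standard planarity count (Euler's formula on the enclosed region, using that every edge-vertex on the cycle is already matched on the primal side) makes this impossible. Because $w_1,w_2\notin\H_{NE}$, neither $e_1$ nor $e_2$ lies in $F$; and $F$ is acyclic with $|V(\G)|-3$ edges, hence has exactly three components, each containing one of $v_1,v_2,v_3$ (the only vertices of $F$ with out-degree $0$). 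Likewise, the edge-vertices \emph{not} used on the primal side are those dual to $E\setminus F$, and after deleting the duals of $e_1,e_2$ and the face-vertex $f_0$ they must all be matched to faces; this forces the cotree of $F$, with the duals of $e_1$ and $e_2$ removed, to be a spanning tree of $\G^\dagger$, which (by the standard relation between cycles of $F\cup\{e_i\}$ and bridges of the cotree) holds precisely when $v_t$, $b_1$ and $b_2$ lie in three \emph{distinct} components of $F$. Adding $e_1,e_2$ back then gives a spanning tree $T:=F\cup\{e_1,e_2\}$ of $\G$ in which $v_t$ is the branch point of $v_1,v_2,v_3$, with two of its three branches leaving through $e_1$ ($E$) and $e_2$ ($N$); that is, $T\in A^{NE}$.

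\textbf{From trees to dimers, and mutual inverseness.} Conversely, let $T\in A^{NE}$. Since $v_t$ is the median of $v_1,v_2,v_3$ and its branches through $e_1,e_2$ are disjoint, deleting $e_1,e_2$ from $T$ yields an acyclic spanning subgraph $F$ with three components, one containing each $v_i$ and with $v_t,b_1,b_2$ in distinct components. Orienting each component of $F$ toward its $v_i$ and each component of the cotree of $F$ toward $f_0$ (the cotree minus the duals of $e_1,e_2$ is a spanning tree of $\G^\dagger$ by the count above) produces a pair $(F,F^\dagger)$, hence a perfect matching $M$ of $\H_{NE}$, with $w_1,w_2$ left unused as required. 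The two maps are visibly inverse: the tree-to-dimer map records exactly the out-edges of $F$ and of its cotree, together with the equality $T=F\cup\{e_1,e_2\}$, and the dimer-to-tree map reads these back off; moreover the bijection respects the orientations toward $\{v_1,v_2,v_3\}$ and toward the outer face used in \tref{thm:trunk-measure-dimers}. The genuinely delicate points are the acyclicity of $F$ and the ``three distinct components'' condition; both are the standard planarity inputs behind Temperley's bijection, specialized here to removing the two edge-vertices $w_1,w_2$ at the common vertex $v_t$, and one may alternatively invoke the generalized Temperley correspondence of \cite{KPW} with $\G^+$, black set $\{f_0,v_1,v_2,v_3\}$, and white set $\{w_1,w_2\}$, reading off that the resulting class of spanning forests is exactly $\{T\setminus\{e_1,e_2\}:T\in A^{NE}\}$.
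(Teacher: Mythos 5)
Your overall route is the same as the paper's: run Temperley's correspondence on the out-edges read off the matching, with the whole content of the lemma concentrated in showing that the dimer cover forces $v_t$, $b_1$, $b_2$ into distinct components of $F$ (equivalently, that the three branches are disjoint, equivalently that $F\cup\{e_1,e_2\}$ is a tree). Your setup, the edge/component counting, the reverse direction, and the inverseness check are all fine and agree with the paper's proof. But at the one genuinely delicate point your stated justification does not work: from ``every edge-vertex not used on the primal side must be matched to a face'' you conclude that ``this forces the cotree of $F$, with the duals of $e_1,e_2$ removed, to be a spanning tree of $\G^\dagger$.'' What the matching actually gives you is only that the dual out-edge subgraph $F^\dagger$ consists of one out-edge per bounded face and, by counting, uses exactly the duals of $E\setminus(F\cup\{e_1,e_2\})$, i.e.\ it has $|F(\G)|-1$ edges on $|F(\G)|$ vertices. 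A functional graph with the right edge count need not be acyclic, so ``spanning tree'' does not follow; and since ``cotree is a spanning tree of $\G^\dagger$'' is \emph{equivalent} to the distinct-components statement you are trying to prove, the step as written is circular rather than a proof. Your closing appeal to ``standard planarity inputs'' or to the KPW correspondence does not fill this in either: the statement you need depends on where the removed white vertices sit (two edge-vertices at the common vertex $v_t$), and the paper itself warns in Section~\ref{holesection} that when the removed vertices are not incident the naive Temperley map need not be surjective, so the characterization of which configurations arise is exactly what must be argued.

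The missing ingredient is a second enclosed-region parity argument, symmetric to the one you did give for acyclicity of $F$. This is precisely what the paper's proof supplies, phrased directly on the branches: if two of the branches started at $v_t,b_1,b_2$ were to meet, the region they enclose (closed off across the missing edges $e_1,e_2$) would be forced to be matched within itself, but its black vertices outnumber its white vertices --- by Euler's formula the excess is $1$ plus the number of removed white vertices $w_1,w_2$ lying inside, so the removals only worsen the imbalance --- contradicting the existence of the dimer cover. Equivalently, in your framework: a cycle of $F^\dagger$ would enclose a region in which (vertex-vertices $+$ face-vertices) exceeds (edge-vertices minus any enclosed $w_i$) by at least one, so $F^\dagger$ is acyclic, hence a spanning tree, hence $F\cup\{e_1,e_2\}$ is a spanning tree and $v_t,b_1,b_2$ lie in distinct components. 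With that paragraph inserted your argument is complete and is essentially the paper's; without it, the crux of the lemma is asserted rather than proved.
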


\begin{proof} See Figure~\ref{tripodtemp}.
Let $b_1,b_2,b_3,b_4$ be the E,N,W,S (respectively) neighbors of $v_t\in\G$. When we apply Temperley's bijection to a dimer cover of $\H_{NE}$, the tree branches starting at
$v_t,b_1,b_2$ will necessarily be disjoint and land on $v_1,v_2,v_3$.
Indeed, if any two of these branches meet, the region enclosed between them will have an odd number of vertices of $\H_{NE}$,
contradicting the existence of a dimer cover.

Conversely, given a spanning tree of $\G$ in $A^{NE}$, root the tree at $v_1$ and orient all edges towards $v_1$. Now
change the orientation on the paths between $v_2$ and $v_t$ and between $v_3$ and $v_t$.
When we the convert the tree to a dimer cover as per Temperley's bijection, the dimer cover will leave $w_1,w_2$ uncovered.
\end{proof}

We also make use of another version of Temperley's bijection.
From $\G^+$ remove $f_0,v_1,v_2,v_3$
and contract $w_1,w_2,w_3,w_4,v_t$ to a single white vertex $w_0$, i.e.,
so $w_0$ is attached to all of the neighbors of $w_1,\dots,w_4$ with multiplicity,
and denote the resulting graph $\H_{tr}$ (tr is short for ``tripod'').
We make a Kasteleyn matrix $K_{tr}$ for $\H_{tr}$
by keeping the Kasteleyn signs from the normal Kasteleyn matrix $K_{NE}$ for $\H_{NE}$, and choosing the
signs for $K_{tr}(w_0,b)$ as shown in Figure~\ref{fig:K_tr}.
(Since an even number of vertices are removed near the point of interest in the dimer graphs $\H_{NE}$ and $\H_{tr}$,
there is no zipper of $-1$'s in the Kasteleyn matrix as there was in the trunk calculations.  Consequently it
is the Green's function on $\Z^2$ rather than the branched double-cover of $\Z^2$ that will be relevant
for the tripod calculations.)
\begin{lemma}[\cite{kenyon:jmp}]
Dimer covers of the graph $\H_{tr}$ correspond through Temperley's bijection to spanning trees of
$\G$ in $A(v_t,v_1,v_2,v_3)$.  As a consequence, the edge process in $A(v_t,v_1,v_2,v_3)$ is determinantal
(with kernel determined by the inverse Kasteleyn matrix of $\H_{tr}$).
\end{lemma}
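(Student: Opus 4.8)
The lemma has two halves --- a bijection between dimer covers of $\H_{tr}$ and spanning trees in $A(v_t,v_1,v_2,v_3)$, and a determinantal consequence --- and the plan is to take them in that order. For the bijection the efficient route is to reduce to the variants $\H_{NE}$ and $\H_{SW}$ already handled, rather than to rerun Temperley's argument from scratch. First I would record the elementary fact that $A=A^{NE}\sqcup A^{SW}$: a triple point uses exactly three of the four edges at $v_t$, the omitted one is $N$, $E$, $W$, or $S$; omitting $W$ or $S$ puts the tree in $A^{NE}$, omitting $N$ or $E$ puts it in $A^{SW}$, and both cannot hold since that would force all four edges onto the tripod. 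Next I would show that the dimer covers of $\H_{tr}$ split into two classes according to whether $w_0$ is matched along an edge descended (under the contraction of $v_t,w_1,\dots,w_4$ into $w_0$) from one of $w_1,w_2$ or from one of $w_3,w_4$ --- this is a genuine dichotomy because every $\H_{tr}$-edge at $w_0$ comes from a $\G^+$-edge incident to exactly one $w_i$ --- and that these two classes are in explicit bijection with the dimer covers of $\H_{SW}$ and of $\H_{NE}$ respectively. That bijection is purely local: a cover of $\H_{NE}$ matches $v_t$ to $w_3$ or to $w_4$ and matches the remaining one, say $w_4$, to some neighbour $x\ne v_t$; contracting turns $v_tw_3$ into a deleted loop and $w_4x$ into the dimer $w_0x$, and conversely, since every $\H_{tr}$-edge not incident to $w_0$ avoids $\{v_t,w_1,\dots,w_4\}$ and hence already lies in $\H_{NE}$, an $\H_{tr}$-cover with $w_0$ matched along a $w_3$/$w_4$-edge is carried back to a cover of $\H_{NE}$, the particular parallel edge used at $w_0$ recording which of $w_3,w_4$ carried $x$. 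Composing with the preceding lemma ($\H_{NE}\leftrightarrow A^{NE}$, and its $\H_{SW}\leftrightarrow A^{SW}$ analogue) and with $A=A^{NE}\sqcup A^{SW}$ then gives the asserted bijection, and all intermediate maps agree with Temperley's bijection away from $v_t$.

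For the determinantal consequence I would proceed in three steps. (1) Verify that $K_{tr}$ --- the Kasteleyn signs of $K_{NE}$ together with the signs on the $w_0$-edges prescribed in Figure~\ref{fig:K_tr} --- is a genuine Kasteleyn matrix for $\H_{tr}$: away from $v_t$ Kasteleyn-flatness is inherited from $\G^+$, the deletion of the four black vertices $v_1,v_2,v_3,f_0$ introduces no zipper (an even number of vertices is removed near $v_t$, as the text notes), so only the finitely many faces incident to $w_0$ need checking, and the signs of Figure~\ref{fig:K_tr} are precisely those that make each of them flat. (2) Apply Kenyon's local-statistics formula \cite{Kenyon.localstats}: for a uniform dimer cover of $\H_{tr}$ and edges $e_1,\dots,e_k$ of $\H_{tr}$, the probability that all are present equals $\pm\big(\prod_i (K_{tr})_{e_i}\big)\det\big[(K_{tr}^{-1})_{b(e_i)\,w(e_j)}\big]_{i,j}$, so the dimer-edge process is determinantal with kernel $K_{tr}^{-1}$. (3) Transport through the bijection: a UST conditioned on $A$ becomes a uniform dimer cover of $\H_{tr}$, and under Temperley's bijection ``oriented tree edge $e$ present'' and ``oriented dual-tree edge present'' are each equivalent to the presence of one designated edge of $\H_{tr}$ (the local modification near $v_t$ only changes which designated edge it is), so the oriented tree and dual-tree edge process is a relabelling of a subprocess of the dimer-edge process and hence determinantal with the corresponding submatrix of $K_{tr}^{-1}$.

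The step I expect to be fussiest is the local analysis at the contracted vertex $w_0$: confirming that the splitting of $\H_{tr}$-covers is a genuine bijection onto the covers of $\H_{NE}$ and of $\H_{SW}$ --- surjectivity comes from the fact that edges avoiding $w_0$ already live in both $\H_{NE}$ and $\H_{SW}$, while injectivity and the edge dictionary used in step (3) require careful bookkeeping of the parallel edges at $w_0$. The disjointness of the three branches arising from a dimer cover, and their landing on $v_1,v_2,v_3$, is inherited from the $\H_{NE}$/$\H_{SW}$ lemma (proved there by the parity argument that a region enclosed by two meeting branches would contain an odd number of vertices), so it need not be redone; and checking Kasteleyn-flatness of the faces around $w_0$ for the sign choice of Figure~\ref{fig:K_tr} is routine once those signs are in hand.
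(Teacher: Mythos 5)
Your proposal is correct and rests on the same core observation as the paper's proof: each of the (parallel) edges at $w_0$ remembers which $w_i$ it descends from, so uncontracting the dimer covering $w_0$ reduces a dimer cover of $\H_{tr}$ to one of the directional Temperley bijections; your two-fold split via $\H_{NE}$, $\H_{SW}$ and $A=A^{NE}\sqcup A^{SW}$ is a regrouping of the paper's four-fold classification by the index $i$ of the $w_i$ through which $w_0$ is matched, with the tripod then using the edges $\{w_1,\dots,w_4\}\smallsetminus\{w_i\}$. The extra verifications you include (Kasteleyn-flatness of the faces at $w_0$ and the transport of the local-statistics formula through the bijection) are correct and are left implicit in the paper.
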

\begin{proof}
There are 12 edges (with multiplicity) connecting vertex $w_0$ to the rest of the graph, and each
of these edges was originally connected to one of $w_1,\dots,w_4$ before the contraction.
Say that $w_0$ is paired via an edge originally connecting to $w_i$.  Then in the tree given by
Temperley's bijection, $w_0$ plays the role of $w_i$, and the tree contains a tripod at $v_t$ using the edges $\{w_1,\dots,w_4\}\smallsetminus\{w_i\}$.
\end{proof}

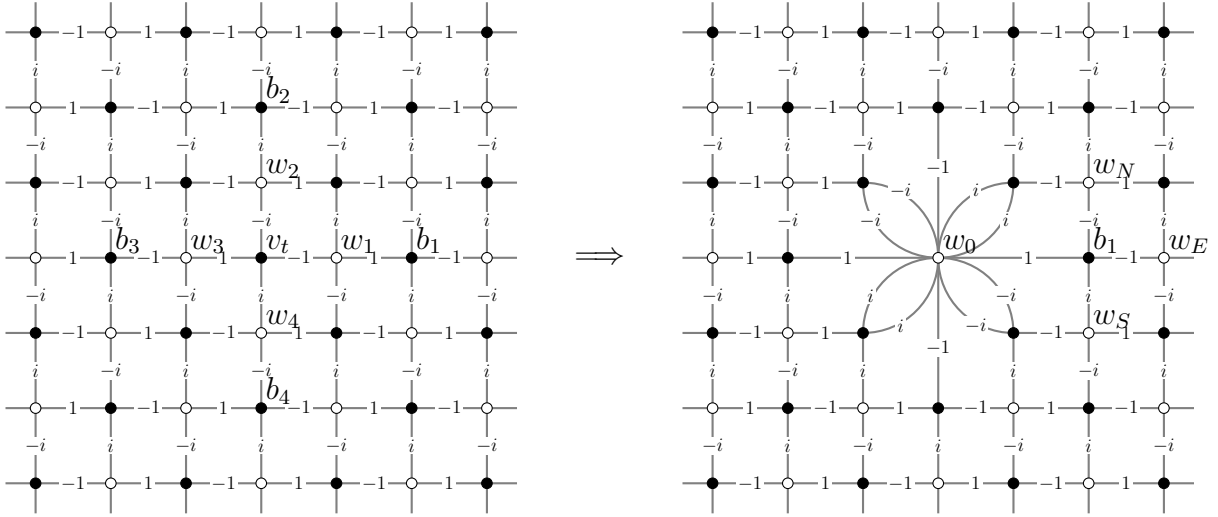
\begin{figure}[t]
\centerline{\begin{tikzpicture}
\newcommand{\htrlim}{3.4}
\begin{scope}
\begin{scope}[every path/.style={gray,thick}]
\foreach \y in {-3,...,3} \draw (-\htrlim,\y) -- (\htrlim,\y);
\foreach \x in {-3,...,3} \draw (\x,-\htrlim) -- (\x,\htrlim);
\end{scope}
\begin{scope}[every node/.style={scale=0.6,fill=white,inner sep=1.2pt}]
\foreach \x in {-3,...,3} {
\foreach \y in {-3,...,3} {
\pgfmathsetmacro\yxdiff{\y-\x}
\ifthenelse {\isodd{\yxdiff}} {
\draw [fill=white] (\x,\y) circle [radius=2pt];
 \ifthenelse {\y<3}  {\node at (\x,\y+0.5) {$i$};}{}
 \ifthenelse {\y>-3} {\node at (\x,\y-0.5) {$-i$};}{}
 \ifthenelse {\x>-3} {\node at (\x-0.5,\y) {$-1$};}{}
 \ifthenelse {\x<3}  {\node at (\x+0.5,\y) {$1$};}{}
} {
\draw [fill=black] (\x,\y) circle [radius=2pt];
}
};
};
\end{scope}
\begin{scope}[every node/.style={anchor = south west, inner sep=1.5pt}]
\node at (0,0) {$v_t$};
\node at (1,0) {$w_1$};
\node at (0,1) {$w_2$};
\node at (-1,0) {$w_3$};
\node at (0,-1) {$w_4$};
\node at (+2,0) {$b_1$};
\node at (-2,0) {$b_3$};
\node at (0,+2) {$b_2$};
\node at (0,-2) {$b_4$};
\end{scope}
\end{scope}
\node at (4.5,0) {$\Longrightarrow$};
\begin{scope}[shift={(9,0)}]
\begin{scope}[every path/.style={gray,thick}]
\foreach \y in {-3,-2,0,2,3} \draw (-\htrlim,\y) -- (\htrlim,\y);
\foreach \x in {-3,-2,0,2,3} \draw (\x,-\htrlim) -- (\x,\htrlim);
\foreach \s in {-1,1} {
\draw (-\htrlim,\s)--(-1,\s);
\draw (+\htrlim,\s)--(+1,\s);
\draw (\s,-\htrlim)--(\s,-1);
\draw (\s,+\htrlim)--(\s,+1);
};
\foreach \t in {0,90,180,270} {
\draw (0,0) arc (\t:{\t+90}:1);
\draw (0,0) arc (\t:{\t-90}:1);
};
\end{scope}
\begin{scope}[every node/.style={scale=0.6,fill=white,inner sep=1.2pt}]
\foreach \x in {-3,...,3} {
\foreach \y in {-3,...,3} {
\pgfmathsetmacro\yxdiff{\y-\x}
\ifthenelse {\(\y<2 \AND \y>-2 \AND \x=0\) \OR \(\x<2 \AND \x>-2 \AND \y=0\)} {} {
\ifthenelse {\isodd{\yxdiff}} {
\draw [fill=white] (\x,\y) circle [radius=2pt];
 \ifthenelse {\y<3}  {\node at (\x,\y+0.5) {$i$};}{}
 \ifthenelse {\y>-3} {\node at (\x,\y-0.5) {$-i$};}{}
 \ifthenelse {\x>-3} {\node at (\x-0.5,\y) {$-1$};}{}
 \ifthenelse {\x<3}  {\node at (\x+0.5,\y) {$1$};}{}
} {
\draw [fill=black] (\x,\y) circle [radius=2pt];
}}
};
};
\draw [fill=white] (0,0) circle [radius=2pt];
\node at (+1.2,0) {$1$};
\node at (-1.2,0) {$1$};
\node at (0,+1.2) {$-1$};
\node at (0,-1.2) {$-1$};
\node at (+.9,+0.5) {$i$};
\node at (+0.5,+.9) {$i$};
\node at (-.9,-0.5) {$i$};
\node at (-0.5,-.9) {$i$};
\node at (-.9,+0.5) {$-i$};
\node at (-0.5,+.9) {$-i$};
\node at (+.9,-0.5) {$-i$};
\node at (+0.5,-.9) {$-i$};
\end{scope}
\begin{scope}[every node/.style={anchor = south west, inner sep=1.5pt}]
\node at (0,0) {$w_0$};
\node at (2,0) {$b_1$};
\node at (3,0) {$w_E$};
\node at (2,1) {$w_N$};
\node at (2,-1) {$w_S$};
\end{scope}
\end{scope}
\end{tikzpicture}}
\caption{Local graph transformation in which $v_t$ and its neighbors
$w_1,w_2,w_3,w_4$ are contracted to a white vertex $w_0$.  The Kasteleyn
weights of the resulting graph stay the same as the original weights, except for edges incident to $w_0$, which are updated as shown.}
\label{fig:K_tr}
\end{figure}

\subsection{Computation}
The Kasteleyn matrix $K$ on $\G^+\setminus\{f_0,v_1\}$ is related to the Green's function on $\G$ as discussed in section~\ref{holesection}.
To obtain the Kasteleyn matrix $K_{NE}$ for $\H_{NE}$ from the Kasteleyn matrix $K$ for $\G^+\setminus\{f_0,v_1\}$, we simply
delete rows $w_1,w_2$ and columns $v_2,v_3$.

Now
\[K_{NE}^{-1}(b,w)  = \frac{\det[K-\{\text{rows $w,w_1,w_2$, columns $b,v_2,v_3$}\}]}
{\det[K-\{\text{rows $w_1,w_2$, columns $v_2,v_3$}\}]},\]
or, using the Jacobi relation between minors of a matrix and minors of its inverse,

\begin{lemma} \label{lem:Kinv_NE}
\be\label{4over3}
K^{-1}_{NE}(b,w) = \frac{\det\begin{pmatrix}K^{-1}(b,w)&K^{-1}(b,w_1)&K^{-1}(b,w_2)\\
K^{-1}(v_2,w)&K^{-1}(v_2,w_1)&K^{-1}(v_2,w_2)\\
K^{-1}(v_3,w)&K^{-1}(v_3,w_1)&K^{-1}(v_3,w_2)
\end{pmatrix}}{\det\begin{pmatrix}K^{-1}(v_2,w_1)&K^{-1}(v_2,w_2)\\
K^{-1}(v_3,w_1)&K^{-1}(v_3,w_2)\end{pmatrix}}.\ee
The denominator in \eqref{4over3} is the probability that a random tree in the graph has a tripod at $v_t$ connecting $v_1,v_2,v_3$
and in which two of the three initial directions are north and east, i.e., it is $\Pr[A^{NE}]$.
\end{lemma}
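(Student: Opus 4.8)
The plan is to obtain \eqref{4over3} from Cramer's rule together with the Jacobi identity relating the minors of a matrix to the complementary minors of its inverse, and then to identify the denominator by passing to dimer partition functions via Temperley's bijection.

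First I would record how $K_{NE}$ sits inside $K$: the matrix $K$ has rows indexed by the edge-type white vertices $E$ of $\G^{+}$ and columns indexed by the black vertices $(V\cup F)\setminus\{f_0,v_1\}$, and $K_{NE}$ is $K$ with rows $w_1,w_2$ and columns $v_2,v_3$ deleted. By Cramer's rule, $K_{NE}^{-1}(b,w)$ equals, up to a sign depending only on the positions of $w$ and $b$, the minor of $K_{NE}$ obtained by deleting row $w$ and column $b$, divided by $\det K_{NE}$. The former minor is precisely the minor of $K$ with rows $w,w_1,w_2$ and columns $b,v_2,v_3$ deleted, and $\det K_{NE}$ is the minor of $K$ with rows $w_1,w_2$ and columns $v_2,v_3$ deleted --- this is the display immediately preceding the lemma. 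Now I would invoke Jacobi's theorem: for an invertible matrix $M$ with inverse $N$ and equal-size index sets $\alpha,\beta$, one has $\det N[\alpha,\beta] = \pm \det M[\beta^{c},\alpha^{c}]/\det M$, with the sign depending only on $\alpha$ and $\beta$. Applying this with $M=K$, $N=K^{-1}$, once with $(\alpha,\beta)=(\{b,v_2,v_3\},\{w,w_1,w_2\})$ and once with $(\alpha,\beta)=(\{v_2,v_3\},\{w_1,w_2\})$, the common factor $\det K$ cancels in the ratio, which turns the ratio of minors of $K$ into exactly the $3\times3$-over-$2\times2$ ratio of entries of $K^{-1}$ in \eqref{4over3}.

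I expect the one genuinely fiddly point to be the signs: the $(-1)$ from Cramer's rule and the two sign prefactors from Jacobi's theorem must multiply to $+1$. Since each of these depends only on the positions of the struck rows and columns, this is a finite bookkeeping computation, and it can be cross-checked by evaluating both sides on a single explicit configuration. For the last assertion, I would argue as follows. By the weighting of \sref{holesection} (and because $v_1$ is a corner vertex, hence incident to the outer face $f_0$, so no zipper is needed), $K$ is a genuine Kasteleyn matrix for $\G^{+}\setminus\{f_0,v_1\}$ and $K_{NE}$ a genuine Kasteleyn matrix for $\H_{NE}$; thus $|\det K|$ and $|\det K_{NE}|$ count perfect matchings of those graphs. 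By Jacobi's theorem the $2\times2$ determinant in the denominator of \eqref{4over3} equals $\pm\det K_{NE}/\det K$, hence in absolute value it is the ratio of the number of dimer covers of $\H_{NE}$ to that of $\G^{+}\setminus\{f_0,v_1\}$. The standard Temperley bijection identifies the latter count with the total number $Z$ of spanning trees of $\G$, while the preceding lemma identifies the former with the number $Z^{NE}$ of spanning trees in $A^{NE}$. Hence the denominator equals, up to an overall sign, $Z^{NE}/Z=\Pr[A^{NE}]$, and that sign is $+1$ since the quantity is a probability (equivalently, since the Kasteleyn weights can be normalized to make all partition functions positive). All of this is routine once the sign bookkeeping is settled.
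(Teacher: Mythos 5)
Your proposal is correct and follows essentially the same route as the paper: the paper derives \eqref{4over3} from the displayed Cramer's-rule expression for $K_{NE}^{-1}(b,w)$ as a ratio of minors of $K$, converted via the Jacobi relation between minors of a matrix and minors of its inverse, and identifies the denominator with $\Pr[A^{NE}]$ through the Kasteleyn/Temperley correspondence exactly as you do. Your treatment of the sign bookkeeping and of the positivity of the denominator is no less complete than the paper's own (tersely stated) argument.
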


\subsection{Limit}
For simplicity we will take a specific limit as $n\to\infty$; for the general case, one can use the estimates
for $K^{-1}$ for a general Temperleyan region in \cite{Kenyon.confinv}.
As $n$ increases we center the $n\times n$ box $\G_n$ on the middle of its lower boundary, so that $\G_n$ converges to the upper half plane.
We also assume that the size of $\G_n$ grows faster than the distances between the $v_i$ and $v_t$, and that these distances grow faster
than the distances $|b-v_t|$ and $|w-v_t|$:
let $m=\sqrt{n}$, and suppose
$v_1,v_2,v_3$ are on the lower boundary of $\G$
and converge when rescaled by $1/m$ to distinct points $z_1,z_2,z_3\in\R$.
Suppose further that $v_t$ when rescaled by $1/m$ converges to a point $u\in\C$ with $\Im(u)>0$.
(So $b,w,w_1,w_2$ when rescaled by $1/m$ all converge to $u$.)
Then for $n$ large, the Green's function $G(b,v_t)$ approximates the Neumann Green's function
for the upper half plane, and in particular
\[G(v_i,v_t) = \frac{\log{m}}{\pi} + \frac1{2\pi}\log[(z_i-u)(z_i-\bar u)] + O(1/m^2)\]
\cite{Stohr}.
Upon taking discrete derivatives we find
\[K^{-1}(v_j,w_1) = -\frac1{\pi m}\left(\frac1{z_j-u} + \frac{1}{z_j-\bar u}\right)+O(1/m^2)\]
and
\[K^{-1}(v_j,w_2) = -\frac{i}{\pi m}\left(\frac1{z_j-u} - \frac{1}{z_j-\bar u}\right)+O(1/m^2).\]
So the denominator of \eqref{4over3} is
\be\label{nonzerodet}\frac{i}{\pi^2 m^2}\det\begin{pmatrix}
\frac1{z_2-u} + \frac{1}{z_2-\bar u}&\frac1{z_2-u} - \frac{1}{z_2-\bar u}\\
\frac1{z_3-u} + \frac{1}{z_3-\bar u}&\frac1{z_3-u} - \frac{1}{z_3-\bar u}
\end{pmatrix} +O(1/m^3) =
\frac{i}{\pi^2 m^2}\frac{2(z_2-z_3)(u-\bar u)}{|(z_2-u)(z_3-u)|^2} + O(1/m^3).\ee

If $w$ is horizontal, then
the last two rows of the matrix in the numerator of \eqref{4over3}, when multiplied by $m$,
have the form
\[\begin{pmatrix}A+O(|w-w_1|/m)&A&C\\B+O(|w-w_1|/m)&B&D\end{pmatrix}\]
where $AD-BC$ is bounded away from $0$ by equation \eqref{nonzerodet}.
Under this condition on $AD-BC$, we have the identity
\[\frac{\det\begin{pmatrix}a&b&c\\A+O(|w-w_1|/m)&A&C\\B+O(|w-w_1|/m)&B&D\end{pmatrix}}{\det\begin{pmatrix}A&C\\B&D\end{pmatrix}} = a-b+O(c|w-w_1|/m) + O(b|w-w_1|/m) = a-b+o(1),\]
so \eqref{4over3} tends to $K^{-1}(b,w) -K^{-1}(b,w_1)$ as $n\to\infty$.

Similarly if $w$ is vertical then
the ratio \eqref{4over3} takes the form
\[\frac{\det\begin{pmatrix}a&b&c\\C+O(|w-w_2|/m)&A&C\\D+O(|w-w_2|/m)&B&D\end{pmatrix}}{\det\begin{pmatrix}A&C\\B&D\end{pmatrix}} = a-c+o(1).\]

Thus
\begin{equation} \label{eq:K_NE^{-1}}
K^{-1}_{NE}(b,w) \to \begin{cases}K^{-1}(b,w) -K^{-1}(b,w_1) & \text{if $w$ is a horizontal edge,} \\ K^{-1}(b,w) -K^{-1}(b,w_2) & \text{if $w$ is a vertical edge.} \end{cases}
\end{equation}

Similar considerations apply when the initial directions of two of the tripod edges from $v_t$
are any two adjacent directions among $N,E,S,W$.

If we wish to consider all possible sets of directions we need $K^{-1}_{tr}$.
To compute $K^{-1}_{tr}(b,w)$ we can use Cramer's rule and express it in terms of the values that we computed already.
The denominator is $\det K_{tr} = Z^{NEW}+Z^{SEW}+Z^{NES}+Z^{NWS}$, which we also denote as $Z_{tr}$.  The numerator can be expressed as a sum of four determinants (according to how $w_0$ is matched)
in which vertices $b$ and $w$ are removed.  We can denote the first such determinant by $Z^{NEW}_{b,w}$, which is a signed sum over dimer configurations with monomers
at $b$ and $w$.
Let $\sigma=\pm 1$ depending on whether or not row $w$ and column $b$ have the same parity position in $K_{tr}$.
(Here we assume that the white vertices $w_0,w_1,w_2,w_3,w_4$ are listed last and that black vertex $v_t$ is listed last.)
Then for $w\neq w_0$,
\begin{align*}
K^{-1}_{tr}(b,w)
 &= \sigma \frac{Z^{NEW}_{b,w}+Z^{SEW}_{b,w}+Z^{NES}_{b,w}+Z^{NWS}_{b,w}}{Z_{tr}} \\
 &= \sigma \frac{Z^{NE}_{b,w}+Z^{NW}_{b,w}+Z^{SW}_{b,w}+Z^{SE}_{b,w}}{2 Z_{tr}} \\
 &= \frac{Z^{NE} K^{-1}_{NE}(b,w) + Z^{NW} K^{-1}_{NW}(b,w) + Z^{SW} K^{-1}_{SW}(b,w) + Z^{SE} K^{-1}_{SE}(b,w)}{2 Z_{tr}}\,.
\end{align*}
Recall that the denominator in \eqref{4over3} gives $\Pr(A^{NE})=Z^{NE}/\text{(total number of trees)}$.
We obtain the same limiting expression for each of $\Pr(A^{NE})$, $\Pr(A^{NW})$, $\Pr(A^{SW})$, $\Pr(A^{SE})$,
so the coefficients $Z^{NE}$, $Z^{NW}$, $Z^{SW}$, $Z^{SE}$ are equal up to factors of $1+o(1)$.  Since $Z^{NE}+Z^{NW}+Z^{SW}+Z^{SE} = 2Z_{tr}$,
\[ K^{-1}_{tr}(b,w) = \frac{K^{-1}_{NE}(b,w) + K^{-1}_{NW}(b,w) + K^{-1}_{SW}(b,w) + K^{-1}_{SE}(b,w)}{4} + o(1)\,, \]
and hence in the limit
\begin{equation}
K^{-1}_{tr}(b,w) \to K^{-1}(b,w) -
 \begin{cases}\frac12(K^{-1}(b,w_1)+K^{-1}(b,w_3))&\text{if $w$ is horizontal}\\
\frac12(K^{-1}(b,w_2)+K^{-1}(b,w_4))&\text{if $w$ is vertical.}\end{cases}
\end{equation}

We can solve for the remaining case $w=w_0$ using the fact that $K_{tr} K^{-1}_{tr}$ is the identity.
Let $b_1$ be the black vertex east of $w_1$ (before $w_1$ was contracted) and let $w_S$, $w_E$, and $w_N$
be the white neighbors to the south, east, and north of $b_1$ (see Figure~\ref{fig:K_tr}).  Then
\begin{align}
K^{-1}_{tr}(b,w_0) &= K^{-1}_{tr}(b,w_E) + i K^{-1}_{tr}(b,w_N) - i K^{-1}_{tr}(b,w_S) + \delta_{b,b_1} \notag\\
&\to K^{-1}(b,w_E) + i K^{-1}(b,w_N) - i K^{-1}(b,w_S) - \tfrac12 (K^{-1}(b,w_1)+K^{-1}(b,w_3)) + \delta_{b,b_1} \notag\\
&= K^{-1}(b,w_1) - \tfrac12 (K^{-1}(b,w_1)+K^{-1}(b,w_3)) \notag\\
&= \tfrac{1}{2} (K^{-1}(b,w_1) - K^{-1}(b,w_3)) = \tfrac{1}{2} i (K^{-1}(b,w_4) - K^{-1}(b,w_2))\,.
\end{align}

For large boxes ($n\to\infty$) the values of $K^{-1}(b,w)$ and $K^{-1}(b,w_i)$ converge to the differences of
Green's functions on $\Z^2$ \cite{Kenyon.confinv} (see equation~\eqref{eq:K-inv-from-G} and
Figure~\ref{Green}).

\subsection{Examples}
\begin{figure}[b!]
\centerline{\includegraphics[width=4in]{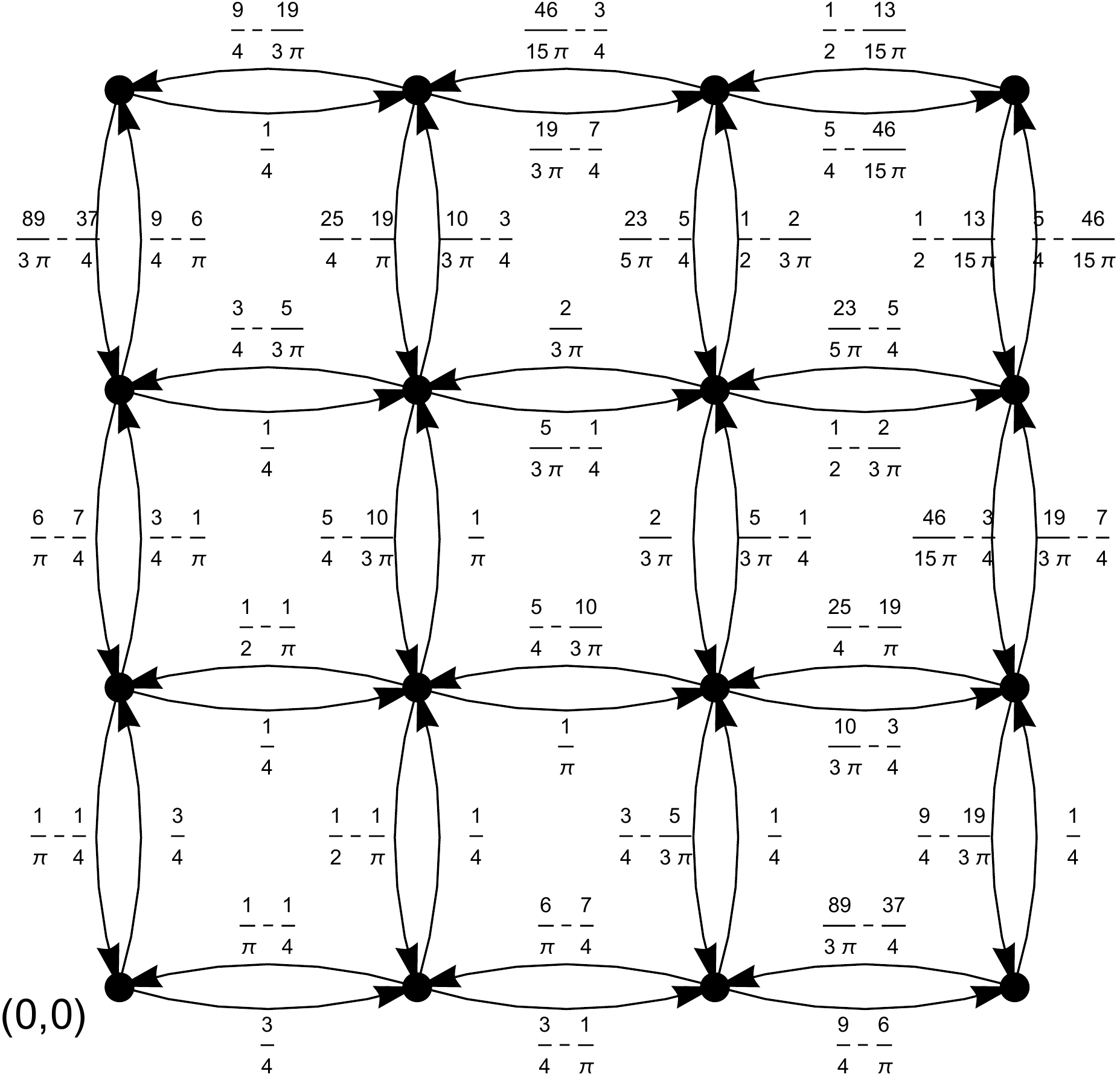}}
\caption{\label{tripodprobs}Directed edge probabilities for the UST on $\Z^2$ conditioned on having a tripod point at the origin, where each edge is directed towards $\infty$, and the three branches of the tripod are directed away from the tripod point.  These probabilities are computed using $K_{tr}^{-1}$.  In this figure, the origin is at the lower left.}
\end{figure}

For a spanning tree with a tripod vertex at $v_t=(0,0)$, the probability that it has (directed) edge $e=(-1,0)(0,0)$ given that it is partially directed NE
is $K_{NE}(w,b) K_{NE}^{-1}(b,w)$, where $b=(-2,0), w=(-1,0)$.  Here $K_{NE}(w,b)=1$, and $K_{NE}^{-1}(b,w)$ is given
by the horizontal case of \eqref{eq:K_NE^{-1}} (recall that $w_1=(1,0)$).  The inverse Kasteleyn matrix values converge to
\begin{align*}
 K^{-1}(b,w) &\to -G_{\Z^2}((1,0))+G_{\Z^2}((0,0)) = \tfrac{1}{4} \\
 K^{-1}(b,w_1) &\to -G_{\Z^2}((2,0))+G_{\Z^2}((1,0)) = (1-\tfrac{2}{\pi}) - (\tfrac{1}{4})\,,
\end{align*}
so the conditional probability that the tree has edge $e$ is $\frac{2}{\pi}-\frac{1}{2}$.
The probability that the tripod vertex $(0,0)$ has (directed) edge $e=(-1,0)(0,0)$, not conditioning on the NE direction,
is then $\frac{1}{\pi}-\frac14.$

The probability that a tripod vertex at $(0,0)$ has degree $4$ is $4$ times the probability of directed edge $e$, given a tripod at $(0,0)$,
which is $\frac{4}{\pi}-1$.
Thus the expected degree of the tripod vertex is thus $2+\frac{4}{\pi}$.

See Figure~\ref{tripodprobs} for other directed edge values. Directed dual edge probabilities can be similarly computed.

\section{Triangular lattice}\label{triangularsection}

Here we do similar calculations for the triangular lattice.
(Additional ideas are required to get the full spanning tree trunk measure; however we
do prove the geometric runs property.)
While the values of the Green's function on the triangular
lattice are in $\Q\oplus\Q \sqrt{3}/\pi$,
the values of the antisymmetric Green's function for the triangular lattice
on the branched double-cover of the plane turn out to be in $\Q\oplus\Q\sqrt{3}$
(see Figure~\ref{fig:Green-triangular}).

\begin{figure}[t]
\renewcommand{\rsbox}[1]{\restrictboxwidth{$#1$}{34pt}}
\centerline{\begin{tikzpicture}
\begin{axis}[xmin=-4.7,xmax=4.7,ymin=-.5,ymax=4.2,x={(1.5cm,0)}, y={(0,1.5cm)}, axis lines=none]
\foreach \j in {-9,...,9} {\edef\temp{\noexpand\addplot[domain=0:4.2,gray!30!white,thick]({\j-x},1.732*x);\noexpand\addplot[domain=0:4.2,gray!30!white,thick]({\j+x},1.732*x);}\temp} ;
\foreach \j in {0,...,5} {\edef\temp{\noexpand\addplot[domain=-4.7:4.7,gray!30!white,thick](x,{1.732/2*\j});}\temp};
\rnode{0,0}{2{-}\sqrt{3}}
\rnode{-1.00000,0}{0}
\rnode{-0.5,0.866025}{\frac{3 \sqrt{3}}{2}{-}\frac{5}{2}}
\rnode{0.5,0.866025}{1{-}\frac{\sqrt{3}}{2}}
\rnode{1.00000,0}{14{-}8 \sqrt{3}}
\rnode{-1.5,0.866025}{\frac{7}{2}{-}2 \sqrt{3}}
\rnode{0,1.73205}{9 \sqrt{3}{-}\frac{31}{2}}
\rnode{1.5,0.866025}{\frac{73 \sqrt{3}}{4}{-}\frac{63}{2}}
\rnode{-2.00000,0}{0}
\rnode{-1.00000,1.73205}{\frac{7 \sqrt{3}}{2}{-}6}
\rnode{1.00000,1.73205}{\frac{79}{2}{-}\frac{91 \sqrt{3}}{4}}
\rnode{2.00000,0}{143{-}\frac{165 \sqrt{3}}{2}}
\rnode{-2.5,0.866025}{\frac{7}{4}{-}\sqrt{3}}
\rnode{-2.00000,1.73205}{\frac{111}{4}{-}16 \sqrt{3}}
\rnode{-0.5,2.59808}{\frac{85}{2}{-}\frac{49 \sqrt{3}}{2}}
\rnode{0.5,2.59808}{\frac{387 \sqrt{3}}{4}{-}\frac{335}{2}}
\rnode{2.00000,1.73205}{9 \sqrt{3}{-}\frac{31}{2}}
\rnode{2.5,0.866025}{\frac{857 \sqrt{3}}{4}{-}371}
\rnode{-3.00000,0}{0}
\rnode{-1.5,2.59808}{53 \sqrt{3}{-}\frac{367}{4}}
\rnode{1.5,2.59808}{466{-}269 \sqrt{3}}
\rnode{3.00000,0}{1649{-}952 \sqrt{3}}
\rnode{-3.00000,1.73205}{\frac{67 \sqrt{3}}{2}{-}58}
\rnode{0,3.46410}{398{-}\frac{919 \sqrt{3}}{4}}
\rnode{3.00000,1.73205}{\frac{6443}{8}{-}\frac{7439 \sqrt{3}}{16}}
\rnode{-3.5,0.866025}{\frac{149}{4}{-}\frac{43 \sqrt{3}}{2}}
\rnode{-2.5,2.59808}{317{-}183 \sqrt{3}}
\rnode{-1.00000,3.46410}{\frac{551}{4}{-}\frac{159 \sqrt{3}}{2}}
\rnode{1.00000,3.46410}{\frac{2235 \sqrt{3}}{2}{-}\frac{3871}{2}}
\rnode{2.5,2.59808}{\frac{9251 \sqrt{3}}{16}{-}\frac{8011}{8}}
\rnode{3.5,0.866025}{\frac{44123 \sqrt{3}}{16}{-}\frac{38211}{8}}
\rnode{-4.00000,0}{0}
\rnode{-2.00000,3.46410}{\frac{1235 \sqrt{3}}{2}{-}\frac{2139}{2}}
\rnode{2.00000,3.46410}{\frac{47011}{8}{-}\frac{54283 \sqrt{3}}{16}}
\rnode{4.00000,0}{\frac{80183}{4}{-}\frac{92587 \sqrt{3}}{8}}
\rnode{-4.00000,1.73205}{\frac{33 \sqrt{3}}{2}{-}\frac{457}{16}}
\rnode{-3.5,2.59808}{406 \sqrt{3}{-}\frac{11251}{16}}
\rnode{3.5,2.59808}{\frac{3191}{8}{-}\frac{921 \sqrt{3}}{4}}
\rnode{4.00000,1.73205}{\frac{84781}{8}{-}\frac{12237 \sqrt{3}}{2}}
\rnode{-4.5,0.866025}{\frac{4933}{16}{-}178 \sqrt{3}}
\rnode{-3.00000,3.46410}{\frac{60747}{16}{-}2192 \sqrt{3}}
\rnode{3.00000,3.46410}{7817 \sqrt{3}{-}\frac{108315}{8}}
\rnode{-4.5,2.59808}{\frac{2977}{2}{-}\frac{6875 \sqrt{3}}{8}}
\rnode{-4.00000,3.46410}{\frac{44963 \sqrt{3}}{8}{-}\frac{38939}{4}}
\rnode{4.00000,3.46410}{\frac{453335}{16}{-}\frac{523465 \sqrt{3}}{32}}
\old{
\rnode{-4,0}{0}
\rnode{-3,0}{0}
\rnode{-2,0}{0}
\rnode{-1,0}{0}
\rnode{0,0}{2-\sqrt{3}}
\rnode{1,0}{14-8\sqrt{3}}
\rnode{2,0}{143-\frac{165\sqrt{3}}{2}}
\rnode{-2.5,.866}{\frac74-\sqrt{3}}
\rnode{-1.5,.866}{\frac72-2\sqrt3}
\rnode{-.5,.866}{-\frac52+\frac{3\sqrt3}{2}}
\rnode{.5,.866}{1-\frac{\sqrt3}{2}}
\rnode{1.5,.866}{-\frac{63}{2}+\frac{73\sqrt3}{4}}
}
\end{axis}
\end{tikzpicture}}
\caption{Triangular lattice Green's function $G((0,0),\cdot)$ branched around an edge incident to the origin.}
\label{fig:Green-triangular}
\end{figure}

\subsection{Green's function}

For the triangular lattice we let $(x,y)$ denote the vertex at position $x + y e^{2\pi i/3}$.
The Green's function $G((x,y))$ on the triangular grid has the formula
\[ -G((x,y))= \frac1{4\pi^2}\oint\!\!\!\oint \frac{1-z^xw^y}{6-z-1/z-w-1/w-zw-1/zw} \,\frac{dw}{iw}\,\frac{dz}{iz}.\]

We evaluate this along the $x$ axis as follows. First perform the contour integral over $w$ to obtain
\[G((x,0)) = \frac1{2\pi i}\oint \frac{1-z^x}{(1-z)\sqrt{1-14z+z^2}}\,dz.\]
Here the choice of branch of the square root is determined so that $G((x,0))\le 0$.

Taking differences we have
\begin{align*}
 G((x,0))-G((x+1,0)) &= \frac1{2\pi i}\oint \frac{z^x}{\sqrt{1-14z+z^2}}\,dz \\
\intertext{and substituting $z=e^{i\theta}$}
&= \frac1{2\pi i}\int_0^{2\pi} \frac{e^{i(x+1/2)\theta}}{\sqrt{14-2\cos\theta}}\,d\theta
\intertext{and using the $\theta$-$(2\pi-\theta)$ symmetry the imaginary part cancels}
&= \frac1{2\pi}\int_0^{2\pi} \frac{\sin[(x+1/2)\theta]}{\sqrt{14-2\cos\theta}}\,d\theta
\end{align*}

We collect these terms into a generating function
\begin{align*}
 \delta_+(u) &= \sum_{x=0}^\infty u^{2x+1} [G((x,0))-G((x+1,0))]
\end{align*}
and since
\[\sum_{x\ge0} u^{2x+1} \sin((x+\tfrac12)\theta)= u\frac{(1+u^2)\sin(\theta/2)}{1+u^4-2u^2\cos\theta},\]
we arrive at
\[\delta_+(u) = \sum_{x\ge0} (G((x,0))-G(x+1,0))u^{x+1/2}
 = -\frac1{2\pi}\int_0^{2\pi} \frac{u(1+u^2)\sin(\theta/2)}{(1+u^4-2u^2\cos\theta)\sqrt{14-2\cos\theta}} \,d\theta
\]
which has an explicit integral
\[\delta_+(u) = u \frac{\cot^{-1}\left(\frac{\sqrt{3}(1+u^2)}{\sqrt{1-14u^2+u^4}}\right)}{\pi \sqrt{1-14u^2+u^4}}.\]

The first few values are
\[\delta_+(u) =  \frac{1}6 u + \left(\frac76-\frac{2\sqrt{3}}{\pi}\right) u^{3} + \left(\frac{73}6-\frac{22\sqrt{3}}{\pi}\right) u^{5} +\cdots.\]

Note that $\delta_+(z)$ converges for $|z|\leq 1$ except at $z=\pm 1$,
and that since $\cot^{-1}$ is odd, we make same choice of sign both square roots.
The choice of $\cot^{-1}$ is the one which gives $\pi/6$ when $u=0$, so when $u\to\pm i$, the $\cot^{-1}$ converges to $\pi/2$,
and $\delta_+(u)$ converges to $\pm i/8$.
When $u=e^{i\theta}$ we have
\[\frac{\sqrt{3}(1+u^2)}{\sqrt{u^4-14u^2+1}} =
-i \frac{2\sqrt{3}\cos(\theta)}{\sqrt{14-2\cos(2\theta)}}\]
which is pure imaginary with modulus less than $1$.
Using $\cot^{-1}(-ix) = \frac{i}{2}\log(\frac{x+1}{x-1})$, we have
\begin{align*}
\delta_+(u) &= \frac{i\log(\frac{x+1}{x-1})}{2\pi\sqrt{u^2-14+1/u^2}} \\
 &= \frac{\pm \pi i + \log(\frac{1+x}{1-x})}{2\pi\sqrt{14-2\cos(2\theta)}}
\end{align*}
where the sign is given by the sign of $\Im u$.

For $u\in S^1$, adding the negative powers of $u$ corresponds to taking $2i\times$ the imaginary part.
Thus
\[
\delta(e^{i\theta}) = \delta_-(e^{i\theta})+\delta_+(e^{i\theta}) = \frac{i}{\sqrt{14-2\cos\theta}} \times \sign\sin\theta
\]

\subsection{Triangular slit plane}

Let $D=\{(k,0)\;:\;k<0\}$ be the vertices on the negative real axis.
As in the case of the square grid we compute the Green's function $G_D$ for the triangular
grid with Dirichlet boundary on $D$.
As in that case, we need to find $C(u)$ analytic outside the disk so that
$C(u) \delta(u)\times u$ is analytic in the unit disk.
Notice that $14-u^2-1/u^2$ has roots at $\pm2\pm\sqrt{3}$, so we let $\alpha=2-\sqrt{3}$, and factor
\[
14-u^2-1/u^2 = (1-\alpha^2 u^2)(1-\alpha^2 / u^2)/\alpha^2
\]
which allows us to guess the generating functions
\[C_*(u) = \sqrt{(1-1/u^2)(1-\alpha^2/u^2)}\quad\quad\text{and}\quad\quad D_*(u) = -\frac{\alpha}{u}\sqrt{\frac{1-u^2}{1-\alpha^2 u^2}}\,.\]
Since $|\alpha|<1$, $u D_*(u)$ is analytic inside the unit disk and $C_*(u)$ is analytic outside the unit disk.
The power series coefficients for $\sqrt{1-u^2}$ are all negative except for the constant term, and they sum to $0$,
so the coefficients are absolutely summable.  The coefficients for $1/\sqrt{1-\alpha^2 u^2}$ are also absolutely summable,
since the function is analytic in a larger disk.  Thus the series for $D_*(u)$ converges absolutely when $|u|=1$.
Similarly, the series for $C_*(u)$ converges absolutely when $|u|=1$.

Taking the ratio
\[
\frac{D_*(u)}{C_*(u)} = \pm \frac{i}{(1-\alpha^2/u^2)(1-\alpha^2 u^2)/\alpha^2} = \pm \frac{i}{\sqrt{14-u^2-1/u^2}}
\]
gives $\pm\delta(u)$ when $|u|=1$.  By moving $u$ from $\infty$ to $\pm i$ on the imaginary axis we see $\arg C_*(\pm i) = 0$,
and similarly $\arg D_*(\pm i) = \pm \pi/2$, so in fact
\[
C_*(u) \delta(u) = D_*(u)
\]
As in the square lattice case, we deduce that $C(u)=C_*(u)$ and $D(u)=D_*(u)$.
Thus the voltage generating function $V(u) = -u/(1-u^2) D_*(u)$ is given by
\[
V(u) = \alpha\frac{1}{\sqrt{(1-u^2)(1-\alpha^2 u^2)}}
 = (2-\sqrt{3}) + (14-8\sqrt{3})u^2 + \left(143-\frac{165\sqrt{3}}2\right)u^4+ \cdots.\]

\subsection{Branching around a face versus an edge}

We obtained the values of the Green's function along the real axis.
We can use harmonicity to obtain the remaining values if there is another
line along which we know already know the values.
One way to obtain the values along another line is to deform the zipper
so that to obtain the Green's function $\tilde G$ for the double-cover branched around a face
instead of an edge, and then use the symmetry of $\tilde G$.

\begin{figure}[t]
\newcommand{\rt}{1.7320508075688772935}
\centerline{\begin{tikzpicture}
\begin{scope}
\begin{axis}[xmin=-4.7,xmax=4.7,ymin=-4.2,ymax=4.2,x={(.7cm,0)}, y={(0,.7cm)}, axis lines=none]
\foreach \j in {-9,...,9} {\edef\temp{\noexpand\addplot[domain=-4.2:4.2,gray!30!white,thick]({\j-x},\rt*x);\noexpand\addplot[domain=-4.2:4.2,gray!30!white,thick]({\j+x},\rt*x);}\temp} ;
\foreach \j in {-5,...,-1} {\edef\temp{\noexpand\addplot[domain=-4.7:4.7,gray!30!white,thick](x,{\rt/2*\j});}\temp};
\foreach \j in {1,...,5} {\edef\temp{\noexpand\addplot[domain=-4.7:4.7,gray!30!white,thick](x,{\rt/2*\j});}\temp};
\addplot[domain=-4.7:-1,gray!30!white,thick](x,0);
\addplot[domain=0:4.7,gray!30!white,thick](x,0);
\end{axis}
\begin{scope}[scale=.7,shift={(4.7,4.2)}]
\draw [gray!30!white,thick](0,0) arc (60:120:1);
\draw [gray!30!white,thick](0,0) arc (-60:-120:1);
\draw [red,thick] (-0.5,0) -- (-.5,{-\rt/8}) arc (0:-90:{\rt/8}) -- (-4.7,{-\rt/4});
\draw [fill=black] (0,0) circle (.5ex);
\end{scope}
\end{scope}
\begin{scope}[shift={(10,0)}]
\begin{axis}[xmin=-4.7,xmax=4.7,ymin=-4.2,ymax=4.2,x={(.7cm,0)}, y={(0,.7cm)}, axis lines=none]
\foreach \j in {-9,...,9} {\edef\temp{\noexpand\addplot[domain=-4.2:4.2,gray!30!white,thick]({\j-x},\rt*x);\noexpand\addplot[domain=-4.2:4.2,gray!30!white,thick]({\j+x},\rt*x);}\temp} ;
\foreach \j in {-5,...,5} {\edef\temp{\noexpand\addplot[domain=-4.7:4.7,gray!30!white,thick](x,{\rt/2*\j});}\temp};
\end{axis}
\begin{scope}[scale=.7,shift={(4.7,4.2)}]
\draw [red,thick] (-0.5,{-\rt/4})-- (-4.7,{-\rt/4});
\draw [fill=black] (0,0) circle (.5ex);
\end{scope}
\end{scope}
\end{tikzpicture}}
\caption{Triangular lattice with the zipper originating in the middle of a split edge from $0$ to $-1$ (left), and with the zipper modified so that it originates from the triangle below that edge.}
\label{fig:Green-triangular-edge}
\end{figure}
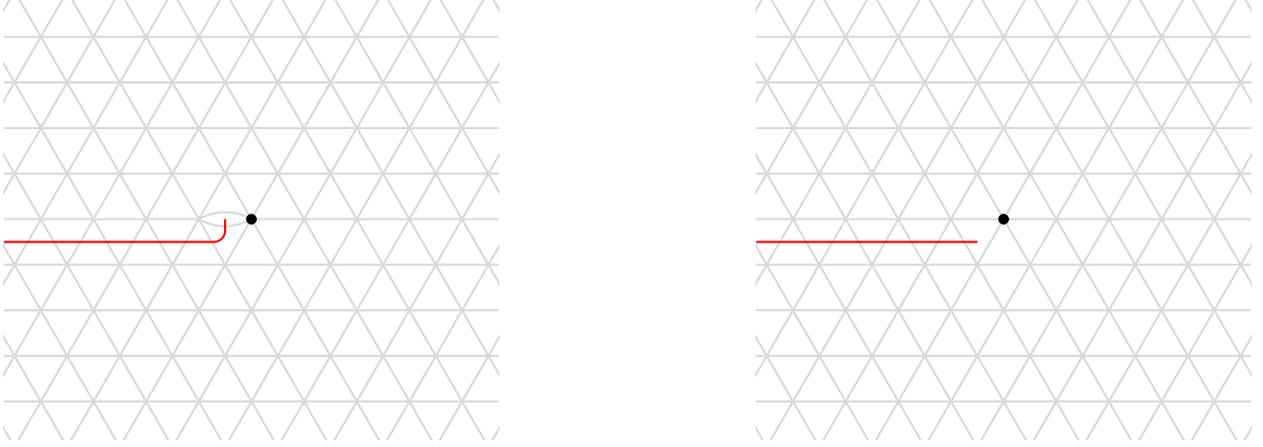

Suppose we split the edge from $(-1,0)(0,0)$ into two edges
with half the conductance, with a zipper originating in the face between these
two edges, and proceeding left along the negative real axis but below the vertices
on the axis.  Let $G$ be the resulting Green's function.
If we deform the zipper to run above the axis, then the values at the
on-axis vertices get negated, and then symmetry implies that the voltages along
the negative real axis are $0$.  Thus $G$ coincides with the function that
we were working on computing in the previous section.

Here for notational convenience we identify a vertex $v$ with its position $x + y e^{2\pi i/3}$ (with $x,y\in\Z$)
in the complex plane using the natural embedding.

We move the endpoint of the zipper to the triangle below the edge(s) $(-1,0)(0,0)$.
\[
  \tilde G(u,v) = G(u,v) + a G(p,v) + b G(q,v)
\]
where $p=(0,0)$ and $q=(-1,0)$.  To find $a$ and $b$, we
use the formulas from earlier with $u=p$, but since we are moving the zipper across
a split edge with only half the conductance, the $2$'s in the formula are dropped.
We have already $G_{p,p}=2-\sqrt{3}$ and $G_{p,q}=0$.  By deforming the
(original edge-branched) zipper to run below the positive real axis, we see that
\[
G_{q,v}=G_{p,-\bar v-1} \times \begin{cases}1 & \Im v\geq 0 \\ -1 & \Im v < 0\,.\end{cases}
\]
In particular $G_{q,q}=G_{p,p}$ (and of course $G_{q,p}=G_{p,q}$).
Solving
\begin{align*}
a + b G_{q,q} &= 0 \\
b + (1+a) G_{p,p} &= 0
\end{align*}
we find $a = -1/2 + 1/\sqrt{3}$ and $b=-1/(2\sqrt{3})$, so
\[
\tilde G(0,v) = \left(\frac{1}{2}+\frac{1}{\sqrt{3}}\right)G_D(0,v) - \frac{1}{2\sqrt{3}} G_D(0,-\bar v-1)\times\begin{cases}1 & \Im v\geq 0 \\ -1 & \Im v<0\end{cases}
\]

For purposes of symmetry, we can deform the zipper $30^\circ$ counterclockwise, so that $0$ and the center of the triangle containing the zipper's start
are on the line of the zipper.

Then by construction and by symmetry,
\begin{equation} \label{eq:G-triangular-branched-face}
\begin{aligned}
\sum_{x\geq 0} \tilde G(x,0) u^x = \sum_{y\geq 0} \tilde G(0,y) u^y &= \frac{1}{\sqrt{12 (1-u)(1-\alpha u)}} \\
\sum_{x<0} \tilde G(x,0) u^x = \sum_{y<0} \tilde G(0,y) u^y &= \frac{1/\sqrt{3}-1/2}{u\sqrt{(1-1/u)(1-\alpha/u)}}
\end{aligned}
\end{equation}
Observe also that for $k\geq 1$, $\tilde G(-k,k) = \tilde G(-k,k-1)$.

Equation~\eqref{eq:G-triangular-branched-face} gives two lines along which we know $\tilde G(0,v)$.
Given $\tilde G$ on these two lines, the Laplacian relation determines $\tilde G(0,v)$ everywhere else (see Figure~\ref{fig:Green-triangular-face}).
We can then deform the zipper again to obtain the Green's function $G(0,v)$ with the zipper originating in the middle of an edge (Figure~\ref{fig:Green-triangular-edge}),
or move the zipper to evaluate either $G(u,v)$ or $\tilde G(u,v)$ at arbitrary pairs of vertices.

\begin{figure}
\centerline{\begin{tikzpicture}
\begin{axis}[xmin=-4.7,xmax=4.7,ymin=-4.2,ymax=4.2,x={(1.5cm,0)}, y={(0,1.5cm)}, axis lines=none]
\foreach \j in {-9,...,9} {\edef\temp{\noexpand\addplot[domain=-4.2:4.2,gray!30!white,thick]({\j-x},1.732*x);\noexpand\addplot[domain=-4.2:4.2,gray!30!white,thick]({\j+x},1.732*x);}\temp} ;
\foreach \j in {-5,...,5} {\edef\temp{\noexpand\addplot[domain=-4.7:4.7,gray!30!white,thick](x,{1.732/2*\j});}\temp};
\addplot[domain=-0.5:-4.7,red,thick](x,{x/1.732-0.2});
\rnode{0,0}{\frac{1}{2 \sqrt{3}}}
\rnode{-1.00000,0}{\frac{1}{\sqrt{3}}{-}\frac{1}{2}}
\rnode{-0.5,-0.866025}{\frac{1}{\sqrt{3}}{-}\frac{1}{2}}
\rnode{-0.5,0.866025}{1{-}\frac{\sqrt{3}}{2}}
\rnode{0.5,-0.866025}{1{-}\frac{\sqrt{3}}{2}}
\rnode{0.5,0.866025}{\frac{2}{\sqrt{3}}{-}1}
\rnode{1.00000,0}{\frac{2}{\sqrt{3}}{-}1}
\rnode{-1.5,-0.866025}{0}
\rnode{-1.5,0.866025}{\frac{1}{\sqrt{3}}{-}\frac{1}{2}}
\rnode{0,-1.73205}{\frac{1}{\sqrt{3}}{-}\frac{1}{2}}
\rnode{0,1.73205}{3{-}\frac{5}{\sqrt{3}}}
\rnode{1.5,-0.866025}{3{-}\frac{5}{\sqrt{3}}}
\rnode{1.5,0.866025}{2{-}\frac{13}{4 \sqrt{3}}}
\rnode{-2.00000,0}{\frac{7}{\sqrt{3}}{-}4}
\rnode{-1.00000,-1.73205}{\frac{7}{\sqrt{3}}{-}4}
\rnode{-1.00000,1.73205}{5{-}\frac{17}{2 \sqrt{3}}}
\rnode{1.00000,-1.73205}{5{-}\frac{17}{2 \sqrt{3}}}
\rnode{1.00000,1.73205}{\frac{77}{4 \sqrt{3}}{-}11}
\rnode{2.00000,0}{\frac{77}{4 \sqrt{3}}{-}11}
\rnode{-2.5,-0.866025}{\frac{37}{4}{-}\frac{16}{\sqrt{3}}}
\rnode{-2.5,0.866025}{9{-}\frac{31}{2 \sqrt{3}}}
\rnode{-2.00000,-1.73205}{\frac{37}{4}{-}\frac{16}{\sqrt{3}}}
\rnode{-2.00000,1.73205}{\frac{47}{2 \sqrt{3}}{-}\frac{27}{2}}
\rnode{-0.5,-2.59808}{9{-}\frac{31}{2 \sqrt{3}}}
\rnode{-0.5,2.59808}{9 \sqrt{3}{-}\frac{31}{2}}
\rnode{0.5,-2.59808}{\frac{47}{2 \sqrt{3}}{-}\frac{27}{2}}
\rnode{0.5,2.59808}{\frac{79}{2}{-}\frac{91 \sqrt{3}}{4}}
\rnode{2.00000,-1.73205}{9 \sqrt{3}{-}\frac{31}{2}}
\rnode{2.00000,1.73205}{18{-}\frac{31}{\sqrt{3}}}
\rnode{2.5,-0.866025}{\frac{79}{2}{-}\frac{91 \sqrt{3}}{4}}
\rnode{2.5,0.866025}{18{-}\frac{31}{\sqrt{3}}}
\rnode{-3.00000,0}{\frac{143}{2 \sqrt{3}}{-}\frac{165}{4}}
\rnode{-1.5,-2.59808}{\frac{143}{2 \sqrt{3}}{-}\frac{165}{4}}
\rnode{-1.5,2.59808}{\frac{111}{2}{-}32 \sqrt{3}}
\rnode{1.5,-2.59808}{\frac{111}{2}{-}32 \sqrt{3}}
\rnode{1.5,2.59808}{\frac{221}{\sqrt{3}}{-}\frac{255}{2}}
\rnode{3.00000,0}{\frac{221}{\sqrt{3}}{-}\frac{255}{2}}
\rnode{-3.00000,-1.73205}{0}
\rnode{-3.00000,1.73205}{9{-}\frac{31}{2 \sqrt{3}}}
\rnode{0,-3.46410}{9{-}\frac{31}{2 \sqrt{3}}}
\rnode{0,3.46410}{\frac{163 \sqrt{3}}{4}{-}\frac{141}{2}}
\rnode{3.00000,-1.73205}{\frac{163 \sqrt{3}}{4}{-}\frac{141}{2}}
\rnode{3.00000,1.73205}{\frac{1499}{16 \sqrt{3}}{-}54}
\rnode{-3.5,-0.866025}{110{-}\frac{127 \sqrt{3}}{2}}
\rnode{-3.5,0.866025}{\frac{417}{4}{-}\frac{361}{2 \sqrt{3}}}
\rnode{-2.5,-2.59808}{110{-}\frac{127 \sqrt{3}}{2}}
\rnode{-2.5,2.59808}{\frac{551}{2 \sqrt{3}}{-}159}
\rnode{-1.00000,-3.46410}{\frac{417}{4}{-}\frac{361}{2 \sqrt{3}}}
\rnode{-1.00000,3.46410}{\frac{145 \sqrt{3}}{2}{-}\frac{251}{2}}
\rnode{1.00000,-3.46410}{\frac{551}{2 \sqrt{3}}{-}159}
\rnode{1.00000,3.46410}{\frac{917}{2}{-}\frac{794}{\sqrt{3}}}
\rnode{2.5,-2.59808}{\frac{145 \sqrt{3}}{2}{-}\frac{251}{2}}
\rnode{2.5,2.59808}{\frac{561}{2}{-}\frac{7771}{16 \sqrt{3}}}
\rnode{3.5,-0.866025}{\frac{917}{2}{-}\frac{794}{\sqrt{3}}}
\rnode{3.5,0.866025}{\frac{561}{2}{-}\frac{7771}{16 \sqrt{3}}}
\rnode{-4.00000,0}{\frac{1649}{2 \sqrt{3}}{-}476}
\rnode{-2.00000,-3.46410}{\frac{1649}{2 \sqrt{3}}{-}476}
\rnode{-2.00000,3.46410}{\frac{1283}{2}{-}\frac{1111}{\sqrt{3}}}
\rnode{2.00000,-3.46410}{\frac{1283}{2}{-}\frac{1111}{\sqrt{3}}}
\rnode{2.00000,3.46410}{\frac{42971}{16 \sqrt{3}}{-}\frac{3101}{2}}
\rnode{4.00000,0}{\frac{42971}{16 \sqrt{3}}{-}\frac{3101}{2}}
\rnode{-4.00000,-1.73205}{\frac{271 \sqrt{3}}{2}{-}\frac{3755}{16}}
\rnode{-4.00000,1.73205}{\frac{3191}{8 \sqrt{3}}{-}\frac{921}{4}}
\rnode{-3.5,-2.59808}{\frac{271 \sqrt{3}}{2}{-}\frac{3755}{16}}
\rnode{-3.5,2.59808}{\frac{687}{2}{-}\frac{4759}{8 \sqrt{3}}}
\rnode{3.5,-2.59808}{\frac{3191}{8}{-}\frac{921 \sqrt{3}}{4}}
\rnode{3.5,2.59808}{\frac{3191}{4 \sqrt{3}}{-}\frac{921}{2}}
\rnode{4.00000,-1.73205}{\frac{2042}{\sqrt{3}}{-}\frac{9431}{8}}
\rnode{4.00000,1.73205}{\frac{3191}{4 \sqrt{3}}{-}\frac{921}{2}}
\rnode{-4.5,-0.866025}{\frac{22443}{16}{-}\frac{4859}{2 \sqrt{3}}}
\rnode{-4.5,0.866025}{1355{-}\frac{18775}{8 \sqrt{3}}}
\rnode{-3.00000,-3.46410}{\frac{22443}{16}{-}\frac{4859}{2 \sqrt{3}}}
\rnode{-3.00000,3.46410}{\frac{27575}{8 \sqrt{3}}{-}1990}
\rnode{3.00000,-3.46410}{\frac{6107}{2 \sqrt{3}}{-}\frac{14103}{8}}
\rnode{3.00000,3.46410}{\frac{7715}{2}{-}\frac{26725}{4 \sqrt{3}}}
\rnode{4.5,-0.866025}{\frac{45111}{8}{-}\frac{52089 \sqrt{3}}{16}}
\rnode{4.5,0.866025}{\frac{7715}{2}{-}\frac{26725}{4 \sqrt{3}}}
\rnode{5.00000,0}{\frac{134483}{4 \sqrt{3}}{-}\frac{155287}{8}}
\rnode{-4.5,-2.59808}{0}
\rnode{-4.5,2.59808}{\frac{3191}{8 \sqrt{3}}{-}\frac{921}{4}}
\rnode{4.5,-2.59808}{\frac{15813}{8}{-}\frac{6847}{2 \sqrt{3}}}
\rnode{4.5,2.59808}{1551{-}\frac{85961}{32 \sqrt{3}}}
\rnode{-4.00000,-3.46410}{\frac{10931}{2 \sqrt{3}}{-}\frac{6311}{2}}
\rnode{-4.00000,3.46410}{\frac{9323}{2}{-}\frac{64591}{8 \sqrt{3}}}
\rnode{4.00000,-3.46410}{\frac{28435}{8}{-}\frac{24625}{4 \sqrt{3}}}
\rnode{4.00000,3.46410}{\frac{439945}{32 \sqrt{3}}{-}\frac{15875}{2}}
\rnode{5.00000,-1.73205}{\frac{428875}{16 \sqrt{3}}{-}\frac{123805}{8}}
\rnode{5.00000,1.73205}{\frac{439945}{32 \sqrt{3}}{-}\frac{15875}{2}}
\rnode{5.00000,-3.46410}{\frac{13051 \sqrt{3}}{2}{-}\frac{180839}{16}}
\rnode{5.00000,3.46410}{13051{-}\frac{180839}{8 \sqrt{3}}}
\end{axis}
\end{tikzpicture}}
\caption{Green's function $\tilde G(0,v)$ on the triangular lattice with a zipper originating in a triangle incident to the origin.}
\label{fig:Green-triangular-face}
\end{figure}
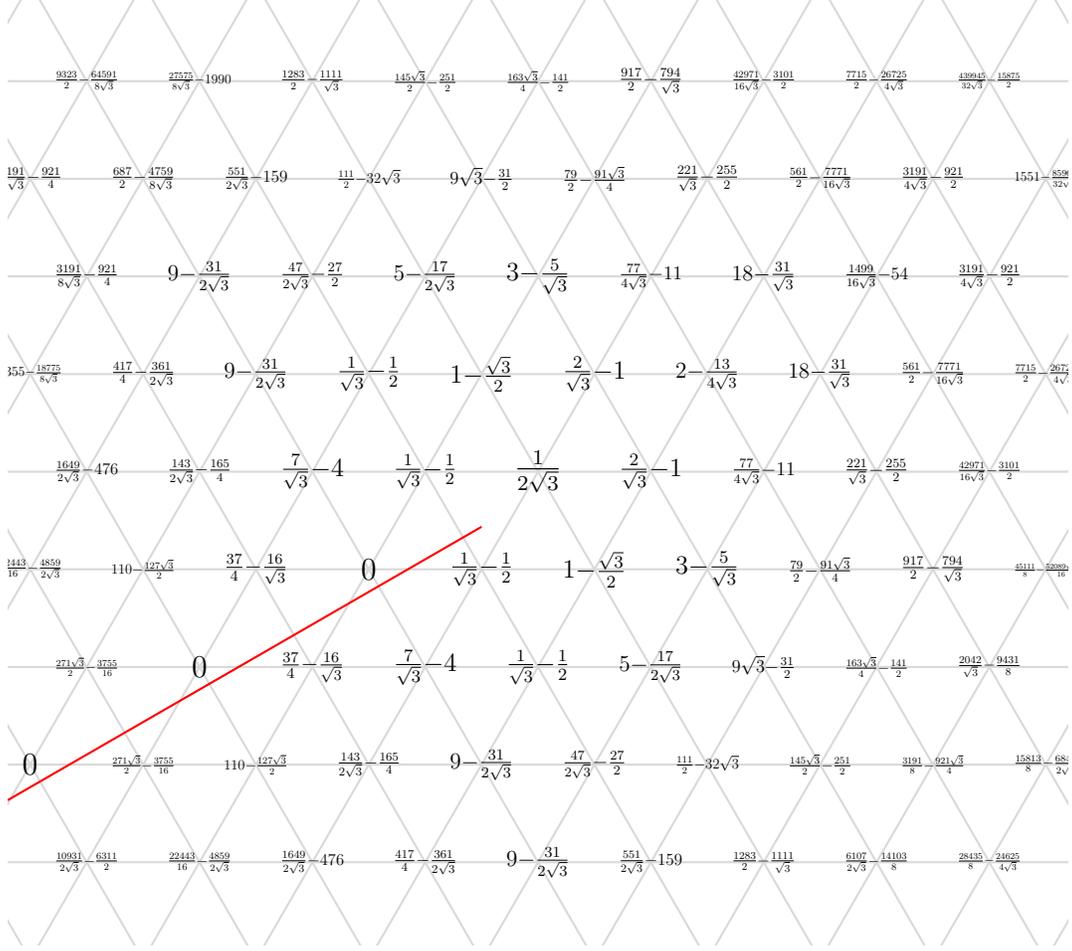

\phantomsection
\pdfbookmark[1]{References}{bib}
\bibliographystyle{hmralphaabbrv}
\bibliography{zipper}

\begin{thebibliography}{BLPS01}

\bibitem[BLPS01]{BLPS}
I.~Benjamini, R.~Lyons, Y.~Peres, and O.~Schramm.
\newblock Uniform spanning forests.
\newblock \href{http://doi.org/10.1214/aop/1008956321}{{\em Ann.\ Probab.}
  29(1):1--65}, 2001. \MR{1825141 (2003a:60015)}

\bibitem[BLV16]{BLV}
C.~Bene\v{s}, G.~F. Lawler, and F.~Viklund.
\newblock Scaling limit of the loop-erased random walk {G}reen's function.
\newblock \href{http://doi.org/10.1007/s00440-015-0655-3}{{\em Probab.\ Theory
  Related Fields} 166(1-2):271--319}, 2016.
\newblock \arXiv{1402.7345}. \MR{3547740}

\bibitem[BM01]{BMS2}
M.~Bousquet-M{\'e}lou.
\newblock Walks on the slit plane: other approaches.
\newblock \href{http://doi.org/10.1006/aama.2001.0734}{{\em Adv.\ in Appl.\
  Math.} 27(2-3):243--288}, 2001.
\newblock Special issue in honor of Dominique Foata's 65th birthday.
  \arxiv{math/0104111}. \MR{1868965 (2002j:60076)}

\bibitem[BMS02]{BMS1}
M.~Bousquet-M{\'e}lou and G.~Schaeffer.
\newblock Walks on the slit plane.
\newblock \href{http://doi.org/10.1007/s004400200205}{{\em Probab.\ Theory
  Related Fields} 124(3):305--344}, 2002.
\newblock \arxiv{math/0012230}. \MR{1939650 (2003h:60013)}

\bibitem[BP93]{BP}
R.~Burton and R.~Pemantle.
\newblock Local characteristics, entropy and limit theorems for spanning trees
  and domino tilings via transfer-impedances.
\newblock \href{http://doi.org/10.1214/aop/1176989121}{{\em Ann.\ Probab.}
  21(3):1329--1371}, 1993. \MR{1235419 (94m:60019)}

\bibitem[Bud17]{Budd}
T.~Budd.
\newblock Winding of simple walks on the square lattice, 2017.
\newblock \arXiv{1709.04042}.

\bibitem[CCK17]{CCK:ising}
D.~Chelkak, D.~Cimasoni, and A.~Kassel.
\newblock Revisiting the combinatorics of the 2{D} {I}sing model.
\newblock \href{http://doi.org/10.4171/AIHPD/42}{{\em Ann.\ Inst.\ Henri
  Poincar\'e D} 4(3):309--385}, 2017.
\newblock \arXiv{1507.08242}. \MR{3713019}

\bibitem[FS63]{Fisher-Stephenson}
M.~E. Fisher and J.~Stephenson.
\newblock Statistical mechanics of dimers on a plane lattice. {II}. {D}imer
  correlations and monomers.
\newblock \href{http://doi.org/10.1103/physrev.132.1411}{{\em Phys.\ Rev.\ (2)}
  132:1411--1431}, 1963. \MR{0158705}

\bibitem[Ken97]{Kenyon.localstats}
R.~Kenyon.
\newblock Local statistics of lattice dimers.
\newblock \href{http://doi.org/10.1016/S0246-0203(97)80106-9}{{\em Ann.\ Inst.\
  H. Poincar\'e Probab.\ Statist.} 33(5):591--618}, 1997.
\newblock \arxiv{math/0105054}. \MR{1473567 (99b:82039)}

\bibitem[Ken00a]{kenyon:det-lap}
R.~Kenyon.
\newblock The asymptotic determinant of the discrete {L}aplacian.
\newblock \href{http://doi.org/10.1007/BF02392811}{{\em Acta Math.}
  185(2):239--286}, 2000.
\newblock \arxiv{math-ph/0011042}. \MR{1819995}

\bibitem[Ken00b]{Kenyon.confinv}
R.~Kenyon.
\newblock Conformal invariance of domino tiling.
\newblock \href{http://doi.org/10.1214/aop/1019160260}{{\em Ann.\ Probab.}
  28(2):759--795}, 2000.
\newblock \arxiv{math-ph/9910002}. \MR{1782431 (2002e:52022)}

\bibitem[Ken00c]{kenyon:jmp}
R.~Kenyon.
\newblock Long-range properties of spanning trees.
\newblock \href{http://doi.org/10.1063/1.533190}{{\em J. Math.\ Phys.}
  41(3):1338--1363}, 2000.
\newblock Probabilistic techniques in equilibrium and nonequilibrium
  statistical physics. \MR{1757962}

\bibitem[Ken09]{Kenyon.lecturenotes}
R.~Kenyon.
\newblock Lectures on dimers.
\newblock In \href{http://doi.org/10.1090/pcms/016/04}{{\em Statistical
  mechanics}, IAS/Park City Math.\ Ser.\ \#16, pages 191--230}. Amer.\ Math.\
  Soc., 2009. \MR{2523460}

\bibitem[KPW00]{KPW}
R.~W. Kenyon, J.~G. Propp, and D.~B. Wilson.
\newblock Trees and matchings.
\newblock
  \href{http://www.combinatorics.org/Volume_7/Abstracts/v7i1r25.html}{{\em
  Electron.\ J. Combin.} 7:Research Paper 25, 34 pp.}, 2000. \MR{1756162}

\bibitem[KW15]{KW4}
R.~W. Kenyon and D.~B. Wilson.
\newblock Spanning trees of graphs on surfaces and the intensity of loop-erased
  random walk on planar graphs.
\newblock \href{http://doi.org/10.1090/S0894-0347-2014-00819-5}{{\em J. Amer.\
  Math.\ Soc.} 28(4):985--1030}, 2015.
\newblock \arXiv{1107.3377}. \MR{3369907}

\bibitem[Law14]{lawler:pr-edge}
G.~F. Lawler.
\newblock The probability that planar loop-erased random walk uses a given
  edge.
\newblock \href{http://doi.org/10.1214/ECP.v19-2908}{{\em Electron.\ Commun.\
  Probab.} 19:no.~51, 13}, 2014.
\newblock \arXiv{1301.5331}. \MR{3246970}

\bibitem[Law18]{lawler:trunk-measure}
G.~F. Lawler.
\newblock The infinite two-sided loop-erased random walk, 2018.
\newblock \arXiv{1802.06667}.

\bibitem[LL10]{Lawler-Limic:book}
G.~F. Lawler and V.~Limic.
\newblock \href{http://doi.org/10.1017/CBO9780511750854}{{\em Random walk: a
  modern introduction}}.
\newblock Cambridge Studies in Advanced Mathematics \#123. Cambridge University
  Press, 2010. \MR{2677157}

\bibitem[LS03]{MR2030095}
R.~Lyons and J.~E. Steif.
\newblock Stationary determinantal processes: phase multiplicity,
  {B}ernoullicity, entropy, and domination.
\newblock \href{http://doi.org/10.1215/S0012-7094-03-12032-3}{{\em Duke Math.\
  J.} 120(3):515--575}, 2003. \MR{2030095}

\bibitem[MW40]{mccrea-whipple}
W.~H. McCrea and F.~J.~W. Whipple.
\newblock Random paths in two and three dimensions.
\newblock \href{http://doi.org/10.1017/S0370164600020265}{{\em Proc.\ Roy.\
  Soc.\ Edinburgh} 60:281--298}, 1940. \MR{0002733 (2,107f)}

\bibitem[Pem91]{Pemantle}
R.~Pemantle.
\newblock Choosing a spanning tree for the integer lattice uniformly.
\newblock \href{http://doi.org/10.1214/aop/1176990223}{{\em Ann.\ Probab.}
  19(4):1559--1574}, 1991. \MR{1127715 (92g:60014)}

\bibitem[Sch00]{Schramm:SLE}
O.~Schramm.
\newblock Scaling limits of loop-erased random walks and uniform spanning
  trees.
\newblock \href{http://doi.org/10.1007/BF02803524}{{\em Israel J. Math.}
  118:221--288}, 2000. \MR{1776084}

\bibitem[Spi76]{Spitzer}
F.~Spitzer.
\newblock \href{http://doi.org/10.1007/978-1-4757-4229-9}{{\em Principles of
  random walk}}.
\newblock Springer-Verlag, second edition, 1976.
\newblock Graduate Texts in Mathematics \#34. \MR{0388547 (52 \#9383)}

\bibitem[St{\"o}50]{Stohr}
A.~St{\"o}hr.
\newblock \"{U}ber einige lineare partielle {D}ifferenzengleichungen mit
  konstanten {K}oeffizienten. {III}. {Z}weites {B}eispiel: {D}er {O}perator
  {$\nabla\Phi(y_1,y_2)=\Phi(y_1{+}1, y_2)+\Phi(y_1{-}1, y_2)+\Phi(y_1,
  y_2{+}1)+\Phi(y_1,y_2{-}1)-4\Phi(y_1, y_2)$}.
\newblock \href{http://doi.org/10.1002/mana.19490030603}{{\em Math.\ Nachr.}
  3:330--357}, 1950. \MR{0040555 (12,711d)}

\bibitem[Tem74]{temperley:tree}
H.~N.~V. Temperley.
\newblock In \href{http://doi.org/10.1017/CBO9780511662072.030}{{\em
  Combinatorics: Proceedings of the British Combinatorial Conference 1973},
  pages 202--204}, 1974.
\newblock London Mathematical Society Lecture Notes Series \#13.

\end{thebibliography}

\end{document}